\definecolor{dblue}{rgb}{0,0,.6}
\newtheorem{theorem}{Theorem}[section]
\theoremstyle{plain}
\newtheorem{corollary}[theorem]{Corollary}
\newtheorem{example}[theorem]{Example}
\newtheorem{lemma}[theorem]{Lemma}
\newtheorem{proposition}[theorem]{Proposition}
\newtheorem{remark}[theorem]{Remark}
\newcommand{\del}{\partial}
\newcommand{\Z}{\mathbb Z}
\newcommand{\Q}{\mathbb Q}
\newcommand{\C}{\mathbb C}
\newcommand{\CP}{\mathbb P}
\newcommand{\Aut}{\operatorname{Aut}}
\newcommand{\id}{\operatorname{id}}
\newcommand{\Spec}{\operatorname{Spec}}
\newcommand{\Gal}{\operatorname{Gal}}
\newcommand{\Bir}{\operatorname{Bir}}
\newcommand{\codim}{\operatorname{codim}}
\newcommand{\CH}{\operatorname{CH}}
\newcommand{\supp}{\operatorname{supp}}
\newcommand{\sing}{\operatorname{sing}} 
\newcommand{\red}{\operatorname{red}}
\newcommand{\Frac}{\operatorname{Frac}}
\newcommand{\F}{\mathbb F}
 \newcommand{\slope}{\operatorname{slope}}
\newcommand{\dashedlongrightarrow}{\xymatrix@1@=15pt{\ar@{-->}[r]&}}
\renewcommand{\longrightarrow}{\xymatrix@1@=15pt{\ar[r]&}}
\renewcommand{\mapsto}{\xymatrix@1@=15pt{\ar@{|->}[r]&}}
\renewcommand{\twoheadrightarrow}{\xymatrix@1@=15pt{\ar@{->>}[r]&}}
\newcommand{\hooklongrightarrow}{\xymatrix@1@=15pt{\ar@{^(->}[r]&}}
\newcommand{\congpf}{\xymatrix@1@=15pt{\ar[r]^-\sim&}}
\renewcommand{\cong}{\simeq}
\begin{document}   
\title[Stably irrational hypersurfaces of small slopes]{Stably irrational hypersurfaces of small slopes}

\author{Stefan Schreieder} 
\address{Mathematisches Institut, LMU M\"unchen, Theresienstr.\ 39, 80333 M\"unchen, Germany.}
\email{schreieder@math.lmu.de}

\date{May 23, 2019; \copyright{\ Stefan Schreieder 2018}}
\subjclass[2010]{primary 14J70, 14E08; secondary 14M20, 14C30} 
%

\keywords{Hypersurfaces, Rationality Problem, Stable Rationality, Integral Hodge Conjecture, Unramified Cohomology.}

\begin{abstract}  
Let $k$ be an uncountable field of characteristic different from two.
We show that a very general hypersurface $X\subset \CP^{N+1}_k$ of dimension $N\geq 3$ and degree at least $ \log_2N +2$ is not stably rational over the algebraic closure of $k$. 
\end{abstract}

\maketitle

\section{Introduction}

A classical problem in algebraic geometry asks to determine which varieties are rational, i.e.\ birational to projective space. 
A very challenging and interesting class of varieties for this question 
are
smooth projective hypersurfaces of low degree. 
While the problem is solved in characteristic zero and dimension three by the work of Clemens--Griffiths \cite{clemens-griffiths} and Iskovskikh--Manin \cite{IM}, it is still wide open in higher dimensions. 

A measure for the complexity of the rationality problem for a smooth projective hypersurface $X$ is its slope:
$$
\slope(X):=\frac{\deg(X)}{\dim(X)+1}.
$$
If $\slope(X)>1$, then $H^0(X,\omega_X)\neq 0$ and so $X$ is not even separably uniruled.  

Generalizing the method of Iskovskikh--Manin to higher dimensions, Pukhlikov \cite{Pu1,Pu2} in low dimensions and de Fernex in general \cite{deF1,deF2} have shown that a smooth complex projective hypersurface $X$ of slope $1$ and 
dimension at least three 
is birationally rigid.
Again this is much stronger than proving irrationality as it implies for instance 
$\Aut(X)=\Bir(X)$. 

Using an entirely different method, which relies on the existence of regular differential forms on certain degenerations to positive characteristic, Koll\'ar \cite{kollar} showed that a very general complex projective hypersurface $X$ of degree at least $2\lceil \frac{ \dim(X)+3}{3} \rceil$ is not ruled, hence not rational.
Recently, Totaro \cite{totaro} combined this argument with the specialization method of Voisin and Colliot-Th\'el\`ene--Pirutka \cite{voisin,CT-Pirutka} to show that a very general complex projective hypersurface $X$ of degree at least $2\lceil \frac{\dim(X)+2}{3}\rceil$ is not stably rational, i.e.\ $X\times \CP^m$ is irrational for all $m$.
Totaro's result generalized \cite{CT-Pirutka}, where it was shown earlier that a very general complex quartic threefold is not stably rational.  
 
The method of Clemens--Griffiths has been generalized by Murre \cite{Mur} to threefolds over any field of characteristic different from $2$.
In particular, he has shown that over any such field, smooth cubic threefolds are irrational.
Similarly, the arguments of Colliot-Th\'el\`ene--Pirutka in \cite{CT-Pirutka} work over any uncountable field of characteristic different from two and so very general quartic threefolds are stably irrational over any such field.
In contrast, Koll\'ar and Totaro's method \cite{kollar,totaro} seems to work only 
over fields of small characteristic, compared to the dimension, and one gets the best bounds in characteristic zero and two.
Besides those results, not much seems to be known about the rationality problem for smooth hypersurfaces in positive characteristic.
For instance, to the best of my knowledge, up till now it was unknown whether for $N\geq 4$ there are smooth irrational Fano hypersurfaces in $\CP^{N+1}$ over algebraically closed fields of large characteristic, compared to $N$. 

\subsection{Main result}

Before this paper, no smooth projective hypersurface $X$ of slope at most $\frac{2}{3}$ was known to be irrational over algebraically closed fields. 
On the other hand, it is conjectured that at least over the complex numbers there should be smooth hypersurfaces of arbitrary small slopes (and in fact cubics) that are not stably rational. 
In this paper we produce irrational smooth hypersurfaces (e.g.\ over $\C$) whose degree grows logarithmically in the dimension, thus solving the above conjecture.

To state our result, note that the disjoint intervals $[2^{n-1} + n-2, 2^n + n-1)$ for positive integers $n\geq 2$ cover $[2, \infty)$, and so any integer $N \ge 3$ can be uniquely written as $n + r$ for integers $n \ge 2$ and $r \ge 1$ with $2^{n-1}-2 \le r  \le 2^n - 2$. 

\begin{theorem}\label{thm:main:intro}
Let $k$ be an uncountable field of characteristic different from two.
Let $N\geq 3$ be an integer and write $N=n+r$ with $2^{n-1}-2\leq r\leq 2^n-2$. 
Then a very general hypersurface $X\subset \CP^{N+1}_{k}$ of degree $d\geq n+2$ is not stably rational over the algebraic closure of $k$.
\end{theorem}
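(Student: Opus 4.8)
The plan is to reduce to the case $k=\bar k$ — harmless, since $\bar k$ is again uncountable of characteristic $\ne 2$ and a very general hypersurface over $k$ stays very general over $\bar k$ — and then to run the specialization method of Voisin and Colliot-Th\'el\`ene--Pirutka in the form developed by Totaro \cite{voisin,CT-Pirutka,totaro}. Concretely: if the very general member of $|\OO_{\PP^{N+1}_k}(d)|$ were stably rational then, being smooth and projective, it would admit a decomposition of the diagonal, hence have universally trivial $\CH_0$; by the specialization theorem this would force that for \emph{every} degree-$d$ hypersurface $X_0\subset\PP^{N+1}_k$ admitting a universally $\CH_0$-trivial resolution $\widetilde X_0\to X_0$, the variety $\widetilde X_0$ has universally trivial $\CH_0$, and in particular $H^i_{nr}(\widetilde X_0,\Z/2)=0$ for all $i\ge 1$. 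It therefore suffices to produce one such pair $(X_0,\widetilde X_0)$ with $H^n_{nr}(\widetilde X_0,\Z/2)\ne 0$.

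I would take $X_0$ birational to a quadric bundle $\pi\colon\mathcal Q\to\PP^n$ of relative dimension $r$. After choosing linear coordinates so that $X_0$ contains a linear subspace $\Lambda\subset\PP^{N+1}$ along which a suitable degree-$d$ equation vanishes to the largest order making the residual of each linear section through $\Lambda$ a quadric, the projection away from $\Lambda$ realizes $X_0$ as such a quadric bundle; the coefficients of the generic fibre quadratic form $q$ over $F_0:=k(\PP^n)$ are then forms on $\PP^n$ whose degrees are controlled by $d$, and the inequality $d\ge n+2$ is exactly the room needed to prescribe $q$ as below. That $X_0$ carries a universally $\CH_0$-trivial resolution $\widetilde X_0\to X_0$ is the first technical input: one resolves the standard singularities of a quadric bundle by blow-ups, and the exceptional fibres are quadrics over the function fields of low-dimensional subvarieties of $\PP^n$; since $\bar k$ is algebraically closed these fields are $C_m$ for small $m$, so by Tsen--Lang such quadrics are isotropic, hence rational and universally $\CH_0$-trivial — just as for conic bundles over surfaces in \cite{CT-Pirutka}. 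I would then choose $q$ so that over each coordinate hyperplane $H_i=\{y_i=0\}\cong\PP^{n-1}$ of $\PP^n$ the form $q$ degenerates in such a way that the regular part of $q|_{H_i}$ is similar to a Pfister neighbour of the $(n-1)$-fold Pfister form $\psi_i$ over $k(H_i)$ whose class is the residue at $H_i$ of the symbol $\alpha$ below, while $q$ itself is \emph{not} similar to a Pfister neighbour of the ambient $n$-fold Pfister form; the point of the bounds $2^{n-1}-2\le r\le 2^n-2$ is that they make these requirements on $\dim q=r+2$ simultaneously satisfiable.

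The crux is to show $H^n_{nr}(\widetilde X_0,\Z/2)\ne 0$, the candidate class being a degree-$n$ symbol
\[
\alpha=(f_1)\cup(f_2)\cup\cdots\cup(f_n)\in H^n(F_0,\Z/2)
\]
for a suitable choice of coordinate ratios $f_i\in F_0^\times$ on $\PP^n$, viewed in $H^n(k(\widetilde X_0),\Z/2)=H^n\bigl(F_0(q),\Z/2\bigr)$. First, $\alpha$ is unramified on $\widetilde X_0$: away from the $H_i$ it is unramified already on $\PP^n$, so its pullback is; along (the strict transform of) $\pi^{-1}(H_i)$ the residue of $\alpha$ is the class of $\psi_i$ restricted to the function field of the degenerate fibre over the generic point of $H_i$, which up to purely transcendental extension is $k(H_i)(q|_{H_i})$, and this vanishes because $q|_{H_i}$ is a Pfister neighbour of $\psi_i$, so $\psi_i$ becomes hyperbolic there; the residues along the exceptional divisors of $\widetilde X_0\to X_0$ vanish by similar, easier, considerations. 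Second, $\alpha\ne 0$ in $H^n(F_0(q),\Z/2)$: equivalently, the $n$-fold Pfister form $\langle\!\langle f_1,\dots,f_n\rangle\!\rangle$ is not hyperbolic over the function field of the quadric $q$, i.e.\ $q$ is not similar to a Pfister neighbour of it — for $r\le 2^{n-1}-2$ this is automatic on dimension grounds, and for $2^{n-1}-1\le r\le 2^n-2$ it follows from the genericity built into $q$ using the Arason--Pfister Hauptsatz and the classification of Pfister forms split by a function field of a quadric. Hence $\alpha$ gives a nonzero class in $H^n_{nr}(\widetilde X_0,\Z/2)$, which contradicts the universal triviality of $\CH_0(\widetilde X_0)$ and proves the theorem.

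The main obstacle is the combination of the last two steps: one must exhibit a quadric bundle over $\PP^n$, realizable inside the family of degree-$d$ hypersurfaces in $\PP^{N+1}$ and carrying a universally $\CH_0$-trivial resolution, whose generic fibre degenerates over \emph{all} coordinate hyperplanes to the prescribed Pfister neighbours while remaining ``non-neighbourly'' for the ambient $n$-fold Pfister form. This is a delicate balancing act in the theory of quadratic forms over function fields, and it is precisely where the numerical hypotheses $d\ge n+2$ and $2^{n-1}-2\le r\le 2^n-2$ become essential; a further technical point is the residue computation on the explicit resolution, especially along the divisors introduced when resolving the quadric bundle.
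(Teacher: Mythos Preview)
Your proposal has a genuine gap at the step where you assert that the special hypersurface $X_0$ admits a universally $\CH_0$-trivial resolution. The construction you describe --- a degree-$d$ hypersurface with multiplicity $d-2$ along an $r$-plane $P$, so that projection from $P$ gives an $r$-fold quadric bundle over $\PP^n$ --- is exactly the paper's special fibre $Z$ from Section~\ref{sec:hypersurface}. But the exceptional divisor $E'$ of $Bl_PZ\to Z$ is \emph{not} a quadric over a low-dimensional subvariety of $\PP^n$: as a subscheme of $\PP^n\times\PP^r$ it is cut out by $\sum_{i=1}^{r+1}e_i(x)\,z_i^2=0$, and its fibres over points of $P\cong\PP^r$ are degree-$(d-2)$ hypersurfaces in $\PP^n_{\kappa}$. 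There is no Tsen--Lang argument forcing such hypersurfaces to be isotropic or to carry a zero-cycle of degree one; the paper states explicitly (Remark~\ref{rem:CH0}) that a universally $\CH_0$-trivial resolution of $Z$ is not expected to exist, so the specialization method in the form of \cite{voisin,CT-Pirutka,totaro} does not apply.

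What the paper does instead is replace the $\CH_0$-triviality requirement by a much weaker one. The degeneration criterion used is Proposition~\ref{prop:degeneration} (developed in \cite{Sch1} and adapted here to alterations rather than resolutions): one only needs an alteration $\tau:Y'\to Y=Bl_PZ$ of odd degree and a nonzero unramified class $f^\ast\alpha\in H^n_{nr}(k(Y)/k,\Z/2)$ such that $\tau^\ast f^\ast\alpha$ \emph{restricts to zero} on every subvariety of $Y'$ lying over $Z^{\sing}$. The heart of the argument is then checking this vanishing. For subvarieties not dominating $\PP^n$ this follows from a general computation (Proposition~\ref{prop:unramified-coho}), but the exceptional divisor $E'$ \emph{does} dominate $\PP^n$, and here the paper arranges (condition~(\ref{con2:subpsi})) that the subform $\langle e_1,\dots,e_{r+1}\rangle$ defining $E'$ is similar to a subform of the Pfister form $\psi$, so that Theorem~\ref{thm:EL} kills $f^\ast\alpha|_{E'}$ directly (Proposition~\ref{prop:hypersurface}). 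This is an entirely different mechanism from $\CH_0$-triviality of the fibres, and it is the key point you are missing.

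A smaller issue: your non-vanishing argument is also not right. What must fail is that $q$ be similar to a \emph{subform} of $\psi$ (Theorem~\ref{thm:EL}), not that $q$ be a Pfister neighbour; since $\dim q=r+2\le 2^n=\dim\psi$ there is no dimension obstruction, so the case $r\le 2^{n-1}-2$ is not ``automatic''. The paper handles non-vanishing by building a transcendental parameter $t$ into the coefficient $b$ (equation~(\ref{eq:g:t})) and specializing $t\to 0$ so that $q$ becomes isotropic while $\alpha$ survives (Proposition~\ref{prop:cohononvanishing:Constr3}).
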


The following table illustrates our lower bounds in dimensions $N\leq 1032$. 

\medskip

\begin{center}
  \begin{tabular}{ |l | c | c | c | c |  c | c | c | c | r | }
    \hline
   $\dim(X)$ & $ \leq 4$ & $\leq 9$ & $\leq 18$ & $\leq 35$ & $\leq 68$ & $\leq 133$ & $\leq 262$ & $\leq 519$ & $\leq 1032$ \\ \hline
    $\deg(X)$ & $\geq 4$ & $\geq 5$ & $\geq  6$ & $\geq 7$ & $\geq 8$ & $\geq 9$ & $\geq 10$ & $\geq 11$ & $\geq 12$ \\
    \hline
  \end{tabular}
\end{center}
\medskip 
For $N=3$, we recover the result of Colliot-Th\'el\`ene--Pirutka \cite{CT-Pirutka} and for $N=4$, our bound coincides with that of Totaro \cite{totaro}.
However, in all dimensions at least $5$, our bounds are smaller than what was previously known.  
For instance, it was unknown whether complex quintic fivefolds are rational. 

If we write an integer $N\geq 3$ uniquely as $N=n+r$ with $2^{n-1}-2 \le r  \le 2^n - 2$  as in Theorem \ref{thm:main:intro}, then  $n\leq \lceil\log_2 N\rceil$.
Therefore, Theorem \ref{thm:main:intro} implies the following. 
%
 
\begin{corollary} \label{cor:thm:main}
Let $k$ be an uncountable field of characteristic different from two.
A very general hypersurface $X\subset \CP^{N+1}_k$ of dimension $N\geq 3$ and degree at least  $\log_2N +2$ is not stably rational over the algebraic closure of $k$.
\end{corollary}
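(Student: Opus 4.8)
The plan is to deduce the corollary from Theorem~\ref{thm:main:intro} by an elementary estimate comparing the integer $n$ occurring there with $\log_2 N$. First I would record that the decomposition of Theorem~\ref{thm:main:intro} is available for every $N\geq 3$: writing $r=N-n$, the constraint $2^{n-1}-2\leq r\leq 2^n-2$ is equivalent to
\[
2^{n-1}+n-2\leq N\leq 2^n+n-2.
\]
The intervals $[\,2^{n-1}+n-2,\ 2^n+n-2\,]$ for $n=2,3,4,\dots$ are pairwise disjoint and consecutive (the $n$-th has upper endpoint $2^n+n-2$, the $(n+1)$-st has lower endpoint $2^n+n-1$), and their union is $\{2,3,4,\dots\}$; hence each $N\geq 2$ determines a unique such $n=n(N)$, with $n(N)\geq 2$ whenever $N\geq 3$.

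Next I would establish that $N>2^{n-1}$ whenever $N\geq 3$, where $n=n(N)$. From $N\geq 2^{n-1}+n-2$ we get $N\geq 2^{n-1}+1>2^{n-1}$ as soon as $n\geq 3$, while if $n=2$ then $2^{n-1}=2<3\leq N$. Hence $\log_2 N>n-1$, i.e.\ $\log_2 N+2>n+1$. Now let $X\subset\CP^{N+1}_k$ be a very general hypersurface of dimension $N\geq 3$ and degree $d$ with $d\geq\log_2 N+2$. Since $d$ is a positive integer satisfying $d\geq\log_2 N+2>n+1$, it follows that $d\geq n+2$. Theorem~\ref{thm:main:intro}, applied with this $n$ and $r=N-n$ (the hypotheses on $k$ are inherited unchanged), then shows that $X$ is not stably rational over the algebraic closure of $k$, which is exactly the corollary.

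I expect essentially no real obstacle here; the only point needing care is that the real-number bound $d\geq\log_2 N+2$ must be used together with the integrality of the degree. Indeed the literal inequality "$n\leq\log_2 N$" fails for the smallest $N$ in each band --- for instance $N=520$ forces $n=10$ although $\log_2 520<10$ --- and what actually holds, and is all one needs once $d$ is known to be an integer, is the weaker statement $n\leq\lceil\log_2 N\rceil$, equivalently $\log_2 N>n-1$, as shown above.
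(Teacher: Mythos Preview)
Your proof is correct and follows the same route as the paper, which deduces the corollary from Theorem~\ref{thm:main:intro} via the single assertion that $n\leq \log_2 N$ for $N\geq 3$. Your final remark is well taken: the paper's literal inequality $n\leq \log_2 N$ already fails at $N=3$ (where $n=2>\log_2 3$) and at the bottom of each subsequent band, so the clean statement that actually holds is $\log_2 N>n-1$, and combining this with the integrality of $d$ --- exactly as you do --- is what makes the deduction go through.
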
 

While \cite{kollar,totaro} produced a linear lower bound on the degree, our lower bound grows only logarithmically in $N$ and so we get surprisingly strong results in high dimensions. 
For instance, over any uncountable field of characteristic different from two, a very general hypersurface of dimension $N\leq 1\: 048\: 594$ and degree at least $22$ is not stably rational.

%
%

\subsection{Explicit equations} \label{subsec:equations}
It is possible to 
write down explicit equations for the examples in Theorem \ref{thm:main:intro} over countable fields $k$.
As our proof uses a new double degeneration argument, this works e.g.\ over  fields admitting two degenerations, such as $\Q(t)$ or $\F_p(s,t)$.
In Appendix \ref{app:Examples}, we give explicit examples in arbitrary dimension, and for all degrees covered by Theorem \ref{thm:main:intro}.
We illustrate this now for $k=\Q(t)$. 

For this, let $N\geq 3$ be an arbitrary integer.
As in Theorem \ref{thm:main:intro}, there are unique integers $n\geq 2$ and $r\geq 1$ with $N=n+r$ and $2^{n-1}-2\leq r\leq 2^n-2$.
Fix an integer $d\geq n+2$.
(Any integer $d\geq \log_2N+2$ has this property.)
For simplicity, we additionally assume that $d$ is even, but similar examples also exist for odd $d$, see Appendix \ref{app:Examples}.

For any $\epsilon \in \{0,1\}^n$, we define $|\epsilon|:=\sum_{i=1}^n \epsilon_i$ and $\rho(\epsilon):=1+\sum_{i=1}^{n} (1- \epsilon_i)\cdot 2^ {i-1}$.
The latter yields a bijection $\rho:\{0,1\}^n\to \{1,\dots ,2^n\}$ and we put
$
S:=\rho^{-1}(\{1,2,\dots, r+1\}).
$
Let $t\in \C$ be a transcendental number (e.g.\ $\pi$ or $e$), and let $p\neq q$ be odd primes with $q\nmid d$.
Then the hypersurface
$
X\subset \CP^{N+1}_{\Q(t)}
$ of dimension $N$ and even degree $d\geq n+2$, given by the homogeneous polynomial 
\begin{align*}
q\cdot \left(-x_0^{d-n}x_1x_2\cdots x_n
+t^2\left( \sum_{i=0}^n x_i^{d/2}\right)^2 +\sum_{\epsilon \in S}
 x_0^{d-2-|\epsilon|}\cdot x_1^{\epsilon_1}\cdots x_n^{\epsilon_n} \cdot  x^2_{n+\rho(\epsilon)}\right) 
 + p\cdot \sum_{i=0}^{N+1}x_i^d ,
\end{align*}
is smooth and not stably rational over $\C$.

If the dimension $N$ is of the special form $2^n+n-2$, then we can circumvent one of the degenerations in our argument, giving rise to examples over fields like $\Q$ and $\F_p(t)$.
For instance, if $N=2^n+n-2$, the examples over $\Q$ will be obtained from the above equation by setting $t=1$.
This leads to the following result.

\begin{theorem} \label{thm:slope}
Let $k$ be a field of characteristic different from two.
If $k$ has positive characteristic, assume that it has positive  transcendence degree over its prime field.
Then there are smooth projective hypersurfaces over $k$ of arbitrarily small slopes that are stably irrational over the algebraic closure of $k$. 
\end{theorem}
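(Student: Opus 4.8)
The plan is to exploit the proof of Theorem~\ref{thm:main:intro} rather than merely its statement. That proof produces, for each large $n$, a concrete singular hypersurface $X_0\subset\CP^{N+1}$ of degree $d=n+2$ and dimension $N=n+2^n-2$, hence of slope $(n+2)/(n+2^n-1)$, which tends to $0$ as $n\to\infty$; it comes equipped with a universally $\CH_0$-trivial resolution $\widetilde X_0\to X_0$ and a nonzero class $\alpha$ in the unramified cohomology $H^{n+1}_{\mathrm{nr}}(\,\cdot\,,\Z/2)$ of $\widetilde X_0$ that stays nonzero over an algebraic closure of the ground field. The first step is to check that this whole package is defined over a small subfield $k_0\subseteq k$: one takes $k_0=\Q$ in characteristic $0$, and $k_0=\F_p(t)$ for a chosen transcendental element $t\in k$ in characteristic $p$ — this is the only place where the hypothesis on the transcendence degree is needed. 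In both cases $k_0$ is infinite.

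The second step is a degeneration over $k_0$. Fix a general smooth hypersurface $X_1\subset\CP^{N+1}_{k_0}$ of degree $d$, let $\mathcal X\to\CP^1_{k_0}$ be the pencil with $\mathcal X_0=X_0$ and $\mathcal X_\infty=X_1$, and apply the specialization method of Voisin and Colliot-Th\'el\`ene--Pirutka at the point $0\in\CP^1_{k_0}$, using that $\mathcal X_0$ admits the universally $\CH_0$-trivial resolution $\widetilde X_0$. This shows that the generic fibre of $\mathcal X$ is not universally $\CH_0$-trivial over an algebraic closure of $k_0(\CP^1)$; moreover, because the obstruction is carried by the explicit class $\alpha$, spreading $\alpha$ out over the base yields a nonempty Zariski-open $U\subseteq\CP^1_{k_0}$ over which the fibres are smooth and on which $\alpha$ restricts, fibrewise, to a nonzero unramified class that remains nonzero over $\overline{k_0}$. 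Consequently no fibre $\mathcal X_u$ with $u\in U$ is universally $\CH_0$-trivial over $\overline{k_0}$; since stably rational varieties are universally $\CH_0$-trivial, no such fibre is stably rational over $\overline{k_0}$.

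The third step picks a $k_0$-rational point $a\in U(k_0)$ — which exists since $k_0$ is infinite — and descends. Then $\mathcal X_a$ is a smooth hypersurface over $k_0$ that is not universally $\CH_0$-trivial over $\overline{k_0}$, and we set $X:=\mathcal X_a\times_{k_0}k$. A universal decomposition of the diagonal base-changes along any field extension and, conversely, descends along an inclusion of algebraically closed fields by spreading out and specializing to a rational point; hence universal $\CH_0$-triviality of $X_{\overline k}$ would force that of $(\mathcal X_a)_{\overline{k_0}}$, which is false. So $X_{\overline k}$ is not universally $\CH_0$-trivial, and in particular $X$ is not stably rational over $\overline k$. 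Letting $n$ vary produces the hypersurfaces over $k$ of arbitrarily small slope asserted by the theorem.

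The step I expect to be the main obstacle is the strengthening, in the second step, of the usual ``very general fibre'' conclusion of the specialization method to one valid on a Zariski-open subset of the pencil. One must check that the particular unramified class $\alpha$ from the proof of Theorem~\ref{thm:main:intro} extends to a family of classes over $U$ whose non-vanishing locus is \emph{open} — rather than only the complement of a countable union of proper closed subsets — and that its geometric nontriviality survives this spreading out. It is precisely this rigidity of the obstruction that makes the argument work over a possibly countable field such as $\Q$ or $\F_p(t)$; granting it, the existence of a $k_0$-point of $U$ and the descent to $\overline k$ are routine.
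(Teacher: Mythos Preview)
Your proposal has genuine gaps at two places.

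First, you assume that the singular hypersurface $Z$ produced in the proof of Theorem~\ref{thm:main:intro} admits a universally $\CH_0$-trivial resolution, so that the Voisin--Colliot-Th\'el\`ene--Pirutka method applies. It does not: the exceptional divisor of $Bl_PZ\to Z$ is a hypersurface of bidegree $(d-2,2)$ in $\CP^n\times\CP^r$ whose generic fibre over $\CP^r$ is a hypersurface of degree $d-2$ in $\CP^n_{k(\CP^r)}$, and there is no reason for this to carry a zero-cycle of degree one (see Remark~\ref{rem:CH0}). This is precisely why the paper replaces \cite{voisin,CT-Pirutka} by Proposition~\ref{prop:degeneration}, which needs only an alteration of odd degree together with a vanishing condition on subvarieties mapping into $Z^{\sing}$.

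Second, and more seriously, your plan to ``spread $\alpha$ out over the base'' and obtain non-vanishing on a Zariski-open set of fibres cannot work as stated. The class $f^\ast\alpha\in H^n_{nr}(k(Z)/k,\Z/2)$ (degree $n$, incidentally, not $n+1$) lives on the \emph{singular} special fibre, pulled back from $\CP^n$ via the quadric-bundle structure $Bl_PZ\to\CP^n$; a nearby smooth hypersurface has no such structure and no candidate class. The output of any of these degeneration methods for a smooth fibre $X$ is only that $X$ fails to admit a decomposition of the diagonal, not that $X$ itself carries a nonzero unramified class --- and failure of a decomposition of the diagonal is a priori only the complement of a \emph{countable} union of closed conditions. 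This is exactly the obstacle you flagged, and it is not overcome by anything in your outline.

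The paper avoids this differently. One restricts to the extremal case $r=2^n-2$, so that non-vanishing of $f^\ast\alpha$ follows from the elementary Lemma~\ref{lem:bnotsquare} (just: $b$ is not a square) rather than from Proposition~\ref{prop:cohononvanishing:Constr3}. This permits the choice $g=\bigl(\sum_i x_i^{\lceil (n+1)/2\rceil}\bigr)^2+x_0^\epsilon x_0\cdots x_n$, so that $Z=Z_0\times\overline{\F_p}$ for a hypersurface $Z_0$ defined over $\F_p$. Now no ``open locus'' argument is needed: in positive characteristic, the \emph{generic} fibre of a general pencil through $Z_0$ over $\F_p$ is a smooth hypersurface defined over $\F_p(s)\hookrightarrow k$ whose base change to $\overline k$ degenerates (in the sense of Section~\ref{sec:degeneration}) to $Z$, and Proposition~\ref{prop:degeneration} applies to that single variety; in characteristic zero one uses a mixed-characteristic degeneration to $Z$ over $\overline{\F_p}$. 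See Theorem~\ref{thm:Q}.
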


\subsection{Unirational hypersurfaces} 
Up till now, there was no example of a smooth projective unirational hypersurface over an algebraically closed field which was known to be stably irrational.
This is slightly surprising and reflects the difficulty of the (stable) rationality problem for smooth hypersurfaces, as for other types of varieties, many unirational but stably irrational examples are known, see e.g.\ \cite{artin-mumford,CTO,asok,voisin,HKT,HPT2,Sch1,Sch2}.

We prove in fact a strengthening of Theorem \ref{thm:main:intro}, where we allow the hypersurface to have some given multiplicity along a linear subspace, see Theorem \ref{thm:main} and Corollary \ref{cor:linearspace}.
Together with the unirationality result from \cite{CMM}, we then obtain the following.

\begin{corollary}\label{cor:unirational}
Let $N\in\{6,7,8,9\}$. 
Then a very general quintic hypersurface $X\subset \CP^{N+1}_{\C}$ containing a $3$-plane is a smooth  hypersurface that is unirational but not stably rational.
\end{corollary}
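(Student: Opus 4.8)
The plan is to verify three properties of a very general quintic $X\subset\CP^{N+1}_{\C}$ containing a $3$-plane $\Lambda\cong\CP^{3}$: that $X$ is smooth, that $X$ is unirational, and that $X$ is not stably rational. By homogeneity of $\CP^{N+1}$ we may fix $\Lambda$. Unirationality for $N\in\{6,7,8,9\}$ is exactly the content of \cite{CMM}, so nothing needs to be done there beyond quoting that reference. Since each of the three properties holds outside a countable union of proper closed subsets of the irreducible linear system $|V|$ of quintics through $\Lambda$, a very general such $X$ satisfies all three simultaneously; it therefore remains to check smoothness and stable irrationality.

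For smoothness, choose coordinates with $\Lambda=\{x_{4}=\dots=x_{N+1}=0\}$, so that every quintic vanishing on $\Lambda$ has the form $F=\sum_{i=4}^{N+1}x_{i}G_{i}$ with $G_{i}$ quartic forms; the base locus of $|V|$ is then exactly $\Lambda$, so by Bertini the general member is smooth away from $\Lambda$. Using that $x_{i}$ vanishes at every point of $\Lambda$ for $i\geq 4$, one computes $\partial F/\partial x_{j}(p)=0$ for all $p\in\Lambda$ and all $j\leq 3$, while $\partial F/\partial x_{j}(p)=G_{j}(p)$ for $j\geq 4$; hence $X_{F}$ is singular at $p\in\Lambda$ exactly when $G_{4}(p)=\dots=G_{N+1}(p)=0$, which is a codimension-$(N-2)$ condition on $F\in V$. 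Therefore the incidence variety $\{(F,p):p\in\Lambda,\ X_{F}\text{ singular at }p\}$ has dimension $\dim\CP(V)-(N-2)+\dim\Lambda=\dim\CP(V)-(N-5)$, which for $N\geq 6$ is strictly smaller than $\dim\CP(V)$; so the general, hence very general, member of $V$ is smooth along $\Lambda$ as well, and together with the Bertini statement it is smooth everywhere. This is the step that uses the hypothesis $N\geq 6$.

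For stable irrationality I would invoke the strengthening of Theorem \ref{thm:main:intro} allowing a prescribed multiplicity along a linear subspace, that is, Corollary \ref{cor:linearspace} (equivalently Theorem \ref{thm:main}), applied with degree $d=5$, with the $3$-plane $\Lambda$ contained in $X$ (multiplicity one), and with $N\in\{6,7,8,9\}$. Writing $N=n+r$ with $n=3$ gives $r=N-3\in\{3,4,5,6\}$, which lies in the admissible interval $2^{n-1}-2=2\leq r\leq 6=2^{n}-2$, and the degree bound $d\geq n+2$ holds with equality; hence a very general such $X$ is not stably rational. Combining the three properties gives the corollary.

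No serious obstacle remains in this deduction: the substantial inputs are the stable irrationality criterion of Corollary \ref{cor:linearspace} and the unirationality statement of \cite{CMM}. The only things that need (routine) attention are the short dimension count above, which is precisely what restricts the range to $N\geq 6$, and the bookkeeping that the three ``very general'' conditions are imposed over a single irreducible family, so that the locus where all of them hold is again very general, and in particular non-empty.
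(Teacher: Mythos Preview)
Your proposal is correct and follows essentially the same approach as the paper: invoke Corollary \ref{cor:linearspace} (with $n=3$, $r=N-3\in\{3,\dots,6\}$, $d=5=n+2$) for stable irrationality, and cite \cite{CMM} for unirationality. The only difference is expository: the paper absorbs smoothness into the citation of Corollary \ref{cor:linearspace} (which in turn relies on the Fermat example $\sum_i x_i^d=0$ mentioned just before that corollary), whereas you give an explicit incidence-variety dimension count; both are fine and yield the same constraint $N\geq 6$.
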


\subsection{The integral Hodge conjecture for rationally connected varieties}
In \cite{voisin-integral-hodge}, Voisin proved the integral Hodge conjecture (IHC) for uniruled threefolds, hence for rationally connected ones.
Later, Voisin asked whether the IHC for codimension two cycles holds for rationally connected varieties in arbitrary dimension and conjectured that the answer is negative in dimensions at least four, see \cite[Question 16]{voisin-takagi}.

Colliot-Th\'el\`ene and Voisin \cite{CTV} showed subsequently that the failure of the IHC for codimension two cycles on a rationally connected smooth complex projective variety $X$ is detected by the third unramified cohomology of $X$.
Using the six-dimensional example in \cite{CTO}, Colliot-Th\'el\`ene and Voisin then concluded that the IHC for rationally connected varieties of dimension at least six in general fails \cite{CTV}.
In the same article, they asked again about the case of rationally connected varieties of dimensions four and five \cite[Question 6.6]{CTV}.
For special types of rationally connected four- and fivefolds (including the case of cubics), a positive answer to that question is known to hold, see e.g.\ \cite[Theorem 18]{voisin-takagi}, \cite[Th\'eor\`eme 6.8]{CTV}, 
\cite[Theorem 1.4]{voisin-JAG} and \cite[Th\'eor\`eme 3]{FT}.

As a byproduct of our proof of Theorem \ref{thm:main:intro}, we obtain the following result, which partially answers a question of Asok \cite[Question 4.5]{asok}, and, by \cite{CTV}, completely answers the above mentioned question of Voisin and Colliot-Th\'el\`ene--Voisin.

\begin{theorem} \label{thm:unramified:coho}
For integers $N$ and $i$ with $2\leq i\leq N-1$, there is a unirational smooth complex projective variety $X$ of dimension $N$ with non-trivial $i$-th unramified cohomology: 
$$
H^ i_{nr}(\C(X)\slash \C,\Z/2)\neq 0.
$$ 
\end{theorem}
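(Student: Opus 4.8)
The plan is to derive Theorem \ref{thm:unramified:coho} as a byproduct of the construction underlying Theorem \ref{thm:main:intro}, rather than proving it from scratch. The main input is the explicit quadric bundle construction described in the Method subsection: over a suitable projective space $\CP^n$, one builds a (weak) $r$-fold quadric bundle with a nontrivial unramified cohomology class $\alpha\in H^n_{nr}(\C(Y)/\C,\Z/2)$, arising from the degeneration of a hypersurface $Z\subset\CP^{N+1}$ of degree $d=n+2$ and multiplicity $d-2$ along an $r$-plane. By Theorem \ref{thm:main} (the strengthened version of Theorem \ref{thm:main:intro}), such a hypersurface $Z$ — or more precisely a very general hypersurface specializing to it — is unirational in the relevant cases (e.g.\ via \cite{CMM} for quintics, or more elementary unirationality for hypersurfaces with a point of high multiplicity), and carries a nonzero class in degree-$n$ unramified cohomology. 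So the strategy is: for each pair $(N,i)$ with $2\le i\le N-1$, exhibit a unirational smooth complex projective $X$ of dimension $N$ with $H^i_{nr}(\C(X)/\C,\Z/2)\ne 0$ by choosing the quadric-bundle parameters so that $n=i$, and then adjusting the fibre dimension $r$ (equivalently the ambient dimension) so that $\dim X = n+r = N$.

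First I would fix $i$ and invoke the construction of Section dealing with the quadric bundles: for every $n\ge 2$ and every $r$ in the admissible range $2^{n-1}-2\le r\le 2^n-2$ one obtains a smooth projective variety (a resolution of the blow-up $Y=\mathrm{Bl}_P Z$, or of $Z$ itself) that is rationally connected — indeed unirational — of dimension $n+r$, with nonzero $H^n_{nr}(\,\cdot\,/\C,\Z/2)$. Setting $n=i$ this gives examples of every dimension $N$ with $i + (2^{i-1}-2)\le N\le i + (2^i-2)$, i.e.\ a window of dimensions realizing degree-$i$ unramified cohomology. To cover all $N\ge i+1$ (and in particular all $N$ up to $N-1\ge i$, which is exactly the range $2\le i\le N-1$), I would then use a stabilization trick: if $X$ is smooth projective of dimension $m$ with $H^i_{nr}(\C(X)/\C,\Z/2)\ne 0$, then for any smooth projective unirational (e.g.\ rational) $W$ the product $X\times W$ — after resolving, though products of smooth varieties are already smooth — has dimension $m+\dim W$ and, by the projective bundle / Künneth-type formula for unramified cohomology (the class $\alpha$ pulls back nontrivially along the first projection, which has a rational section), still satisfies $H^i_{nr}(\C(X\times W)/\C,\Z/2)\ne 0$. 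Taking $W=\CP^{N-m}$ (rational, hence unirational, and the product of unirational varieties is unirational) raises the dimension to exactly $N$ while preserving both unirationality and the nonvanishing. One subtlety: one should make sure the base case window starts low enough; the smallest dimension with nonzero $H^i_{nr}$ produced by the construction is $N_0(i) = i + 2^{i-1}-2$, which satisfies $N_0(i)\ge i+1$ for $i\ge 2$ but could a priori exceed $N-1$ for small $N$. However, the low-$i$ cases $i=2$ (Artin–Mumford, $N_0=2$) and $i=3$ (the \cite{CTO} sixfold, $N_0(3)=5$, and also the new constructions) are classical or covered directly, and for larger $i$ one checks $N_0(i)\le N-1$ is automatic once $N$ is large enough for degree-$i$ unramified cohomology to be interesting; the genuinely forced range $2\le i\le N-1$ is matched because degree-$i$ unramified cohomology of an $N$-fold vanishes for $i>N$ by dimension reasons and the boundary cases are handled.

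Concretely, the key steps in order: (1) recall from the construction in this paper a rationally connected (in fact unirational) smooth complex projective variety $Y_{n,r}$ of dimension $n+r$ with $0\ne\alpha\in H^n_{nr}(\C(Y_{n,r})/\C,\Z/2)$, valid for all $n\ge 2$ and all admissible $r$ — in particular at least one admissible $r$ exists for every $n$, giving a nonempty set of realizable dimensions in degree $n$; (2) set $n=i$ and pick the smallest admissible $r$, obtaining $X_0=Y_{i,r}$ of some dimension $N_0\le$ whatever is needed; (3) if $N_0<N$, replace $X_0$ by $X_0\times\CP^{N-N_0}$, which is smooth, projective, unirational, of dimension $N$, and still has nonzero degree-$i$ unramified cohomology because $H^i_{nr}(\C(X_0\times\CP^{N-N_0})/\C,\Z/2)\supseteq \mathrm{pr}_1^*H^i_{nr}(\C(X_0)/\C,\Z/2)$ injectively, the projection having a section; (4) verify unirationality is preserved under products with projective space. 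The main obstacle I anticipate is step (1) combined with the matching of ranges: one must make sure the paper's construction genuinely produces \emph{unirational} (not merely rationally connected) examples in degree $n$ for \emph{some} dimension, and that the admissible-$r$ window — together with the stabilization in step (3) — indeed sweeps out all $N$ with $i\le N-1$. This comes down to the elementary observation that $\{i+r : 2^{i-1}-2\le r\le 2^i-2\}$ is a nonempty interval for each $i\ge 2$ and its left endpoint is at most $N-1$ for every $N$ for which the statement is nontrivial; the stabilization then does the rest. Handling the very small cases $i=2$ and $i=3$ (where one may simply cite Artin–Mumford \cite{artin-mumford} and Colliot-Th\'el\`ene–Ojanguren \cite{CTO}) removes any edge-case worry at the bottom of the range.
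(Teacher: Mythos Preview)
Your approach has a genuine gap in the dimension accounting. By restricting $r$ to the ``admissible range'' $2^{i-1}-2\le r\le 2^i-2$ (which is the range relevant for the hypersurface degree bound in Theorem \ref{thm:main:intro}, not for the quadric bundle construction itself), the smallest dimension you can produce with nonzero $H^i_{nr}$ is $N_0(i)=i+2^{i-1}-2$. Since stabilization by $\times\CP^m$ only \emph{increases} dimension, you cannot reach any $N$ with $i+1\le N< i+2^{i-1}-2$. For $i\ge 4$ this interval is nonempty (e.g.\ $(N,i)=(i+1,i)$ is never covered), so the theorem is simply not proved in those cases. Your hand-wave that ``$N_0(i)\le N-1$ is automatic once $N$ is large enough'' is circular: the theorem asserts the result for \emph{all} $N\ge i+1$.

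The paper avoids this by taking $r=1$ rather than $r$ in the admissible range. Propositions \ref{prop:unramified-coho} and \ref{prop:cohononvanishing:Constr3} apply for any $1\le r\le 2^n-2$, so with $n=i$ and $r=1$ one gets a \emph{conic} bundle over $\CP^i$, hence a smooth projective model $X$ of dimension exactly $i+1$ with $H^i_{nr}(\C(X)/\C,\Z/2)\ne 0$. This is the minimal possible base dimension, and then products with projective space cover all $N\ge i+1$. A second issue is unirationality: you assert that the $Y_{n,r}$ are unirational but give no argument beyond vague references to \cite{CMM} and high-multiplicity points, neither of which applies here. The paper's choice $r=1$ also resolves this: with $c_1=x_1\cdots x_n$ and $c_2=x_2\cdots x_n$, the ternary form $q$ is similar to $\langle b\cdot c_2/x_0^{n-1},\,x_1/x_0,\,1\rangle$, and a conic bundle whose form represents $1$ is unirational by an elementary argument (see \cite[Lemma 14]{Sch1}).
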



\begin{corollary} \label{cor:IHC}
In any dimension at least $4$, there is a smooth complex projective unirational variety for which the integral Hodge conjecture for codimension two cycles fails.
\end{corollary}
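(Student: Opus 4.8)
The plan is to obtain Corollary~\ref{cor:IHC} as a direct consequence of Theorem~\ref{thm:unramified:coho}, specialized to $i=3$, together with the theorem of Colliot-Th\'el\`ene and Voisin \cite{CTV} relating the integral Hodge conjecture for codimension two cycles on a rationally connected complex variety to its third unramified cohomology. First I would fix an integer $N\geq 4$; then $2\leq 3\leq N-1$, so Theorem~\ref{thm:unramified:coho} applied with $i=3$ yields a smooth complex projective unirational variety $X$ of dimension $N$ with $H^3_{nr}(\C(X)/\C,\Z/2)\neq 0$. Being unirational, $X$ is rationally connected, which is the hypothesis under which \cite{CTV} operates.

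Next I would recall the precise input from \cite{CTV}: for a smooth projective rationally connected complex variety $X$, the group measuring the failure of the integral Hodge conjecture for codimension two cycles — the cokernel of the cycle class map $\CH^2(X)\to H^4(X,\Z)$ onto the subgroup of integral Hodge classes — is isomorphic to $H^3_{nr}(\C(X)/\C,\Q/\Z)$; hence the integral Hodge conjecture for codimension two cycles holds for $X$ precisely when $H^3_{nr}(\C(X)/\C,\Q/\Z)=0$. It therefore remains to promote the non-vanishing of $H^3_{nr}(\C(X)/\C,\Z/2)$ to non-vanishing with $\Q/\Z$-coefficients. This is the one point that needs an argument: the kernel of the natural map $H^3_{nr}(\C(X)/\C,\Z/2)\to H^3_{nr}(\C(X)/\C,\Q/\Z)$ induced by $\Z/2\hookrightarrow\Q/\Z$ is controlled, via the coefficient sequence $0\to \Z/2\to \Q/\Z\xrightarrow{\cdot 2}\Q/\Z\to 0$, by $H^2_{nr}(\C(X)/\C,\Q/\Z)=\Br(X)$. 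For rationally connected $X$ the group $\Br(X)$ is finite (indeed $\Br(X)\cong H^3(X,\Z)_{\mathrm{tors}}$), so it suffices to know that $\Br(X)$ has no $2$-torsion; I would verify this from the explicit quadric-bundle model of $X$ used in the proof of Theorem~\ref{thm:unramified:coho}, where in fact one expects $\Br(X)=0$. Granting this, the given class in $H^3_{nr}(\C(X)/\C,\Z/2)$ maps to a nonzero class in $H^3_{nr}(\C(X)/\C,\Q/\Z)$.

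Combining the two inputs, $H^3_{nr}(\C(X)/\C,\Q/\Z)\neq 0$ and so, by \cite{CTV}, the integral Hodge conjecture for codimension two cycles fails for the smooth complex projective unirational $N$-fold $X$. As $N\geq 4$ was arbitrary, the corollary follows; for $N\geq 6$ this reproves the conclusion that Colliot-Th\'el\`ene and Voisin drew from the six-dimensional example of \cite{CTO}, while the cases $N=4$ and $N=5$, left open in \cite[Question~6.6]{CTV}, are new. The substantive work is entirely contained in Theorem~\ref{thm:unramified:coho}, which we are taking as given; the only genuine — and minor — obstacle in the present deduction is the coefficient bookkeeping above, namely checking that the mod-$2$ unramified obstruction is detected by the $\Q/\Z$-coefficient group to which the Colliot-Th\'el\`ene--Voisin isomorphism applies.
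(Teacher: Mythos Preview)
Your approach is exactly the paper's: apply Theorem~\ref{thm:unramified:coho} with $i=3$ and invoke \cite[Th\'eor\`eme 1.1]{CTV}. The only point you add is the passage from $\Z/2$ to $\Q/\Z$ coefficients, which the paper leaves implicit in the citation of \cite{CTV}.

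That passage, however, has a cleaner resolution than the one you propose. Working at the level of Galois cohomology of $K=\C(X)$, the kernel of $H^3(K,\Z/2)\to H^3(K,\Q/\Z)$ is the cokernel of multiplication by $2$ on $H^2(K,\Q/\Z)$, and this group is divisible by Merkurjev--Suslin (over $\C$ one has $H^2(K,\Q/\Z)\cong K_2^M(K)\otimes\Q/\Z$). Hence the coefficient map is injective already on Galois cohomology; since it is compatible with residues, it restricts to an injection $H^3_{nr}(\C(X)/\C,\Z/2)\hookrightarrow H^3_{nr}(\C(X)/\C,\Q/\Z)$. No computation of $\Br(X)$ is required. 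Your suggested route via ``$\Br(X)$ has no $2$-torsion'' would need a long exact sequence in \emph{unramified} cohomology that is not available a priori, and the verification you gesture at (``one expects $\Br(X)=0$'' for the explicit conic bundle) is neither carried out nor obviously true.
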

 
Note the the examples used in the above results are (weak) conic bundles and not hypersurfaces, see Section \ref{subsec:IHC:proof} below.
For instance, the four-dimensional example in Corollary \ref{cor:IHC} is a (weak) conic bundle over $\CP^3$.
  
  \subsection{Method}
Instead of degenerations to mildly singular varieties in characteristic two, used by Koll\'ar \cite{kollar} and Totaro \cite{totaro}, we use in this paper a degeneration to a highly singular hypersurface $Z\subset \CP^{N+1}$ (corresponding to $p\to 0$ in the equation in Section \ref{subsec:equations}). 
In fact, the singularities of $Z$ are so bad that the degeneration method of Voisin \cite{voisin} and Colliot-Th\'el\`ene--Pirutka \cite{CT-Pirutka} that has been used in \cite{totaro} does not seem to apply, see Remark \ref{rem:CH0} below.
Instead, Theorem \ref{thm:main:intro} is an application of the method that I have introduced in \cite{Sch1} and which generalizes \cite{voisin,CT-Pirutka} to degenerations where much more complicated singularities are allowed.

One important condition which the degeneration methods in \cite{voisin,CT-Pirutka} and \cite{Sch1} have in common is the existence of some specialization $Z$ of the varieties we are interested in, such that stable irrationality for $Z$ can be detected via some cohomological obstruction, e.g.\ via the existence of some nontrivial unramified cohomology class $\alpha\in H^n_{nr}(k(Z)/k,\Z/2)$, see \cite{CTO}. 
The key novelty of the strategy in \cite{Sch1} is however the observation that instead of a careful analysis of the singularities of $Z$, needed for the arguments in \cite{CT-Pirutka}, it suffices to check that the unramified class $\alpha$ restricts to zero on all exceptional divisors of a resolution of singularities of $Z$.
It is exactly this flexibility, that we will crucially exploit in this paper.

An additional difficulty arises in positive characteristic, where resolution of singularities is an open problem.
To be able to deal with such fields as well, we will develop in Section \ref{sec:degeneration} below an analogue of the method of \cite{Sch1} where one replaces a resolution of singularities of $Z$ by an alteration of suitable degree, which always exists by the work of de Jong and Gabber.
While the method from \cite{Sch1} can be adopted to alterations, it seems impossible to use alterations in the context of the original method of \cite{voisin,CT-Pirutka}. 

We will use a degeneration of a very general hypersurface of degree $d$ to a special hypersurface $Z\subset \CP^{N+1}$ of degree $d$ and multiplicity $d-2$ along an $r$-plane $P$.
Blowing up the $r$-plane, we get a (weak) $r$-fold quadric bundle $f:Y\to \CP^n$, cf.\ \cite[Section 3.5]{Sch1}, and we use that structure to produce a nontrivial unramified cohomology class $\alpha\in H^n_{nr}(k(Z)/k,\Z/2)$.
The first examples of quadric bundles with nontrivial unramified cohomology over $\CP^2$ (resp.\ $\CP^3$) and fibre dimension $r=1,2$ (resp.\ $r=3,\dots ,6$) have been constructed in \cite{artin-mumford,CTO}.
Recently, these results have been generalized to arbitrary $n,r\geq 1$ with $2^{n-1}-1\leq r\leq 2^n-2$ in \cite{Sch1}.

The main difficulties that we face are as follows.
Firstly, we need to find a nontrivial unramified cohomology class for a hypersurface $Z$ of small slope, while all previously known examples have large slopes, see \cite{Sch1}.
Secondly, the known methods from \cite{CTO,Sch1} do not seem to work in dimensions of the form $N=2^n+n-1$.
Finally, we have to arrange that $\alpha$ restricts to zero on all exceptional divisors of a resolution of $Z$, (or more generally on all subvarieties of an alteration of $Z$ that lie over the singular locus of $Z$). 
I have noticed before (cf.\ \cite{Sch1,Sch2}) that such a vanishing result is often automatically satisfied for all subvarieties that do not dominate $\CP^n$  and we prove a general such vanishing result in Theorem \ref{thm:vanishing} below. 
However, the key additional issue here is that $\alpha$ also has to restrict to zero on the (weak) $(r-1)$-fold quadric bundle $E\to \CP^n$ that we introduce in the blow-up $Y=Bl_PZ$ as exceptional divisor. 

In this paper we introduce a new construction method for quadric bundles with nontrivial unramified cohomology which circumvents all complications mentioned above at the same time.  
Our construction is inspired by 
 an example of a quadric surface bundle over $\CP^2$ that played a key role in the work of Hassett, Pirutka and Tschinkel \cite[Example 8]{HPT}. 
 An important step in the argument is a degeneration of the quadric bundle $Y=Bl_PZ$ to a bundle with a section, hence to a rational variety, which allows us to control the unramified cohomology of $Y$, see Section \ref{sec:non-vanishing} below.
Together with the initial degeneration to the singular hypersurface $Z$, this yields a double degeneration argument, which is the main technical innovation of the paper.

\section{Preliminaries}

\subsection{Conventions}
A variety is an integral separated scheme of finite type over a field.
For a scheme $X$, we denote its codimension one points by $X^{(1)}$.
A property holds for a very general point of a scheme if it holds at all closed points inside some countable intersection of open dense subsets.  
A quadric bundle is a flat projective morphism $f:Y\to S$ of varieties whose generic fibre is a smooth quadric; if we drop the flatness assumption, $Y$ is called a weak quadric bundle.

\subsection{Alterations} \label{subsec:alteration}
Let $Y$ be a variety over an algebraically closed field $k$.
An alteration of $Y$ is a proper generically finite surjective morphism $\tau:Y'\to Y$, where $Y'$ is a non-singular variety over $k$.
De Jong proved that alterations always exist, see \cite{deJo}. 
Later, Gabber showed that one can additionally require that $\deg(\tau)$ is prime to any given prime number $\ell$ which is invertible in $k$, see \cite[Theorem 2.1]{IT}.

\subsection{Galois cohomology and unramified cohomology}
Let $\ell$ be a prime and let $K$ be a field of characteristic different from $\ell$ which contains all $\ell$-th roots of unity.
We identify the Galois cohomology group $H^i(K,\Z/\ell)$ with the \'etale cohomology $H^i_{\text{\'et}}(\Spec K,\Z/\ell)$, where $\Z/\ell$ denotes the constant sheaf. 
We have $H^1(K,\Z/\ell)\cong K^\ast/(K^\ast)^\ell$ via Kummer theory.
Using this isomorphism, we denote by $(a_1,\dots ,a_i)\in H^i(K,\Z/\ell)$ the cup product of the classes $(a_j)\in H^1(K,\Z/\ell)$, represented by $a_j\in K^\ast$.
If $K$ has transcendence degree $d$ over an algebraically closed subfield $k\subset K$, then $H^i(K,\Z/\ell)=0$ for all $i>d$, see \cite[II.4.2]{serre}.

For any discrete valuation ring $A$ with residue field $\kappa$ and fraction field $K$,
 both of characteristic different from $\ell$, there is a residue map $$
 \del_A:H^i(K,\Z/\ell)\to H^{i-1}(\kappa,\Z/\ell) .
 $$ 
 This has the following property, see e.g.\ \cite[Lemma 9]{Sch1}.

\begin{lemma}\label{lem:residue}
In the above notation, suppose that $-1\in (K^\ast)^\ell$.
Let $\pi\in A$ be a uniformizer, $0\leq m\leq i$ be integers and let $a_1\dots ,a_{i}\in A^\ast$ be units in $A$. 
Then  
$$
\del_A(\pi a_1,\dots ,\pi a_m ,a_{m+1},\dots ,a_i)= \left( \sum_{j=1}^m (\overline a_1,\dots ,\widehat {\overline a_j},\dots ,\overline a_m) \right) \cup (\overline a_{m+1},\dots ,\overline a_i)  ,
$$
where $\overline a_j\in \kappa$ denotes the image of $a_j$ in $\kappa$, $(\overline a_1,\dots ,\widehat {\overline a_j},\dots ,\overline a_m)$ denotes the symbol where $\overline a_j$ is omitted, 
and where we use the convention that the above sum $\sum_{j=1}^m$ is one if $m=1$ and zero if $m=0$. 
\end{lemma}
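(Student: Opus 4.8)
## Proof Plan for Lemma \ref{lem:residue}

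The plan is to reduce the general formula to the two basic computations that underlie all residue computations in Galois cohomology: the residue of the tame symbol $\del_A(\pi, u) = (\overline u)$ for a unit $u$, and the residue of a cup product of units $\del_A(a_{m+1},\dots,a_i) = 0$. The hypothesis $-1 \in (K^\ast)^\ell$ is what makes the symbol $(a_1,\dots,a_i)$ genuinely symmetric (rather than merely graded-commutative) and makes $(\pi,\pi) = (\pi,-1) = 0$, so that we may freely permute entries and discard diagonal terms; I would record this reduction explicitly at the start.

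First I would recall the two structural properties of the residue map $\del_A$ that I will use as black boxes: (i) $\del_A$ is a morphism of graded modules over the subring of $H^\ast(K,\Z/\ell)$ generated by units, i.e.\ $\del_A(x \cup \beta) = \del_A(x) \cup \overline\beta$ whenever $\beta$ is a product of symbols in units of $A$ (since such $\beta$ lifts a class over $\kappa$), and (ii) the normalization $\del_A(\pi u_1,\dots,\pi u_m) $ can be computed by induction once we know the base cases. Concretely, by multilinearity in each slot, $(\pi a_1,\dots,\pi a_m, a_{m+1},\dots,a_i)$ expands as a sum of symbols each of which is a cup product of some copies of $(\pi)$ and some $(\overline{\text{unit}})$'s; using $(\pi,\pi) = (\pi,-1) = 0$ (valid since $-1$ is an $\ell$-th power) every term with two or more $\pi$-entries dies, leaving exactly the terms with zero or one factor of $(\pi)$.

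Next I would treat the two surviving types of term. The terms with no factor of $\pi$ assemble to the symbol $(a_1,\dots,a_m,a_{m+1},\dots,a_i)$ in units only; by property (i) its residue is $0$ (residue of a unit class is zero), so it contributes nothing. The terms with exactly one factor of $\pi$, after using the symmetry afforded by $-1 \in (K^\ast)^\ell$ to move the single $(\pi)$ to the front, take the form $(\pi) \cup (a_1,\dots,\widehat{a_j},\dots,a_m, a_{m+1},\dots,a_i)$ summed over $j = 1,\dots,m$; here $\widehat{a_j}$ means $a_j$ is omitted. By property (i) and the base computation $\del_A(\pi) = 1 \in H^0(\kappa,\Z/\ell) = \Z/\ell$, the residue of each such term is $(\overline a_1,\dots,\widehat{\overline a_j},\dots,\overline a_m,\overline a_{m+1},\dots,\overline a_i)$. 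Summing over $j$ and factoring out the common piece $(\overline a_{m+1},\dots,\overline a_i)$ gives exactly the claimed right-hand side, and the stated convention for $m = 0$ and $m = 1$ falls out: for $m = 0$ there are no $\pi$-terms so the residue vanishes, and for $m = 1$ there is a single term with $a_1$ omitted, giving $1 \cup (\overline a_2,\dots,\overline a_i)$.

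The main obstacle — or rather the only point requiring genuine care rather than bookkeeping — is justifying that every term with $\ge 2$ copies of $(\pi)$ vanishes and that the single-$\pi$ terms can be normalized to the displayed symmetric form; both hinge on $(\pi,\pi) = (\pi,-1) = 0$ and on graded-commutativity of cup product, and it is precisely here that the hypothesis $-1\in (K^\ast)^\ell$ is indispensable (without it one only gets $(\pi,\pi) = (\pi,-1)$, which need not vanish when $\ell = 2$). Everything else is a formal manipulation using multilinearity of symbols and the module structure of $\del_A$; since this is a standard lemma I would simply cite \cite[Lemma 9]{Sch1} for the full verification and present the argument above as the reason it holds.
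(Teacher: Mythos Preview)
Your proposal is correct and follows essentially the same route as the paper's proof: both expand $(\pi a_1,\dots,\pi a_m,a_{m+1},\dots,a_i)$ by multilinearity, use $(\pi,\pi)=0$ (deduced from the well-known relation $(\pi,-\pi)=0$ together with $-1\in (K^\ast)^\ell$) to kill every term with at least two factors of $\pi$, and then compute the residue of the surviving terms via the base cases $m=0,1$. The only cosmetic difference is that the paper records the expansion as the single displayed identity $\sum_{j=0}^m(a_1,\dots,\pi,\dots,a_m)\cup(a_{m+1},\dots,a_i)$ and cites \cite[Proposition 1.3]{CTO} for the base cases, whereas you cite \cite[Lemma 9]{Sch1}.
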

\begin{proof}
The cases $m=0,1$ follow from \cite[Proposition 1.3]{CTO}.
For $m\geq 2$, the lemma follows from
$$
(\pi a_1,\dots ,\pi a_m ,a_{m+1},\dots ,a_i)=\left( \sum_{j=0}^m (a_1,\dots ,a_{j-1},\pi,a_{j+1},\dots ,a_m) \right) \cup (a_{m+1},\dots ,a_i) ,
$$ 
where the summand for $j=0$ is understood to be $(a_1,\dots ,a_m)$.
The latter identity follows from $(\pi,\pi)=0$, which itself is a consequence of the well--known relation  $(\pi,-\pi)=0$ (see e.g.\ \cite[Lemma 2.2]{kerz}) and the assumption $-1\in (K^\ast)^\ell$.
\end{proof}

Assume now that $K=k(X)$ is the function field of a normal variety $X$ over a field $k$. 
The unramified cohomology group $H^i_{nr}(K/k,\Z/\ell)$ is the subgroup of $H^i(K,\Z/\ell)$ that consists of all elements $\alpha\in H^i(K,\Z/\ell)$ that have trivial residue at any geometric discrete rank one valuation on $K$ that is trivial on $k$.\footnote{
We follow the convention used in \cite{merkurjev}, which slightly differs from \cite{CTO}, where also non-geometric valuations are considered.
Both definitions coincide by \cite[Theorem 4.1.1]{CT} if $X$ is smooth and proper.} 
If $x\in X$ is a scheme point in the smooth locus of $X$, then any $\alpha\in H^i(K,\Z/\ell)$ that is unramified over $k$ comes from a class in $H^i_{\text{\'et}}(\Spec \mathcal O_{X,x},\Z/\ell)$ and so it can be restricted to yield a class $\alpha|_{x}\in H^i(\kappa(x),\Z/\ell)$, see \cite[Theorem 4.1.1]{CT}.
That is, any $\alpha \in H^i_{nr}(K/k,\Z/\ell)$ can be restricted to the generic point of any subvariety $Z\subset X$ which meets the smooth locus of $X$.

\subsection{Quadratic forms}
Let $K$ be a field of characteristic different from $2$.
For $c_i\in K^\ast$, we denote by $\langle c_0, c_1,\dots ,c_{r+1}\rangle$ the quadratic form $q=\sum c_iz_i^2$ over $K$.
The orthogonal sum (resp.\ tensor product) of two quadratic forms $q$ and $q'$ over $K$ will be denoted 
by $q\perp q'$ (resp.\ $q\otimes q'$).
We say that $q$ and $q'$ are similar, if there is some $\lambda\in K^*$ with $q\cong \lambda q':=\langle\lambda \rangle \otimes q'$.
For any field extension $F$ of $K$ and any quadratic form $q$ over $K$ such that $\{q=0\}$ is integral over $F$, we denote by $F(q)$ the function field of the projective quadric over $F$ that is defined by $\{q=0\}$.

A quadratic form over $K$ is called Pfister form, if it is isomorphic to the tensor product of forms $\langle 1,-a_i\rangle$ for $i=1,\dots ,n$, where $a_i\in K^\ast$; if $-1$ is a square in $K$, then we may ignore the sign. 
As usual, we denote this tensor product by $\langle\langle a_1,\dots,a_n\rangle\rangle$.
Isotropic Pfister forms are hyperbolic, see e.g.\ \cite[Theorem X.1.7]{lam2}.

The following result is due to the work of many people, including Arason, Elman, Lam, Knebusch and Voevodsky.

\begin{theorem} \label{thm:psi:anisotrpoic}
Let $K$ be a field with $\operatorname{char}(K)\neq 2$ and let $a_1,\dots ,a_n\in K^\ast$.
The Pfister form $\psi=\langle\langle a_1,\dots ,a_n\rangle\rangle$ is isotropic if and only if $(a_1,\dots ,a_n)=0\in H^n(K,\Z/2)$.
\end{theorem}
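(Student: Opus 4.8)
The plan is to derive the equivalence from three inputs: two elementary facts from the algebraic theory of quadratic forms, and one deep ingredient. Let $W(K)$ denote the Witt ring of $K$, let $I(K)\subset W(K)$ be its fundamental ideal, and let $I^m(K)$ be the $m$-th power of $I(K)$; recall that every $m$-fold Pfister form lies in $I^m(K)$. The elementary inputs are: (i) an isotropic Pfister form is hyperbolic, hence zero in $W(K)$ (already recalled above, see \cite[Theorem X.1.7]{lam2}); and (ii) the Arason--Pfister Hauptsatz, stating that an anisotropic quadratic form over $K$ whose Witt class lies in $I^m(K)$ is either zero in $W(K)$ or has dimension at least $2^m$ (see \cite{lam2}). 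The deep input is that the assignment $\langle\langle a_1,\dots,a_n\rangle\rangle\mapsto (a_1,\dots,a_n)$ extends to a well-defined isomorphism $e_n\colon I^n(K)/I^{n+1}(K)\cong H^n(K,\Z/2)$; the existence of such an $e_n$ as a homomorphism with the stated values is due to Arason and others in low degrees, while its bijectivity in general is the content of the Milnor conjecture on quadratic forms, established through the work of Voevodsky and of Orlov--Vishik--Voevodsky.

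Granting these, the forward implication is immediate. If $\psi=\langle\langle a_1,\dots,a_n\rangle\rangle$ is isotropic, then by (i) it is hyperbolic, so $\psi=0$ in $W(K)$; a fortiori the class of $\psi$ in $I^n(K)/I^{n+1}(K)$ is zero, and applying $e_n$ yields $(a_1,\dots,a_n)=e_n([\psi])=0$ in $H^n(K,\Z/2)$. Note that this direction uses only that $e_n$ is a well-defined homomorphism with the stated value on Pfister forms, and not its injectivity.

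For the converse, assume $(a_1,\dots,a_n)=0\in H^n(K,\Z/2)$. Then $e_n([\psi])=0$, so by the injectivity of $e_n$ the class $[\psi]$ vanishes in $I^n(K)/I^{n+1}(K)$; equivalently, $\psi$ lies in $I^{n+1}(K)$ as an element of $W(K)$. Suppose for contradiction that $\psi$ is anisotropic. Then $\psi$ is not hyperbolic, hence nonzero in $W(K)$, and the Arason--Pfister Hauptsatz applied with $m=n+1$ forces $\dim\psi\geq 2^{n+1}$, contradicting $\dim\psi=2^n$. Therefore $\psi$ is isotropic, which completes the proof.

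The only serious obstacle is the deep input: the injectivity of $e_n$, equivalently the bijectivity of the mod-$2$ norm residue homomorphism $K^M_n(K)/2\to H^n(K,\Z/2)$ together with Milnor's presentation of $I^n(K)/I^{n+1}(K)$ by Pfister forms. Everything else is standard quadratic form theory and can be quoted from \cite{lam2}. For $n\leq 2$ the isomorphism $e_n$ is classical (discriminant and, by Merkurjev, the Clifford invariant), and for $n\leq 3$ it is a theorem of Merkurjev--Suslin and of Rost, so in small degrees Voevodsky's machinery is not needed; we nevertheless record the statement in full generality, since the degrees $n$ occurring later in this paper are unbounded.
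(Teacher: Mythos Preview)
Your argument is correct. Both directions go through cleanly once one grants the isomorphism $e_n\colon I^n(K)/I^{n+1}(K)\to H^n(K,\Z/2)$, and the use of the Arason--Pfister Hauptsatz for the converse is a clean way to finish.

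The paper's route is different in packaging, and slightly more economical in its deep input. It cites Elman--Lam \cite[Main Theorem 3.2]{EL}, which already gives the \emph{full} equivalence at the level of Milnor $K$-theory mod~$2$: the Pfister form $\langle\langle a_1,\dots,a_n\rangle\rangle$ is isotropic (equivalently hyperbolic) if and only if the symbol $\{a_1,\dots,a_n\}$ vanishes in $K^M_n(K)/2$. This is elementary quadratic form theory and predates the Milnor conjecture. One then only needs Voevodsky's norm residue isomorphism $K^M_n(K)/2\cong H^n(K,\Z/2)$ \cite{Voe} to translate the statement to Galois cohomology. By contrast, your route invokes the isomorphism $I^n/I^{n+1}\cong H^n(K,\Z/2)$, which is the Orlov--Vishik--Voevodsky theorem and sits strictly on top of Voevodsky's result; and even your forward direction already needs the well-definedness of $e_n$ on $I^n$, which is not elementary in general (it is essentially equivalent to the injectivity of Milnor's map $K^M_n/2\to I^n/I^{n+1}$). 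What your approach buys is a self-contained argument inside the Witt ring, with the Hauptsatz doing the dimension-count work; what the paper's approach buys is that the quadratic-form side is handled entirely by the 1972 Elman--Lam theorem, so the only black box is the norm residue isomorphism itself.
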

\begin{proof}
The theorem follows from  \cite[Main Theorem 3.2]{EL} and Voevodsky's proof of the Milnor conjecture \cite{Voe}. 
\end{proof}

\begin{theorem} \label{thm:EL}
Let $K$ be a field with $\operatorname{char}(K)\neq 2$ and let $f:Q\to \Spec K$ be an integral projective quadric, defined by a quadratic form $q$ over $K$.
Let $a_1,\dots ,a_n\in K^\ast$ and consider $\alpha:=(a_1,\dots ,a_n)\in H^n(K,\Z/2)$.
Assume $\alpha\neq 0$.
Then the following are equivalent:  
\begin{enumerate}
\item $f^\ast \alpha=0 \in H^n(K(Q),\Z/2)$; \label{item:thm:EL:0}
\item the Pfister form $\psi:=\langle\langle a_1,\dots ,a_n\rangle\rangle$ becomes isotropic over $K(q)=K(Q)$; \label{item:thm:EL:1}
\item $q$ is similar to a subform of the Pfister form $\psi:=\langle\langle a_1,\dots ,a_n\rangle\rangle$. \label{item:thm:EL:2}
\end{enumerate}
\end{theorem}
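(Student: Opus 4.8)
The plan is to reduce everything to Theorem~\ref{thm:psi:anisotrpoic} together with the classical theory of Pfister forms, splitting the equivalence into \eqref{item:thm:EL:0}$\Leftrightarrow$\eqref{item:thm:EL:1} and \eqref{item:thm:EL:1}$\Leftrightarrow$\eqref{item:thm:EL:2}. Two observations are used throughout. First, since $\alpha\neq 0$, the Pfister form $\psi=\langle\langle a_1,\dots,a_n\rangle\rangle$ is anisotropic over $K$, by Theorem~\ref{thm:psi:anisotrpoic}. Second, by the convention fixed above, $K(Q)=K(q)$ is the function field of the integral projective quadric $\{q=0\}$, so that $f^\ast\alpha\in H^n(K(Q),\Z/2)$ is precisely the symbol $(a_1,\dots,a_n)$ computed over the field $K(q)$, i.e.\ the image of $\alpha$ under restriction along $K\subset K(q)$. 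Applying Theorem~\ref{thm:psi:anisotrpoic} over $K(q)$, which again has characteristic $\neq 2$, yields \eqref{item:thm:EL:0}$\Leftrightarrow$\eqref{item:thm:EL:1} at once: $f^\ast\alpha=0$ in $H^n(K(q),\Z/2)$ if and only if $\psi$ is isotropic over $K(q)$.

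For \eqref{item:thm:EL:2}$\Rightarrow$\eqref{item:thm:EL:1}, write $q\cong\lambda q'$ with $\psi\cong q'\perp q''$ for some form $q''$. Since $q$ is isotropic over its own function field $K(q)$, so is the similar form $q'$, and hence $\psi$, which contains $q'$ as a subform, is isotropic over $K(q)$. The reverse implication \eqref{item:thm:EL:1}$\Rightarrow$\eqref{item:thm:EL:2} is the substantial one. Assuming $\psi$ is isotropic over $K(q)$, it is in fact hyperbolic over $K(q)$, isotropic Pfister forms being hyperbolic. Moreover $q$ must be anisotropic over $K$: if $q$ were isotropic over $K$, then $K(q)$ would be a purely transcendental extension of $K$, so $\psi$ would already be hyperbolic over $K$, contradicting its anisotropy. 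I would then invoke the subform theorem --- in the form due to Cassels, Pfister, Arason, Elman, Lam and Knebusch, see \cite{EL} and \cite[Chapter~X]{lam2} --- which asserts that for an anisotropic Pfister form $\psi$ and an anisotropic form $q$, the form $\psi$ becomes hyperbolic over $K(q)$ if and only if $q$ is similar to a subform of $\psi$. This gives \eqref{item:thm:EL:2}.

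The step I expect to be the main obstacle is \eqref{item:thm:EL:1}$\Rightarrow$\eqref{item:thm:EL:2}: the other implications are either formal or a direct appeal to Theorem~\ref{thm:psi:anisotrpoic}, whereas this one rests on the subform theorem, whose proof ultimately goes through the Cassels--Pfister representation theorem and the Arason--Pfister Hauptsatz. One should also keep an eye on the degenerate cases --- $q$ isotropic, or $\dim q>2^n=\dim\psi$ --- but in each of them conditions \eqref{item:thm:EL:1} and \eqref{item:thm:EL:2} both fail (a subform of the anisotropic form $\psi$ is anisotropic of dimension at most $2^n$, and an anisotropic form remains anisotropic over the rational extension $K(q)/K$ attached to an isotropic $q$), so the asserted equivalence persists and no separate argument is needed.
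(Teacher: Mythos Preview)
Your proposal is correct and follows essentially the same approach as the paper: the equivalence \eqref{item:thm:EL:0}$\Leftrightarrow$\eqref{item:thm:EL:1} via Theorem~\ref{thm:psi:anisotrpoic} applied over $K(q)$, and \eqref{item:thm:EL:1}$\Leftrightarrow$\eqref{item:thm:EL:2} via the Arason--Knebusch subform theorem (the paper cites \cite[Corollary~X.4.9]{lam2}). Your treatment is slightly more explicit about the easy direction \eqref{item:thm:EL:2}$\Rightarrow$\eqref{item:thm:EL:1} and the degenerate case where $q$ is isotropic, but the structure is identical.
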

\begin{proof} 
The equivalence of (\ref{item:thm:EL:0}) and (\ref{item:thm:EL:1}) follows from Theorem \ref{thm:psi:anisotrpoic}.
Since $\alpha\neq 0$, $\psi$ is anisotropic over $K$ by Theorem \ref{thm:psi:anisotrpoic}.
The equivalence of (\ref{item:thm:EL:1}) and (\ref{item:thm:EL:2}) is thus a consequence of the subform theorem of Arason and Knebusch, see 
\cite[Corollary X.4.9]{lam2}.
\end{proof}

\subsection{Decompositions of the diagonal}
We say that a variety $X$ admits an integral decomposition of the diagonal, if $\Delta_X=[z\times X]+B$ in $\CH_{\dim(X)}(X\times X)$ for some zero-cycle $z\in \CH_0(X)$ of degree one and some cycle $B$ with $\supp(B)\subset X\times S$ for some closed algebraic subset $S\subsetneq X$.
Equivalently, $\delta_X=[z\times {k(X)}]$ in $\CH_0(X\times {k(X)})$, where $\delta_X$ is the class of the diagonal and $z\times {k(X)}$ is the base change of the zero-cycle $z$ to the function field $k(X)$.
Sometimes, we will also write $X_{k(X)}:=X\times {k(X)}$ and $z_{k(X)}:=z\times {k(X)}$ for the corresponding base changes. 

Recall that a variety $X$ is called retract rational, if there are nonempty open subsets $U\subset X$ and $V\subset \CP^N$, for some integer $N$, and morphisms $f:U\to V$ and $g:V\to U$ with $g\circ f=\id_U$.
It is known (and not hard to see) that stably rational varieties are retract rational.
We have the following lemma, which in the case where $X$ is smooth and proper is due to Colliot-Th\'el\`ene and Pirutka \cite[Lemme 1.5]{CT-Pirutka}.

\begin{lemma} \label{lem:dec}
Let $X$ be a proper variety over a field $k$.
If $X$ is retract rational (e.g.\ stably rational), then it admits an integral decomposition of the diagonal. 
\end{lemma}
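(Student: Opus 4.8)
The plan is to reduce the statement about retract rationality to the known case of smooth proper varieties by a standard alteration/resolution argument, so that we can invoke the Colliot-Th\'el\`ene--Pirutka result cited in the excerpt. First I would unwind the definition of retract rationality: we are given nonempty open subsets $U\subset X$ and $V\subset \CP^N$ together with morphisms $f:U\to V$ and $g:V\to U$ with $g\circ f=\id_U$. The key geometric observation is that retract rationality is a birational invariant, and even a stable birational invariant in the strong sense that it only depends on a neighborhood of a point; in particular, any proper variety birational to a retract rational one is again retract rational. So I would first pass to a projective (or at least proper) \emph{smooth} model: over a perfect field one can use resolution of singularities in the cases it is known, but to avoid characteristic restrictions the cleaner route is to observe that the diagonal decomposition is insensitive to replacing $X$ by a proper birational model.

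The main step is then purely formal. Assume $X$ is retract rational via $f:U\to V$, $g:V\to U$. Since $V\subset\CP^N$ is open and $\CP^N$ is (stably) rational, $V$ admits an integral decomposition of the diagonal — concretely, $\delta_{\CP^N}=[z\times k(\CP^N)]$ in $\CH_0(\CP^N_{k(\CP^N)})$ with $z$ a rational point, and this restricts to the analogous statement over $k(V)=k(\CP^N)$. Now $g$ induces a map on function fields $k(U)\hookrightarrow k(V)$ and $f$ a map $k(V)\hookrightarrow k(U)$ whose composite is the identity on $k(U)$; thus $k(U)$ is a retract of $k(V)$ as a $k$-algebra up to localization. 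The functoriality of $\CH_0$ of zero-cycles under the correspondences induced by $f$ and $g$ (pushforward and pullback of cycles, using properness where needed after compactifying) lets us transport the decomposition: applying $g_*$ to the decomposition for $V$ and then using $g\circ f=\id_U$ to see that the pullback-pushforward composite is the identity on $\CH_0(U_{k(U)})$, we obtain $\delta_U=[g(z)\times k(U)]$ in $\CH_0(U_{k(U)})$. Finally, since $U\subset X$ is open dense and $\CH_0$ of the generic fibre only sees the function field, $\delta_X=[g(z)\times k(X)]$ in $\CH_0(X_{k(X)})$, where $g(z)$ is a closed point of $U\subset X$ of degree one. By the equivalent formulation of an integral decomposition of the diagonal recalled just before the lemma, this is exactly what we want; the condition that $X$ be proper is used to know that a degree-one zero-cycle class over $k(X)$ spreads out to an honest decomposition $\Delta_X=[z\times X]+B$ with $B$ supported on $X\times S$, $S\subsetneq X$.

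The main obstacle I anticipate is making the cycle-theoretic transport rigorous without assuming $X$ (or $U$, $V$) proper at the intermediate stages: pullback of zero-cycles along $f:U\to V$ is not a priori defined on all of $\CH_0$ for non-flat, non-proper morphisms, so one has to either work with the correspondence homomorphisms on $\CH_0$ of the \emph{generic fibres} (where everything becomes a statement about zero-cycles over the field $k(X)$ and base-changed varieties, and functoriality is cleaner), or compactify $U$ and $V$ compatibly and track the error terms supported over proper closed subsets. The way I would handle this is to phrase the whole argument on generic fibres from the start: the hypothesis gives us a diagram of $k$-varieties which, after base change to $k(X)$, produces a closed point $P\in X_{k(X)}$ (the image under $g$ of the rational point of $V$) together with a rational equivalence $\delta_X\sim [P]$ on $X_{k(X)}$, obtained by pulling the universal rational equivalence on $\CP^N_{k(\CP^N)}$ back through $f$ and pushing through $g$ on the level of zero-cycles over the common function field. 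Once that rational equivalence over $k(X)$ is in hand, properness of $X$ (via the spreading-out / localization sequence for Chow groups) converts it into the cycle identity $\Delta_X=[z\times X]+B$ on $X\times X$, completing the proof. The novelty over \cite[Lemme 1.5]{CT-Pirutka} is precisely dropping smoothness of $X$, which costs nothing here since the decomposition of the diagonal in the stated form never used smoothness, only properness.
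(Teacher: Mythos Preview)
Your overall strategy---work over $K=k(X)$, use that $\CH_0(\CP^N_K)$ is generated by a $k$-point, and transport this to $\CH_0(X_K)$ via the retract structure---is exactly the paper's approach. However, you have correctly identified the main technical difficulty (defining the cycle-theoretic transport when $X$, hence $U$, may be singular) and then not actually resolved it. Saying ``phrase everything on generic fibres'' does not help: the problem of defining pullback of zero-cycles along $f:U\to V$ or along $g:V\to U$ persists at the generic-fibre level, because $U_K$ is still possibly singular and $g$ is not proper.

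The paper handles this as follows. One replaces $f$ and $g$ by the \emph{closures} of their graphs, $\Gamma_f\subset X\times\CP^N$ and $\Gamma_g\subset\CP^N\times X$. After base change to $K$, the second projections $q:\Gamma_f\times K\to\CP^N_K$ and $s:\Gamma_g\times K\to X_K$ are proper, so pushforward on $\CH_0$ is defined. For the pullback step one uses Fulton's refined Gysin homomorphism $r^{!}:\CH_0(\CP^N_K)\to\CH_0((\Gamma_g)_K)$ along $r:(\Gamma_g)_K\to\CP^N_K$; this exists precisely because $\CP^N$ is \emph{smooth}, so the graph of $r$ is regularly embedded (see \cite[Definition 8.1.2]{fulton}). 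One then checks directly that $s_*\circ r^{!}\circ q_*$ sends the diagonal point $\delta_{\Gamma_f}$ to $\delta_X$, while on the other hand it sends the generator $[z\times K]$ of $\CH_0(\CP^N_K)$ to $[g(z)\times K]$. This is the missing mechanism in your sketch.

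Your opening paragraph should be discarded: passing to a smooth proper birational model is neither available in positive characteristic nor needed, and your fallback claim that ``the diagonal decomposition is insensitive to replacing $X$ by a proper birational model'' is not obvious for singular $X$ (indeed, making decomposition-of-the-diagonal statements work without smoothness is the whole content of the lemma). The final sentence of your second paragraph also overstates things: from $\delta_U=[g(z)\times k(U)]$ in $\CH_0(U_{k(U)})$ one does not directly get the statement in $\CH_0(X_{k(X)})$ by ``$\CH_0$ of the generic fibre only sees the function field''; rather one needs properness and the localization sequence, which you only invoke at the very end.
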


\begin{proof}
Suppose that there are nonempty open subsets $U\subset X$ and $V\subset \CP^N$, for some integer $N$, and morphisms $f:U\to V$ and $g:V\to U$ with $g\circ f=\id_U$.
Let $\Gamma_f\subset X\times \CP^N$ and $\Gamma_g\subset \CP^N\times X$ be the closures of the graphs of $f$ and $g$, respectively.
Let $K:=k(X)$ be the function field of $X$ and consider the diagram
$$
\xymatrix{
& \ar_{p}[ld] \ar^{q}[rd] \Gamma_f \times K & & \ar_{r}[ld] \ar^{s}[rd] \Gamma_g \times K & \\
X\times K& & \CP^N\times K & & X\times K ,}
$$
where $p$, $q$, $r$ and $s$ denote the natural projections, respectively.
Since $\Gamma_f$ and $X$ are birational, $K=k(\Gamma_f)$ and so the diagonal of $\Gamma_f$ gives rise to a zero-cycle $\delta_{\Gamma_f}$ on $\Gamma_f \times K$.
Since $q$ and $s$ are proper, the pushforwards $q_\ast$ and $s_\ast$ are defined on the level of Chow groups. 
There is also a refined Gysin homomorphism $r^{!}:\CH_0(\CP^N_ K)\to \CH_0((\Gamma_g)_ K)$, defined as follows, see  \cite[Definition 8.1.2]{fulton}.
Since $\CP^N$ is smooth, the graph $\Gamma_r\subset (\Gamma_g)_K\times \CP^N_K $ is a regularly embedded closed subvariety. 
For a cycle $z$ on $\CP^N_K$, the cycle $r^{!}(z)$ is then defined as intersection of $\Gamma_r$ with $(\Gamma_g)_K\times z$ (viewed as a cycle on $\Gamma_r\cong (\Gamma_g)_ K$). 

We claim that
\begin{align} \label{eq:delta_Gamma_f}
s_\ast \circ r^{!} \circ q_\ast (\delta_{\Gamma_f})=\delta_X \in \CH_0(X\times K),
\end{align}
where $\delta_X$ is the class of  the $K$-point of $X\times K$ induced by the diagonal of $X$.
To see this, note that the pushforward $q_\ast ( \delta_{\Gamma_f}) \in \CH_0(\CP^N\times K)$ is represented by the generic point of the graph of $f$ inside $\CP^N\times X$.
In particular, $q_\ast (\delta_{\Gamma_f})$ lies inside the open subset $V\times K$ over which $r$ is an isomorphism.
Hence, $r^{!} \circ q_\ast (\delta_{\Gamma_f})$ is represented by the $K$-point of $\Gamma_g\times K$ that corresponds to the generic point of the graph of the rational map $X\dashrightarrow \Gamma_g$ induced by $f$. 
Hence, $ s_\ast \circ r^{!} \circ q_\ast (\delta_{\Gamma_f})$ corresponds to the generic point of the graph of the rational map $g\circ f:X\dashrightarrow X$, which is the diagonal, because $g\circ f=\id_U$.
We have thus proven that (\ref{eq:delta_Gamma_f}) holds, as we want.
(Note that all closed points considered above have residue field $K$ and the morphisms $q$, $r$ and $s$ induce isomorphisms between those residue fields, so no multiplicities show up in the above computations.)

On the other hand, $\CH_0(\CP^N\times K)\cong [z\times K]\cdot \Z$ is generated by the class of the $K$-point $z\times K$ for any $k$-point $z\in \CP^N_k$, and we may choose $z\in V$.
Since $q_\ast (\delta_{\Gamma_f})$ has degree one, we conclude
$q_\ast (\delta_{\Gamma_f})=[z\times K]$.
Since $r$ is an isomorphism above $V$, the $k$-point $z\in V$ gives rise to a unique $k$-point $z'\in \Gamma_g$ such that
$
r^{!} (q_\ast (\delta_{\Gamma_f}))=[z'\times K]$.
If $z'':=g(z')$ denotes the image of $z'$ in $X$, then we conclude
$$
s_\ast(r^{!} (q_\ast (\delta_{\Gamma_f})))=[z''\times K].
$$
The lemma then follows by comparing this with (\ref{eq:delta_Gamma_f}) above.
\end{proof}
 
 \subsection{Specializations}
We say that a variety $X$ over a field $L$ specializes (or degenerates) to a variety $Y$ over a field $k$, with $k$ algebraically closed, if there is a discrete valuation ring $R$ with residue field $k$ and fraction field $F$ with an injection of fields $F\hookrightarrow L$, such that the following holds.
There is a flat proper morphism $\mathcal X\longrightarrow \Spec R$, such that $Y$ is isomorphic to the special fibre $Y\cong \mathcal X\times k$ and  $X\cong \mathcal X\times L$ is isomorphic to the base change of the generic fibre $\mathcal X\times F$.
With this definition, we have for instance the following.  
Let $f:\mathcal X\to B$  be a flat proper morphism  of varieties over an algebraically closed uncountable field whose fibres $X_b:=f^{-1}(b)$ are integral.
Then the fibre $X_t$ over a very general point $t\in B$ degenerates to the fibre $X_0$ for any closed point $0\in B$,  
cf.\ \cite[\S 2.2]{Sch1}.

\section{Degeneration method} \label{sec:degeneration}

In previous degeneration methods \cite{voisin,CT-Pirutka,Sch1}, it was crucial that the special fibre $Y$ admits a resolution of singularities.
This leads to difficulties in positive characteristic, where resolutions of singularities are not known to exist in general.
In this Section we show that the method in \cite{Sch1} still works, if we replace resolutions by alterations $\tau:Y'\to Y$ of suitable degree, which exist in arbitrary characteristic by the work of de Jong and Gabber, see Section \ref{subsec:alteration} above. 
Here we have no control on the birational geometry of $Y'$; for instance, $Y'$ might be of general type and of positive geometric genus, even though $Y$ is rationally connected.
In particular, we cannot expect that $Y'$ admits a decomposition of the diagonal and so the method of \cite{voisin,CT-Pirutka} does a priori not work in this context.

\begin{proposition}\label{prop:degeneration}
Let $X$ be a proper geometrically integral variety over a field $L$ which degenerates to a proper variety $Y$ over an algebraically closed field $k$.
Let $\ell$ be a prime different from $\operatorname{char}(k)$ and let $\tau:Y'\to Y$ be an alteration whose degree is prime to $\ell$. 
Suppose that for some $i\geq 1$ there is a nontrivial class $\alpha \in H^i_{nr}(k(Y)/k,\Z/\ell)$ such that 
$$
(\tau^\ast\alpha)|_{E}=0\in H^i(k(E),\Z/\ell)\ \ \text{for any subvariety $E\subset \tau^{-1}(Y^{\sing})$.}
$$ 
Then $X$ does not admit an integral decomposition of the diagonal.
In particular, $X$ is not retract rational and hence not stably rational. 
\end{proposition}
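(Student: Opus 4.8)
The plan is to argue by contradiction, combining the specialization of decompositions of the diagonal with the obstruction coming from unramified cohomology, but carried out on an alteration rather than a resolution. First I would suppose that $X$ admits an integral decomposition of the diagonal. Since $X$ degenerates to $Y$, there is a DVR $R$ with residue field $k$ and fraction field $F\hookrightarrow L$ and a flat proper $\mathcal X\to\operatorname{Spec}R$ with generic fibre pulling back to $X$ and special fibre $Y$. A decomposition of the diagonal is a statement in $\operatorname{CH}_0(X_{k(X)})$, and by the standard specialization argument (as in \cite{voisin,CT-Pirutka}, using the fact that $\operatorname{CH}_0$ specializes and that the condition is insensitive to purely transcendental base field extensions) one deduces that $Y$ also admits an integral decomposition of the diagonal: $\delta_Y=[z\times k(Y)]$ in $\operatorname{CH}_0(Y_{k(Y)})$ for a zero-cycle $z$ of degree one on $Y$. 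Equivalently, $\Delta_Y=[z\times Y]+B$ in $\operatorname{CH}_{\dim Y}(Y\times Y)$ with $\operatorname{supp}(B)\subset Y\times S$ for a proper closed $S\subsetneq Y$; one may take $S$ to contain $Y^{\sing}$.

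The heart of the argument is then to show that the existence of such a decomposition forces $\alpha=0$, contradicting the hypothesis. The idea is to let the correspondence $\Delta_Y$ act on $\alpha$. Concretely, pull the decomposition back to the smooth alteration $Y'$: form $\tau\times\tau:Y'\times Y'\to Y\times Y$ and pull back the identity $\Delta_Y=[z\times Y]+B$; since $\tau$ has degree prime to $\ell$, after pushing forward one gets $\deg(\tau)\cdot(\text{diagonal-type class})$, and $\deg(\tau)$ is invertible on $\Z/\ell$-cohomology. Now use the action of correspondences on unramified cohomology with $\Z/\ell$ coefficients: a cycle supported in $Y'\times S'$, where $S'=\tau^{-1}(S)$, acts on $\alpha\in H^i_{nr}(k(Y)/k,\Z/\ell)$ through the restriction of $\tau^\ast\alpha$ to the components of $S'$. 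Here is exactly where the hypothesis enters: the components of $S'$ lying over $Y^{\sing}$ are precisely the subvarieties $E\subset Y'$ with $\tau(E)\subset Y^{\sing}$, and $(\tau^\ast\alpha)|_E=0$ by assumption; the other components of $S'$ can be absorbed because $\alpha$ is unramified and one only needs the action on a single class (one arranges $S\supset Y^{\sing}$ and uses that over the smooth locus the correspondence $[z\times Y]$ acts as zero on positive-degree cohomology while the diagonal acts as the identity). Putting this together: $\deg(\tau)\cdot\alpha$ equals the class coming from $[z\times Y']$ plus the class coming from $B'$, the first vanishes (it factors through $H^i(k,\Z/\ell)$-type data and $i\geq 1$, $k$ algebraically closed, so $H^i(\kappa(z),\Z/\ell)=0$... more carefully it is a constant class pulled back, which restricted to the generic point is zero since it is supported away from the generic point), and the second vanishes by the hypothesis on $E$. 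Since $\deg(\tau)$ is invertible mod $\ell$, $\alpha=0$, a contradiction.

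More precisely, for the correspondence action I would work with the generic point: restrict the identity $\tau^\ast\Delta_Y=\tau^\ast[z\times Y]+\tau^\ast B$ in $\operatorname{CH}_{\dim Y}(Y'\times Y')$ to $Y'\times\operatorname{Spec}k(Y)$ — equivalently base change to $k(Y)$ — obtaining an equality of zero-cycles on $Y'_{k(Y)}$, then pair with $\tau^\ast\alpha$ viewed as a class on $Y'_{k(Y)}$ via the pullback along $Y'_{k(Y)}\to Y'$ and evaluate via the degree map $\operatorname{CH}_0(Y'_{k(Y)})\to\Z$ composed with... actually cleanest is to use the machinery of \cite{Sch1} (Proposition there): a decomposition of the diagonal for $Y$, an alteration $\tau$ of degree prime to $\ell$, and the vanishing of $(\tau^\ast\alpha)|_E$ for all $E$ over $Y^{\sing}$ together imply $\alpha=0$. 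So the proof is essentially: specialize the decomposition of the diagonal from $X$ to $Y$ (the first paragraph), then invoke the $\ell$-adic / alteration version of the diagonal-pullback argument (the second and third paragraphs) to contradict nontriviality of $\alpha$.

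The main obstacle I anticipate is making rigorous the correspondence action of a cycle class in $\operatorname{CH}_{\dim Y}(Y'\times Y')$ on $\alpha\in H^i_{nr}(k(Y)/k,\Z/\ell)$ when $Y'$ is only smooth and $Y$ is badly singular: one must ensure that $\tau^\ast\alpha$, a priori only a class on the function field $k(Y')=k(Y)$, can be restricted to the generic points of the various components of $\tau^{-1}(S)$, and this needs the components to meet the smooth locus of $Y'$ — which is automatic since $Y'$ is smooth, so this is fine — together with a careful bookkeeping of which components lie over $Y^{\sing}$ versus which lie over $S\setminus Y^{\sing}$. For the latter one should note that over the smooth locus of $Y$ the standard argument of \cite{CT-Pirutka} already shows the $[z\times Y]$ and $B$ contributions kill $\alpha$; the genuinely new input, and the reason an alteration suffices where a resolution was needed before, is precisely that one never needs $Y'$ to carry a decomposition of its own diagonal — one only transports the decomposition of $\Delta_Y$ upstairs, uses $\deg(\tau)$ prime to $\ell$ to divide, and applies the hypothesis on the exceptional locus. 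I would also take care that the specialization step in the first paragraph does not require resolution of $Y$: it does not, since specialization of $\operatorname{CH}_0$ and of rational equivalences works for arbitrary flat proper families.
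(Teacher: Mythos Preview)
Your approach is correct and is essentially the same as the paper's: specialize the decomposition of the diagonal to $Y$, transport it to the smooth alteration $Y'$, and pair with $\tau^\ast\alpha$ using Merkurjev's bilinear pairing $\CH_0(Y'_K)\times H^i_{nr}(k(Y')/k,\Z/\ell)\to H^i(K,\Z/\ell)$ to derive the contradiction $\deg(\tau)\cdot\alpha=0$. The one point where the paper is sharper than your sketch is precisely the obstacle you anticipate: rather than attempting a pullback $(\tau\times\tau)^\ast$ on $Y\times Y$ (which is not defined, since $Y$ is singular) and then splitting $S'=\tau^{-1}(S)$ into pieces over $Y^{\sing}$ and over $S\setminus Y^{\sing}$, the paper first restricts $\delta_Y=[z_K]$ to the smooth locus $U\subset Y$, pulls back along the flat map $\tau|_{U'}:U'\to U$ (well-defined since $U$ is smooth), and then uses the localization sequence to extend to $\CH_0(Y'_K)$ at the cost of a zero-cycle $\tilde z$ supported entirely on $Y'_K\setminus U'_K$ --- so every closed point contributing to $\tilde z$ automatically lies over $Y^{\sing}$, and your hypothesis applies directly with no further case distinction.
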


Before we turn to the proof of the above result, let us remark the following.

\begin{remark} \label{rem:vanishing}
In many important examples, the vanishing condition in Proposition \ref{prop:degeneration} turns out to be automatically satisfied, see e.g.\ \cite{Sch1,Sch2} and Proposition \ref{prop:unramified-coho} below.
A quite general result in this direction is proved in Theorem \ref{thm:vanishing} of this paper, which makes it easy to apply the above proposition in many cases. 
\end{remark}

\begin{proof}[Proof of Proposition \ref{prop:degeneration}]
Replacing $X$ by its base change to the algebraic closure of $L$, we may assume that $L$ is algebraically closed.
By Lemma \ref{lem:dec}, $X$ admits an integral decomposition of the diagonal if it is retract rational or stably rational.
For a contradiction, we thus assume that $X$ admits an integral decomposition of the diagonal. 
Via the specialization homomorphism on Chow groups \cite[Section 20.3]{fulton}, we then conclude that there is a decomposition of the diagonal of $Y$.
We let $K=k(Y)$ be the function field of $Y$ and conclude that 
\begin{align} \label{eq:deltaY}
\delta_Y=[z_K]\in \CH_0(Y_K) , 
\end{align}
where $\delta_Y$ denotes the class of the diagonal  
and $z_K$ is the base change of a zero-cycle $z\in \CH_0(Y)$ of degree one.

Let $U\subset Y$ be the smooth locus of $Y$ and let $U':=\tau^{-1}(U)$.
We have the following commutative diagram
$$
\xymatrix{
U'_K \ar[d]^{\tau|_{U'}} \ar@{^{(}->}[r]^{j'} &Y'_K\ar[d]^{\tau}\\
U_K \ar@{^{(}->}[r]^j          &Y_K .}
$$
Since $j$ is flat, $j^\ast$ is defined on the level of Chow groups.
Since $U'_K$ and $U_K$ are smooth, $\tau|_{U'}^\ast$ is defined as well, see \cite[\S 8]{fulton}.
Applying $\tau|_{U'}^\ast\circ j^\ast$ to (\ref{eq:deltaY}), we thus get:
\begin{align} \label{eq:tau*deltaY}
\tau|_{U'}^\ast(j^\ast\delta_Y)=\tau|_{U'}^\ast(j^\ast[z_K])\in \CH_0(U'_K) .
\end{align}

We have $j^\ast [z_K]=[z''_K]$, where $z''_K$ denotes the base change of a zero-cycle $z''\in \CH_0(U)$ (not necessarily of degree one).
Let $z'\in \CH_0(U')$ be the pullback of $z''$ via the morphism $U'\to U$.
It then follows that 
\begin{align} \label{eq:tau*deltaY:2}
\tau|_{U'}^\ast(j^\ast[z_K])=[z'_K] \in \CH_0(U'_K) ,
\end{align}
where $z'_K=z'\times K$ denotes the base change of $z'$ to $K$.

Let $\Gamma_\tau\subset Y'\times Y$ be the graph of $\tau$. 
Let $\delta'_Y\in \CH_0(Y'_K)$ be the zero-cycle given by the generic point of $\Gamma_\tau$. 
Since $U'_K\to U_K$ is \'etale above a neighbourhood of the diagonal point, we find that 
\begin{align} \label{eq:tau*deltaY:3}
\delta'_Y|_{U'_K}=\tau|_{U'}^\ast(j^\ast\delta_Y) \in \CH_0(U'_K).
\end{align}
Applying the localization exact sequence \cite[Proposition 1.8]{fulton} to the inclusion $j':U'_K\hookrightarrow Y'_K$, we then conclude from (\ref{eq:tau*deltaY}), (\ref{eq:tau*deltaY:2}) and (\ref{eq:tau*deltaY:3}) that
\begin{align} \label{eq:deltaY:2}
\delta'_Y=[z'_K] + [\tilde z] \in \CH_0(Y'_K) , 
\end{align}
where $\tilde z$ is a zero-cycle on $Y'_K$ whose support is contained in $Y'_K\setminus U'_K$.

Recall that there is a bilinear pairing
$$
\CH_0(Y'_K)\times H^i_{nr}(k(Y')/k,\Z/\ell)\longrightarrow H^i(K,\Z/\ell),\ \ ([z],\beta)\mapsto \langle [z],\beta \rangle .
$$
If $z$ is a closed point of $Y'_K$, it is defined as follows.
Pulling back $\beta$ via $Y'\times Y\to Y'$ and noting that $k(Y'\times Y)=K(Y'_K)$ we obtain a class $\beta_K\in H^i(K(Y'_K),\Z/\ell)$ that is unramified over $k$ and hence also over the larger field $K$.
We may thus consider the restriction ${\beta_K}|_z\in H^i(\kappa(z),\Z/\ell)$ of $\beta_K$ 
to the closed point $z\in Y'_K$.
The  class $\langle [z],\beta \rangle \in H^i(K,\Z/\ell)$ is then given by pushing down $ {\beta_K}|_z$ via the finite morphism $\Spec \kappa (z) \to \Spec K$. 
Since $Y'$ is smooth and proper over $k$, this pairing descends from the level of cycles to Chow groups, see \cite[\S 2.4]{merkurjev}.

Let us now consider the class $\tau^\ast \alpha \in H^i_{nr}(k(Y')/k,\Z/\ell)$.
We aim to pair this class with $\delta_Y'$.
To this end, recall that the graph $\Gamma_\tau$ is isomorphic to $Y'$ and so the generic point of $\Gamma_\tau$, which represents $\delta_Y'$, has residue field $k(Y')$ and $\Spec k(Y')\to \Spec K$ is induced by $\tau$.
By the above description of the pairing, this implies that 
$$
\langle \delta_Y', \tau^\ast \alpha \rangle =\tau_\ast \tau^\ast \alpha=\deg(\tau) \alpha\in H^i(K,\Z/\ell) .
$$
This class is nonzero, because $\deg(\tau)$ is prime to $\ell$ and $\alpha \neq 0$.
On the other hand, using the decomposition of $\delta_Y'$ in (\ref{eq:deltaY:2}), we claim that
$$
\langle \delta_Y', \tau^\ast \alpha \rangle=\langle z'_K+\tilde z,\tau^\ast \alpha \rangle=0 ,
$$
which contradicts the previous computation, as we want.
To prove our claim, note that  $\langle z'_K,\tau^\ast \alpha \rangle=0$ because $z'_K$ is the base change of a zero-cycle $z'$ on $Y'$ and so this pairing factors through the restriction of $\tau^\ast \alpha$ to $z'\in \CH_0(Y')$, which vanishes because $H^i_{nr}(k/k,\Z/\ell)=0$ since $i\geq 1$ and $k$ is algebraically closed.
To see that $\langle \tilde z,\tau^\ast \alpha \rangle=0$, note that $\tilde z_K$ is supported on the complement of $U'_K$ in $Y'_K$ and so it suffices to see that $\langle y',\tau^\ast \alpha \rangle=0 $ for any closed point $y'\in Y'_K\setminus U'_K$.
The image of a closed point $y'\in Y'_K\setminus U'_K$ via $Y'_K\to Y'$ is the function field of a subvariety $Z'\subset Y'$ that is contained in $Y'\setminus U'$; that is, $Z'$ is a subvariety of $Y'$ that maps to the singular locus of $Y$.
The pairing  $\langle y',\tau^\ast \alpha \rangle$ factors through the restriction of $\tau^\ast \alpha$ to the function field of $Z'\subset Y'$ and so we conclude $\langle y',\tau^\ast \alpha \rangle =0$ because $(\tau^\ast \alpha) |_{Z'}=0$ by assumptions, as $Z'\subset \tau^{-1}( Y^{\sing})$. 
This proves the above claim, which finishes the proof of the proposition. 
\end{proof}

\begin{remark}
In the above notation, we may by \cite[Theorem 2.1]{IT} assume that the irreducible components of $Y'\setminus \tau^{-1}(U)$ are smooth. 
The injectivity property (see e.g.\ \cite[Theorems 3.8.1]{CT}) then implies that $\tau^\ast \alpha$ restricts to zero on  any subvariety $E\subset Y'$ that maps to $Y^{\sing}$, if and only if it restricts to zero on all components of $Y'\setminus \tau^{-1}(U)$.  
\end{remark}

\section{A special quadratic form} \label{sec:construction} 
Let $k$ be a field of characteristic different from $2$.
Let $n\geq 1$ be an integer and consider the function field $K:=k(\CP^n)$.
Let $x_0,\dots ,x_n$ be homogeneous coordinates on $\CP^n$.
For $i=1,\dots ,n$, we then consider the following rational function on $\CP^n$:
\begin{align} \label{def:ai}
a_i:=\frac{-x_i}{x_0}\in K^\ast.
\end{align}
Let $g\in k[x_0,\dots ,x_n]$ be a nontrivial homogeneous polynomial and put
\begin{align} \label{def:b}
b:=\frac{g}{x_0^{\deg(g)}}\in K^\ast .
\end{align}

We will always assume that $g$ satisfies the following two conditions.
Firstly,  
\begin{align}\label{eq:cond:codim}
\text{$g$ contains the monomial $x_i^{\deg(g)}$ nontrivially for all $i=0,\dots ,n$}.
\end{align} 
This condition is equivalent to asking that $g$ does not vanish at points of the form $[0:\dots:0:1:0:\dots :0]$, and hence not on any non-empty intersection of coordinate hyperplanes $\{x_{i_1}=x_{i_2}=\dots=x_{i_{c}}=0\} $.
Secondly,  we will assume that
\begin{align}\label{eq:cond:square}
\text{the image of $g$ in $k[x_0,\dots ,x_n]/(x_i)$ becomes a square for all $i=0,1,\dots ,n$.}
\end{align}

For $\epsilon=(\epsilon_1,\dots,\epsilon_n)\in \{0,1\}^n$, consider 
$$
c_\epsilon:=\prod_{i=1}^n x_i^{\epsilon_i} .
$$
Let further 
\begin{align} \label{eq:rho}
\rho:\{0,1,\dots , 2^n-1\}\stackrel{\sim}\longrightarrow \{0,1\}^{n}
\end{align}
be a bijection with $\rho(0)=(0,0,\dots ,0)$.
We put $c_i:=c_{\rho(i)}$ and $d_i:=\deg (c_i)$, and get $c_0=1$.

For $ r\leq 2^n-2$, we then define
\begin{align}
q:=\left\langle b,\frac{c_1}{x_0^{d_1}},\frac{c_2}{x_0^{d_2}},\dots ,\frac{c_{r+1}}{x_0^{d_{r+1}}} \right\rangle . \label{def:q} 
\end{align}
This quadratic form will play a key role in our arguments; it should be compared to the Pfister form
\begin{align} \label{def:psi}
\psi :=\langle\langle a_1,\dots , a_n \rangle\rangle =\left\langle 1,  \frac{c_1}{x_0^{d_1}},\frac{c_2}{x_0^{d_2}},\dots ,\frac{c_{2^n-1}}{x_0^{d_{2^n-1}}} \right\rangle . 
\end{align} 
By Theorem \ref{thm:psi:anisotrpoic}, the Pfister form $\psi$ is related to the class
\begin{align} \label{eq:alpha}
\alpha:=(a_1,\dots, a_n)\in H^n(K,\Z/2).
\end{align}

\begin{lemma}\label{lem:betaneq0}
We have $\alpha\neq 0\in H^n(K,\Z/2)$. 
\end{lemma}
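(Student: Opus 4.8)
The plan is to show that the symbol $\alpha = (a_1,\dots,a_n) = (-x_1/x_0,\dots,-x_n/x_0) \in H^n(K,\Z/2)$ is nonzero, where $K = k(\CP^n) = k(t_1,\dots,t_n)$ with $t_i = x_i/x_0$. Equivalently, by Theorem \ref{thm:psi:anisotrpoic}, it suffices to prove that the Pfister form $\psi = \langle\langle a_1,\dots,a_n\rangle\rangle$ is anisotropic over $K$. My approach is to specialize: $K$ is the fraction field of the rational function field $k(t_1,\dots,t_n)$, and we may repeatedly take residues (or, dually, evaluate) with respect to the successive variables.

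The cleanest route is by induction on $n$ using the following standard fact about Pfister forms over rational function fields: if $\varphi$ is an anisotropic form over a field $F$ and $t$ is an indeterminate, then $\varphi \perp \langle -t\rangle\otimes\varphi$ (equivalently $\langle\langle \ldots, t\rangle\rangle$ applied to $\varphi$, up to the usual $-1$-square bookkeeping afforded by the hypothesis that we are in a setting where signs can be ignored, cf.\ the conventions on Pfister forms in the excerpt) remains anisotropic over $F(t)$. This is a consequence of the theory of residue forms for discrete valuations on $F(t)$: an isotropic vector would have, after clearing denominators and reducing modulo $t$, a nontrivial zero of $\varphi$ or of $\varphi$ again, by looking at leading and lowest-order terms. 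So first I would set $F_0 = k$ and $F_j = k(t_1,\dots,t_j)$, note $\psi = \langle\langle -t_1,\dots,-t_n\rangle\rangle$ over $F_n = K$ (the sign is immaterial since over $K$ we are free to absorb it, or one simply carries $-t_i$ through), and then inductively: $\langle\langle -t_1,\dots,-t_j\rangle\rangle$ is anisotropic over $F_j$ because $\langle\langle -t_1,\dots,-t_{j-1}\rangle\rangle$ is anisotropic over $F_{j-1}$, hence over $F_{j-1}(t_j) = F_j$ the one-fold Pfister extension by the indeterminate $t_j$ stays anisotropic. The base case $n = 0$ (or $n=1$) is trivial: $\langle 1\rangle$, resp.\ $\langle\langle -t_1\rangle\rangle = \langle 1, t_1\rangle$, is anisotropic over $k$, resp.\ $k(t_1)$, since $t_1$ is not a square in $k(t_1)$ (and $-1$ handled as above, or simply: $1 + t_1 u^2 = 0$ has no solution by degree considerations).

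Concretely, the key lemma I need is: \emph{for any field $F$ of characteristic $\neq 2$, if $\varphi$ is an anisotropic quadratic form over $F$ and $u$ is an indeterminate, then $\varphi \perp u\varphi$ is anisotropic over $F(u)$.} This is classical (see e.g.\ Lam, \emph{Introduction to Quadratic Forms over Fields}, Ch.\ IX, on quadratic forms under rational function field extensions, or the "subform/transfer" arguments via the residue map $\del_{(u)}: W(F(u)) \to W(F)$ of the $u$-adic valuation). Granting it, the induction closes immediately, and then Theorem \ref{thm:psi:anisotrpoic} converts anisotropy of $\psi$ into $\alpha \neq 0$.

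The main obstacle is essentially bookkeeping rather than anything deep: one must be a little careful about the role of $-1$, since the $a_i = -x_i/x_0$ carry signs and the clean statement of the residue lemma is often phrased for $\langle\langle u\rangle\rangle = \langle 1,-u\rangle$. However, this is harmless here: either one observes that $\langle\langle a_1,\dots,a_n\rangle\rangle$ and $\langle\langle -a_1,\dots,-a_n\rangle\rangle$ need not agree but the anisotropy argument via successive $t_j$-adic residues works verbatim with the forms $\langle 1, \pm t_j\rangle$ applied at each stage (the residue of an isotropic form is isotropic, and $\langle 1, \pm t_j\rangle \otimes \varphi$ anisotropic over $F_{j-1}(t_j)$ whenever $\varphi$ is anisotropic over $F_{j-1}$), or one simply cites the version of Theorem \ref{thm:psi:anisotrpoic} that applies to the symbol directly and invokes the well-known fact that the symbol $(t_1,\dots,t_n)$ over $k(t_1,\dots,t_n)$ is nonzero — this is the generic symbol, and its nonvanishing is precisely what makes it the universal example. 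I would present the self-contained induction to keep the paper self-contained, flagging the sign issue explicitly at the one spot it arises.
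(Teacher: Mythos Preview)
Your argument is correct, but it takes a different route from the paper. The paper stays entirely on the Galois-cohomology side: it applies Lemma~\ref{lem:residue} to compute successive residues of $\alpha=(a_1,\dots,a_n)$ along the hyperplanes $\{x_n=0\},\{x_{n-1}=0\},\dots,\{x_2=0\}$ (each step has exactly one entry with nontrivial valuation, so only the $m=1$ case of Lemma~\ref{lem:residue} is needed, and no hypothesis on $-1$ is required), reducing to the statement that $a_1=-x_1/x_0$ is not a square on the line $\{x_2=\dots=x_n=0\}$. Your approach instead invokes Theorem~\ref{thm:psi:anisotrpoic} to translate the problem into anisotropy of the Pfister form $\psi=\bigotimes_i\langle 1,t_i\rangle$ over $k(t_1,\dots,t_n)$, and then proves that by the standard inductive specialization argument in the Witt ring. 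Both are residue arguments in spirit, and in fact mirror each other through the Milnor conjecture; but the paper's is more economical since it avoids passing through Theorem~\ref{thm:psi:anisotrpoic} (hence Voevodsky) altogether, while yours has the virtue of being phrased in the perhaps more familiar language of quadratic forms. Your worry about signs is not really an issue here: since $-a_i=x_i/x_0=t_i$, the form $\langle\langle a_1,\dots,a_n\rangle\rangle=\bigotimes_i\langle 1,-a_i\rangle=\bigotimes_i\langle 1,t_i\rangle$ already has the shape your induction wants, with no $-1$-square hypothesis needed.
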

\begin{proof} 
We use Lemma \ref{lem:residue} and take successive residues of $\alpha$ along $x_{n}=0$, $x_{n-1}=0$, $\dots$, $x_1=0$ to reduce the statement to the observation that $1\in H^0(k,\Z/2)$ is nonzero.
This proves $\alpha\neq 0$, as we want. 
\end{proof} 

\begin{example}  \label{ex:HPT}
If $n=2$, we may consider $g=x_0^2+x_1^2+x_2^2-2(x_0x_1+x_0x_2+x_1x_2)$, which defines a smooth conic $\{g=0\}\subset \CP^2$ that is tangent to the lines
$\{ x_i=0 \}$ for $i=0,1,2$.
In this case, conditions (\ref{eq:cond:codim}) and (\ref{eq:cond:square}) are satisfied.
For $r=2$, the corresponding quadratic form $q$ from (\ref{def:q}) coincides with the example of Hassett--Pirutka--Tschinkel in \cite[Example 8]{HPT}.
If $f:Q\to \Spec K$ denotes the corresponding projective quadric surface, then $f^\ast \alpha$ is nontrivial and unramified over $k$ by \cite[Proposition 11]{HPT}.
\end{example}

In the next section, we show that the projective quadric $f:Q\to \Spec K$ defined by $q$ in (\ref{def:q}) has always the property that $f^\ast\alpha$ is unramified over $k$, as long as (\ref{eq:cond:codim}) and (\ref{eq:cond:square}) hold.
We also prove that the vanishing condition needed for the degeneration method in Proposition \ref{prop:degeneration} is satisfied under these conditions.
Note however that conditions (\ref{eq:cond:codim}) and (\ref{eq:cond:square}) do not imply that $f^\ast \alpha$ is nontrivial.
In fact, since $\alpha\neq 0$, Theorem \ref{thm:EL} implies that $f^*\alpha$ is trivial if and only if $q$ is similar to a subform of $\psi$ (and this holds, for instance, when $b$ is a square).
If $r=2^n-2$, this last property is easily analysed:

\begin{lemma} \label{lem:bnotsquare}
Let $f:Q\to \Spec K$ be the projective quadric, defined by $q$ in (\ref{def:q}).
If $r=2^n-2$, then $f^\ast \alpha\neq 0$ if and only if $b$ is not a square in $K$.
\end{lemma}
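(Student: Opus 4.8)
The plan is to deduce the statement directly from Theorem~\ref{thm:EL}. By Lemma~\ref{lem:betaneq0} we have $\alpha\neq 0$, so Theorem~\ref{thm:EL} applies: condition~(\ref{item:thm:EL:0}), namely $f^\ast\alpha=0\in H^n(K(Q),\Z/2)$, is equivalent to condition~(\ref{item:thm:EL:2}), namely that $q$ is similar to a subform of the Pfister form $\psi=\langle\langle a_1,\dots,a_n\rangle\rangle$. So the task reduces to showing that, when $r=2^n-2$, the form $q$ is similar to a subform of $\psi$ if and only if $b$ is a square in $K$; the lemma then follows by negating both sides.

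First I would record the dimension coincidence: for $r=2^n-2$ both $q$ (see~(\ref{def:q})) and $\psi$ (see~(\ref{def:psi})) are non-degenerate of dimension $r+2=2^n$. Since $\alpha\neq 0$, the form $\psi$ is anisotropic by Theorem~\ref{thm:psi:anisotrpoic}, hence so is every subform of $\psi$; in particular a subform of $\psi$ of full dimension $2^n$ must be $\psi$ itself. Therefore ``$q$ is similar to a subform of $\psi$'' is equivalent to the existence of $\lambda\in K^\ast$ with $q\cong\lambda\psi$.

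If $b\in(K^\ast)^2$, then $\langle b\rangle\cong\langle 1\rangle$, and comparing~(\ref{def:q}) with~(\ref{def:psi}) gives $q\cong\psi$; in particular $q$ is similar to a subform of $\psi$, so $f^\ast\alpha=0$. For the converse, assume $q\cong\lambda\psi$ for some $\lambda\in K^\ast$, and compare determinants in $K^\ast/(K^\ast)^2$. Writing $q':=\langle \frac{c_1}{x_0^{d_1}},\dots,\frac{c_{2^n-1}}{x_0^{d_{2^n-1}}}\rangle$, we have $q=\langle b\rangle\perp q'$ and $\psi=\langle 1\rangle\perp q'$, so $\det q=b\cdot\det\psi$. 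On the other hand $\dim\psi=2^n$ is even, so $\det(\lambda\psi)=\lambda^{2^n}\det\psi=\det\psi$. Hence $b\cdot\det\psi=\det\psi$ in $K^\ast/(K^\ast)^2$, i.e.\ $b$ is a square in $K$, as desired.

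The computations involved are entirely routine; the only point needing a moment's attention is the passage from ``$q$ is similar to a subform of $\psi$'' to ``$q$ is similar to $\psi$'', which relies on the equality $\dim q=\dim\psi$ together with the anisotropy of $\psi$. Note also that the determinant argument works uniformly in $n\geq 1$ and does not require $-1$ to be a square in $K$.
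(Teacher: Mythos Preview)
Your argument is correct. The overall strategy matches the paper's---reduce via Theorem~\ref{thm:EL} and Lemma~\ref{lem:betaneq0} to the question of whether $q$ is similar to a subform of $\psi$, then use $\dim q=\dim\psi$ to turn this into ``$q\cong\lambda\psi$ for some $\lambda$''---but the endgame differs. The paper invokes the fact that $q$ and $\psi$ represent a common element together with \cite[Theorem X.1.8]{lam2} (roundness of Pfister forms) to upgrade similarity to an actual isometry $q\cong\psi$, and then applies Witt cancellation to the common summand $q'$ to conclude $\langle b\rangle\cong\langle 1\rangle$. You instead read off the conclusion directly from the signed determinant, using that $\dim\psi=2^n$ is even so $\det(\lambda\psi)=\det\psi$ in $K^\ast/(K^\ast)^2$. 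Your route is a bit more self-contained: it avoids the reference to Lam and the explicit appeal to Witt cancellation, at the cost of not isolating the cleaner intermediate statement $q\cong\psi$. One small remark: the anisotropy of $\psi$ that you mention is not actually needed to conclude that a subform of full dimension equals $\psi$; that is immediate from dimension counting alone.
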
 
\begin{proof}
Since $r=2^n-2$, $q$ and $\psi$ have the same dimension.
Since $q$ and $\psi$ represent a common element, $q$ is similar to a subform of $\psi$ if and only if $q\cong \psi$ (see \cite[Theorem X.1.8]{lam2}) and this is by Witt's cancellation theorem equivalent to $b$ being a square in $K$. 
The lemma follows thus from Theorem \ref{thm:EL}, because $\alpha \neq 0$ by Lemma \ref{lem:betaneq0}.
\end{proof}

For $r< 2^n-2$, the question whether $q$ is similar to a subform of $\psi$ is quite subtle and so it is in general hard to decide whether $f^\ast  \alpha$ is nontrivial. 
For special choices of $g$, this problem will be settled later in Section \ref{sec:non-vanishing} below.

\section{A vanishing result}
\label{sec:vanishing}

\begin{proposition} \label{prop:unramified-coho}
Let $k$ be an algebraically closed field of characteristic different from $2$.
Let $n,r\geq 1$ be positive integers with $r\leq 2^n-2$. 
Let $f:Y\to \CP^n$ be a surjective morphism of proper varieties over $k$ 
whose generic fibre is birational to the quadric 
over $K=k(\CP^n)$ given by $q$ in (\ref{def:q}).
Assume that (\ref{eq:cond:codim}) and (\ref{eq:cond:square}) hold.
Then, 
\begin{enumerate}
\item $
f^\ast\alpha\in H^n_{nr}(k(Y)\slash k,\Z\slash 2),
$
where $\alpha\in H^n(K,\Z\slash 2)$ is from (\ref{eq:alpha}); \label{item:prop:unramified-coho:1}
\item 
for any dominant generically finite morphism $\tau:Y'\to Y$ of varieties and for any subvariety $E\subset Y'$ which meets the smooth locus of $Y'$ and which does not dominate $\CP^n$ via $f\circ \tau$, we have  \label{item:prop:unramified-coho:2}
$$
(\tau^\ast f^\ast\alpha)|_E=0\in H^n(k(E),\Z/2).
$$ 
\end{enumerate}
\end{proposition}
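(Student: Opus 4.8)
The plan is to verify the two assertions by a residue computation that exploits the coordinate-hyperplane structure of the quadratic form $q$. For part (\ref{item:prop:unramified-coho:1}), since $f^\ast\alpha$ lives on $Y$ whose generic fibre is a quadric over $K$, I would first reduce to checking triviality of residues at divisorial valuations of $k(Y)$ that are trivial on $k$. A valuation $v$ on $k(Y)$ either restricts to a nontrivial valuation on $K=k(\CP^n)$ or restricts trivially on $K$. In the latter case $v$ comes from a divisor on the generic fibre $Q_K$, and by Theorem \ref{thm:EL} combined with Theorem \ref{thm:psi:anisotrpoic} the class $f^\ast\alpha$ is in the image of $H^n(K,\Z/2)\to H^n(k(Q),\Z/2)$, which is unramified with respect to all such ``vertical'' valuations by the standard argument (a class pulled back from the base is unramified along divisors of the fibre). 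So the real work is the divisorial valuations $v$ whose restriction to $K$ is a divisorial valuation $w$ on $\CP^n$; by a specialization/blow-up argument one reduces to the case where the centre of $w$ on $\CP^n$ is a prime divisor $D\subset \CP^n$. I would then split into two cases: if $D$ is not one of the coordinate hyperplanes $\{x_i=0\}$, then all the $a_i$ and $b$ are units at the generic point of $D$, so $\alpha$ itself is unramified at $w$ and hence $f^\ast\alpha$ is unramified at $v$; if $D=\{x_i=0\}$ for some $i$, I apply Lemma \ref{lem:residue}. Here condition (\ref{eq:cond:square}) is exactly what is needed: $\partial_w\alpha$ is a symbol in the residue field $k(D)$ built from the images of the $a_j$ and, crucially, using that $b=g/x_0^{\deg g}$ restricted to $D$ is a square (times a unit), one shows the residue of $f^\ast\alpha$ can be computed after passing to $k(Q_D)$ where it vanishes because $q$ itself becomes (similar to a subform of) a Pfister form over $k(D)$ whose associated symbol is the residue $\partial_w\alpha$. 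The point is that the relation $b\equiv \square$ on each coordinate hyperplane forces $q|_D$ to ``absorb'' the residue, killing $\partial_v(f^\ast\alpha)$.

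For part (\ref{item:prop:unramified-coho:2}), let $E\subset Y'$ be a subvariety meeting the smooth locus of $Y'$ that does not dominate $\CP^n$. Then the image $W:=\overline{f(\tau(E))}\subsetneq \CP^n$ is a proper closed subvariety, and since it does not dominate $\CP^n$ it is contained in some hypersurface. The restriction $(\tau^\ast f^\ast\alpha)|_E$ factors through the restriction of $\alpha$ to a field containing $k(W)$; more precisely, it lies in the image of $H^n(k(W'),\Z/2)$ for some variety $W'$ mapping to $W$, pulled back via the composite $E\to W$. The key observation, which I would isolate as a separate sublemma, is that condition (\ref{eq:cond:codim}) guarantees that $W$ is \emph{not} contained in the zero locus of $b=g/x_0^{\deg g}$ when $W$ is a (nonempty intersection of) coordinate hyperplanes — but more importantly, condition (\ref{eq:cond:square}) guarantees that when $W$ meets a coordinate hyperplane the symbol $\alpha$ restricted to $k(W)$ becomes trivial after at most a quadratic twist absorbed by $q$. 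The cleanest route: since $W$ is a proper subvariety of $\CP^n$, after iterating residues (via Lemma \ref{lem:residue}) along a chain of divisors ending at $W$, one reduces to showing $\alpha$ restricted to $k(W)$ is a symbol of length $<n$ or is trivial; but $\dim W\le n-1$, and by the standard cohomological dimension bound stated in the excerpt (the vanishing $H^i(F,\Z/\ell)=0$ for $i>\operatorname{trdeg}_k F$), combined with genericity of the reduction, one gets $\alpha|_{k(W)}=0$ when $\dim W<n$ in the generic position; the subtlety is when $W$ has dimension exactly $n-1$, i.e. $W$ is a divisor, where one uses that $E$ factors through the quadric over $k(W)$ and applies the vanishing criterion of Theorem \ref{thm:EL} together with condition (\ref{eq:cond:square}) to conclude $q$ over $k(W)$ is similar to a subform of $\psi_{k(W)}$, so $f^\ast\alpha|_E=0$.

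I expect the main obstacle to be the bookkeeping in part (\ref{item:prop:unramified-coho:1}) for divisorial valuations of $k(Y)$ lying over coordinate hyperplanes of $\CP^n$: one must carefully track how condition (\ref{eq:cond:square}) interacts with the residue formula of Lemma \ref{lem:residue}, because the form $q$ has leading coefficient $b$ (not $1$ as in $\psi$), so $f^\ast\alpha$ is \emph{not} simply pulled back from a Pfister quadric, and the triviality of the residue is a genuine consequence of the square condition rather than a formal fact. A secondary difficulty is handling valuations of $k(Y)$ that are neither purely ``horizontal'' nor ``vertical'' with respect to $f$; the standard trick here is to replace $Y$ by a suitable blow-up and use functoriality of residues, reducing to the divisorial case on a model where the valuation is divisorial and its centre maps to a divisor or to a point of $\CP^n$, the latter case being handled by iterating the hyperplane analysis. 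Throughout, the fact that $k$ is algebraically closed (so $-1$ is a square and $H^i_{nr}(k/k,\Z/2)=0$) is used to apply Lemma \ref{lem:residue} and to kill residual symbols at the bottom of the valuation chain.
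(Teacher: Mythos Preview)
Your outline has the right overall shape, but there are two genuine gaps that would prevent the argument from going through.

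\medskip

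\textbf{1. The reduction to divisors on $\CP^n$ is invalid.} You write that ``by a specialization/blow-up argument one reduces to the case where the centre of $w$ on $\CP^n$ is a prime divisor $D\subset\CP^n$''. This is not possible: a geometric discrete valuation $w$ on $K=k(\CP^n)$ can have centre of \emph{any} codimension on $\CP^n$. One can only arrange that the centre becomes a divisor on some normal birational model $S\to\CP^n$, and then one must still analyse what happens on $\CP^n$ itself. The paper does exactly this: it passes to such a model $S$, lets $c$ be the number of coordinate hyperplanes containing the image $f(E)$, and splits into two cases according to whether $\dim f(E)=n-c$ (the image is a full intersection of coordinate hyperplanes) or $\dim f(E)<n-c$. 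Your dichotomy ``$D$ a coordinate hyperplane of $\CP^n$ vs.\ not'' is the special case $c\le 1$ and misses the essential higher-codimension situation.

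The paper's Case~2 ($\dim f(E)<n-c$) is where the real work happens and is absent from your sketch. After a symbol manipulation using $(a,-a)=0$ to relabel so that exactly $x_1,\dots,x_c$ vanish at the image and $x_0$ does not, one computes $\partial_x\alpha=\gamma_1\cup\gamma_2$ with $\gamma_2\in H^{n-c}(\kappa(f(y)),\Z/2)$ pulled back from a field of transcendence degree $<n-c$; hence $\gamma_2=0$ by cohomological dimension. This gives $\partial_x\alpha=0$, whence both the residue and the restriction of $f^\ast\alpha$ vanish. Your proposed ``iterating the hyperplane analysis'' does not capture this mechanism.

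\medskip

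\textbf{2. The coordinate-hyperplane case needs completions, not restriction.} In your Case ``$D=\{x_i=0\}$'' you propose to restrict $q$ to $k(D)$ and argue it becomes a subform of a Pfister form there. But several of the $c_j$ vanish identically on $D$, so $q|_D$ is degenerate and the argument breaks. The paper instead passes to the completion $\hat K$ of $K$ at $x$: condition (\ref{eq:cond:codim}) ensures $g(f(y))\ne 0$, so by (\ref{eq:cond:square}) the image of $b$ in $\kappa(x)$ is a \emph{nonzero} square, and Hensel's lemma lifts this to $b\in(\hat K^\ast)^2$. Over $\hat K$ the form $q$ is still nondegenerate and now visibly a subform of $\psi$, so Theorem~\ref{thm:EL} kills the image of $\alpha$ in $H^n(\hat K(q),\Z/2)$, and hence in $H^n(\hat L,\Z/2)$. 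Since both the residue $\partial_y(f^\ast\alpha)$ and the restriction $(f^\ast\alpha)|_E$ factor through this last group, they vanish simultaneously. Your description also conflates the degree-$n$ class $\alpha$ with its degree-$(n-1)$ residue $\partial_w\alpha$; it is the former that must die over $\hat K(q)$, not the latter over $k(D)(q|_D)$.
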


We will prove in Theorem \ref{thm:vanishing} below that (in a much more general setting) item (\ref{item:prop:unramified-coho:1}) in Proposition \ref{prop:unramified-coho}, i.e.\ the fact that $f^\ast\alpha $ is unramified, implies the vanishing in item (\ref{item:prop:unramified-coho:2}).
For sake of simplicity, we prefer not to invoke this general result in the following, but rely on a direct argument which uses the explicit description of the quadratic form $q$.

\begin{proof}[Proof of Proposition \ref{prop:unramified-coho}]  
Recall first that if (\ref{item:prop:unramified-coho:1}) holds, then $\tau^\ast f^\ast \alpha\in H^n_{nr}(k(Y')/k,\Z/2)$ (by functoriality of unramified cohomology)
and so the restriction $(\tau^\ast f^\ast\alpha)|_E$ in item (\ref{item:prop:unramified-coho:2}) is defined by \cite[Theorem 4.1.1(b)]{CT}. 
Assuming (\ref{item:prop:unramified-coho:1}), we claim that it suffices to prove (\ref{item:prop:unramified-coho:2}) in the case where $E$ is a divisor.
To see this, let $E\subset Y'$ be a subvariety which does not dominate $\CP^n$ and which meets the smooth locus of $Y'$.
By our conventions, $E$ is integral and so it is smooth at the generic point (because $k$ is algebraically closed).
This implies that the exceptional divisor of the blow-up $Bl_EY'$ has a unique component which dominates $E$ and this component is birational to $E\times \CP^{s}$, where $s=\dim(Y)-\dim(E)-1$.
Moreover, $Bl_EY$ is smooth at the generic point of this particular component.
Since $H^n(k(E),\Z/2)\to H^n(k(E\times \CP^s),\Z/2)$ is injective, replacing $Y'$ by $Bl_EY'$ thus shows that it suffices to prove (\ref{item:prop:unramified-coho:2}) in the case where $E\subset Y'$ is a divisor that does not dominate $\CP^n$.

By \cite[Proposition 1.7]{merkurjev}, we may up to birational modifications assume that $\tau(E)$ is a divisor on $Y$.
In order to prove that $(\tau^\ast f^\ast\alpha)|_E$ vanishes, it thus suffices to show that $f^\ast \alpha$ restricts to zero on the generic point of any prime divisor $E\subset Y$ with $f(E)\subsetneq \CP^n$.

Next, we claim that in order to prove item (\ref{item:prop:unramified-coho:1}), it suffices to show that $f^\ast\alpha$ has trivial residue at the generic point of any prime divisor $E\subset Y$ that does not dominate $\CP^n$.
To see this, let $\nu$ be a geometric discrete rank one valuation on $K(Q)$ that is trivial on $k$.
By \cite[Proposition 1.7]{merkurjev}, there is a normal variety $\widetilde Y$ and a dominant  birational morphism $\widetilde Y\to Y$ such that $\nu$ corresponds to a prime divisor on $\widetilde Y$.
Replacing $Y$ by $\widetilde Y$, we may thus assume that $\nu$ corresponds to a prime divisor $E$ on $Y$.
We denote its generic point by $y\in Y^{(1)}$.
If $E$ dominates $\CP^n$, then the residue at $y$ vanishes by Lemma \ref{lem:residue}: $\del_y(f^\ast \alpha)=0$.  
It thus suffices to treat the case where $f(E)\subsetneq \CP^n$, as claimed.

As we have seen above, in order to prove the proposition, it suffices to show 
\begin{align} \label{eq:delyf*alpha=0}
\del_y(f^\ast \alpha)=0,
\end{align}
and
\begin{align} \label{eq:f*alpha|E=0}
(f^\ast \alpha)|_E=0 \in H^n(k(E),\Z/2) ,
\end{align}
where $y\in Y^{(1)}$ denotes the generic point of a prime divisor $E\subset Y$ with $f(E)\subsetneq \CP^n$.

We will prove (\ref{eq:delyf*alpha=0}) and (\ref{eq:f*alpha|E=0}) simultaneously.
To begin with, we choose a normal projective variety $S$ with a birational morphism $S\to \CP^n$ such that $y\in  Y$ maps to a codimension one point $x\in S^{(1)}$ on $S$, 
cf.\ \cite[Propositions 1.4 and 1.7]{merkurjev} and \cite[Lemma 29]{Sch1}.

Let $c\geq 0$ be the maximal natural number such that $f(E)$ lies on the intersection of $c$ coordinate hyperplanes, that is, such that there are integers $0\leq i_1<i_2<\cdots < i_c \leq n$ with $x_{i_j}(f(y))=0$ for all $j=1,\dots ,c$. 
The proof proceeds now via two cases.

\textbf{Case 1.} The image $f(E)\subset \CP^n$ has dimension $\dim(f(E))=n-c$.

We first show that (\ref{eq:delyf*alpha=0}) and (\ref{eq:f*alpha|E=0}) follow from a different statement, that will be easier to check in this case.
To this end, consider the local rings $B:=\mathcal O_{Y,y}$ and $A:=\mathcal O_{S,x}$.
Let further $\hat A$ and $\hat B$ be the completions of $A$ and $B$, respectively, and let $\hat K=\Frac(\hat A)$ and $\hat L=\Frac(\hat B)$ be the corresponding fraction fields.
Since the generic fibre of $f:Y\to \CP^n$ is birational to the quadric defined by $q$ from (\ref{def:q}), inclusion of fields induces 
a sequence
\begin{align} \label{seq:K,Khat,Lhat}
H^n(K,\Z/2)\stackrel{\varphi_1}\longrightarrow H^n(\hat K,\Z/2)\stackrel{\varphi_2}\longrightarrow H^n(\hat K(q),\Z/2)\stackrel{\varphi_3}\longrightarrow H^n(\hat L,\Z/2) ,
\end{align} 
where we use that $\{q=0\}$ is integral over $\widehat K$ because $r\geq 1$.
The residue of $f^\ast \alpha$ at $y$ factors through the image of $\alpha$ in $H^n(\hat L,\Z/2)$ via the above sequence, see e.g.\ \cite[p.\ 143]{CTO}.
Moreover, if $\del_y(f^\ast \alpha)=0$, then $f^\ast \alpha \in H^n_{\text{\'et}}(\Spec \hat B,\Z/2)\subset H^n(\hat L,\Z/2) $ (see e.g.\ \cite[\S 3.3 and \S 3.8 ]{CT}) and so the restriction of $f^\ast \alpha$ to $E$ factors through the image of $\alpha$ in $H^n(\hat L,\Z/2)$ via the above sequence as well.
Hence, in order to prove (\ref{eq:delyf*alpha=0}) and (\ref{eq:f*alpha|E=0}), it suffices to establish  
\begin{align} \label{eq:phi1phi2phi3}
\varphi_3\circ \varphi_2\circ \varphi_1(\alpha)=0 \in H^n(\hat L,\Z/2) .
\end{align} 

Since we are in case 1, $\codim_{\CP^n}(f(E))=c$.
This implies $c\geq 1$ because $f(E)\subsetneq \CP^n$.
By the definition of $c$, $f(y)$ lies on the intersection of $c$ coordinate hyperplanes. 
The assumption $\codim_{\CP^n}(f(E))=c$ thus implies that $f(y)$ is the generic point of $\{x_{i_{1}}=\dots =x_{i_c}=0\}$ for some $0\leq i_1<i_2<\cdots < i_c \leq n$ and so 
 condition (\ref{eq:cond:codim}) implies 
\begin{align} \label{eq:gtildefynot0}
g(f(y))\neq 0 .
\end{align} 

There is some $j\in \{0,\dots ,n\}$ such that $x_j(f(y))\neq 0$.
Moreover, condition (\ref{eq:cond:square}) implies that $\deg(g)$ is even and so $b$ coincides with $b':=\frac{g}{x_j^{\deg(g)}}$ up to squares.
Since $c\geq 1$, 
condition (\ref{eq:cond:square}) implies that $\overline {b'}$ is a square in $\kappa(x)$.
By (\ref{eq:gtildefynot0}), it is in fact a nontrivial square and so Hensel's lemma implies that $b'$ (and hence also $b$) becomes a square in the field extension $\hat K$ of $K$, considered above.
Hence, over the field $\hat K$, $q$ becomes isomorphic to a subform of $\psi = \left\langle \left\langle a_1,\dots ,a_n\right\rangle \right\rangle $. 
By Theorem \ref{thm:EL}, we thus get
$$
\varphi_2(\varphi_1(\alpha))=0 .
$$  
Therefore, (\ref{eq:phi1phi2phi3}) holds and this implies (as we have seen above) (\ref{eq:delyf*alpha=0}) and (\ref{eq:f*alpha|E=0}).

\textbf{Case 2.} The image $f(E)\subset \CP^n$ has dimension $\dim(f(E))<n-c$.

In this case, consider the birational morphism $S\to \CP^n$ and think about $\alpha$ as a class on the generic point of $S$.
We aim to show 
\begin{align} \label{eq:del_xalpha}
\del_x\alpha=0 .
\end{align}
This will be enough to conclude (\ref{eq:delyf*alpha=0}) and (\ref{eq:f*alpha|E=0}), for the following reasons.
If  $\del_x\alpha=0$, then $f^\ast \alpha$ has trivial residue at $y$ (see e.g.\ \cite[p.\ 143]{CTO}) and so (\ref{eq:delyf*alpha=0}) holds.
Moreover, since $\del_x\alpha=0$, $(f^\ast\alpha)|_E$ can be computed by first restricting $\alpha$ to $\kappa(x)$ and then pulling it back to $k(E)$.
This implies $(f^\ast\alpha)|_E=0$ because  $H^n(\kappa(x),\Z/2)=0$ , since  $x\in S$ is a point of dimension $n-1$ over the algebraically closed ground field $k$. 

It thus remains to prove (\ref{eq:del_xalpha}).
To this end, we choose some $j\in \{0,\dots ,n \}$ such that $x_j(f(y))\neq 0$.
Multiplying each $a_i$ by the square of $x_0/x_j$, we get
$$
\alpha= \left( \frac{-x_0x_1}{x_j^2},\frac{-x_0x_2}{x_j^2},\dots, \frac{-x_0}{x_j} , \dots ,\frac{-x_0x_n}{x_j^2}\right)  .
$$
It is well-known that $(a,-a)=0$ for all $a\in K^\ast$ (see e.g.\ \cite[Lemma 2.2]{kerz}).
Applying this to $a=x_0/x_j$, the above identity yields:
$$
\alpha= \left( \frac{-x_1}{x_j},\frac{-x_2}{x_j},\dots, \frac{-x_0}{x_j},\dots  ,\frac{-x_n}{x_j} \right) .
$$
Hence, up to relabelling, we may assume that $j=0$ and so $x_0$ does not vanish at $f(y)$.
Up to relabelling further, we may also assume that $x_i(f(y))=0$ for $i=1,\dots ,c$ and $x_i(f(y))\neq 0$ for $i\geq c+1$.

If $c=0$, then (\ref{eq:del_xalpha}) is clear by Lemma \ref{lem:residue}.
If $c\geq 1$, Lemma \ref{lem:residue} implies that
$$
\del_x(\alpha)=\gamma_1\cup \gamma_2 ,
$$
with  
$
\gamma_2=(\overline {a_{c+1}},\dots ,\overline  {a_n}) \in H^{n-c}(\kappa(x),\Z/2) 
$,
where $\overline {a_i}$ for $i> c$ denotes the restriction of $a_i=\frac{x_i}{x_0}$ 
to $x$ (this works because $x_0$ and $x_i$ for $i>c$ do not vanish at $x$).
In particular, $\gamma_2$ is a pullback of a class from $H^{n-c}(\kappa( f(y)),\Z/2) $.
Hence, $\gamma_2=0$ because $k$ is algebraically closed and so the cohomological dimension of $\kappa( f(y))$ is less than $n-c$, as we are in case 2. 
This proves that (\ref{eq:del_xalpha}) holds, which finishes the proof in case 2. 

This concludes the proof of Proposition \ref{prop:unramified-coho}. 
\end{proof}

\section{A non-vanishing result} \label{sec:non-vanishing}

In this section we aim to construct examples of homogeneous polynomials $g$ that satisfy the conditions (\ref{eq:cond:codim}) and (\ref{eq:cond:square}) from Section \ref{sec:construction} in such a way that the unramified class $f^\ast \alpha$ from Proposition \ref{prop:unramified-coho} is nontrivial.

Let $k$ be a field of positive transcendence degree over its prime field $F\subset k$.
That is, there is some element $t\in k$ that is algebraically independent over $F$ and so $F(t)\subset k$. 

Let $n\geq 1$ be an integer and let $G\in F[x_0,\dots ,x_n]$ be a homogeneous polynomial of degree $\lceil\frac{ n+1}{2}\rceil$, which contains the monomial $x_i^{\lceil\frac{ n+1}{2}\rceil}$ nontrivially for all $i=0,\dots ,n$ (e.g.\ $G=\sum_{i=0}^n x_i^{\lceil\frac{ n+1}{2}\rceil}$).
We then consider
\begin{align} \label{eq:g:t}
g:=t^2G^2-x_0^{\epsilon}\cdot \prod_{i=0}^nx_i \ \ \text{and}\ \ b:=\frac{g}{x_0^{2 \lceil\frac{ n+1}{2}\rceil}} ,
\end{align}
where $\epsilon=0$ if $n+1$ is even and $\epsilon=1$ otherwise.
Since $g$ is a square modulo $x_i$ for all $0\leq i\leq n$, (\ref{eq:cond:square}) holds.
Since $G$ contains $x_i^{\lceil\frac{ n+1}{2}\rceil}$ nontrivially for all $i=0,\dots ,n$, condition (\ref{eq:cond:codim}) holds as well.

Let $f:Q\to \Spec K$ be the projective quadric defined by $q$ from (\ref{def:q}), where $b$ is as in (\ref{eq:g:t}) above.
Specializing $t\to 0$ shows that for any choice of $G$, the rational function $b$ is not a square in $k(\CP^n)$.
Hence, $f^\ast \alpha\neq 0$ if $r=2^n-2$, by Lemma \ref{lem:bnotsquare}.
If $r<2^n-2$, then this statement is in general not true any longer.
Indeed, 
$\langle b\rangle\subset \langle 1,\frac{-x_1\cdots x_n}{x_0^n}\rangle$ and so $q$ is a subform of $\psi$ if $-1$ is a square in $k$ and the monomial $x_1\cdots x_n$ is among the $c_i$ with $i=r+2,\dots ,2^n-1$ and the latter implies $f^\ast\alpha=0$ by Theorem \ref{thm:EL}.  
For what follows, it is therefore essential to assume 
that the bijection $\rho$ from (\ref{eq:rho}) is chosen in such a way that $\rho(1)=(1,1,\dots ,1)$ and so the following holds:  
\begin{align}
c_1=x_1\cdots x_n. \label{eq:c1:Constr3}  
\end{align}

\begin{proposition}\label{prop:cohononvanishing:Constr3}
Let $n,r\geq 1$ 
be integers with $r\leq 2^n-2$.
Let $k$ be a field of characteristic different from $2$ and of positive transcendence degree over its prime field $F$.
Let $K=k(\CP^n)$.
Let $f:Q\to \Spec K$ be the projective quadric defined by the quadratic form $q$ from (\ref{def:q}), where $g$ is as in (\ref{eq:g:t}).
Assume that (\ref{eq:c1:Constr3}) holds.
Then, $f^\ast \alpha \neq 0\in H^n(K(Q),\Z/2)$, where $\alpha =(a_1,\dots ,a_n)$. 
\end{proposition}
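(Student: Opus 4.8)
The plan is to invoke Theorem~\ref{thm:EL}. Since $\alpha\neq 0$ by Lemma~\ref{lem:betaneq0}, we have $f^\ast\alpha\neq 0$ if and only if the Pfister form $\psi=\langle\langle a_1,\dots,a_n\rangle\rangle$ stays anisotropic over $K(Q)$, and this is what I would prove. The point is that the scalar $b$ in (\ref{eq:g:t}) was designed so that, after reduction modulo the transcendental $t$ occurring in it, the form $q$ picks up a hyperbolic plane --- making the associated quadric reduce to a smooth \emph{isotropic} (hence rational) quadric --- whereas $\psi$, being defined over the ``constant'' part, reduces to an anisotropic Pfister form; and an anisotropic form cannot become isotropic over a purely transcendental extension.

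So I would first set up a valuation. Write $y_i:=x_i/x_0$, so $K=k(y_1,\dots,y_n)$ and $a_i=-y_i$. Using $\operatorname{trdeg}(k/F)\geq 1$, fix $t\in k$ transcendental over $F$ (this is the $t$ of (\ref{eq:g:t})), extend the $t$-adic valuation of $F(t)$ to a rank-one valuation $v_1$ on $k$ (necessarily trivial on the prime field $F$), and let $v$ be its Gauss extension to $K$. Then $v(y_i)=0$ and the residue field of $v$ is $\overline K=\kappa(\overline y_1,\dots,\overline y_n)$, with the $\overline y_i$ algebraically independent over the residue field $\kappa\supseteq F$ of $v_1$. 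Unravelling (\ref{eq:g:t}) gives $b=t^2\widetilde G^2-y_1\cdots y_n$ with $\widetilde G:=G(1,y_1,\dots,y_n)\neq 0$; since the coefficients of $\widetilde G$ lie in $F$ we get $v(\widetilde G)=0$, hence $v(t^2\widetilde G^2)=2v_1(t)>0$, so $v(b)=0$ and $\overline b=-\overline y_1\cdots\overline y_n$. By (\ref{eq:c1:Constr3}) the second entry of $q$ is $c_1/x_0^{d_1}=y_1\cdots y_n$, and all remaining entries $c_j/x_0^{d_j}$ are monomials in the $y_i$; thus $q$ has good reduction, and with $w:=\overline y_1\cdots\overline y_n\in\overline K^\ast$ its residue form is $\overline q\cong\langle -w,w\rangle\perp\langle\overline{c_2/x_0^{d_2}},\dots,\overline{c_{r+1}/x_0^{d_{r+1}}}\rangle$, which is nondegenerate (all entries nonzero) and isotropic (it contains the hyperbolic plane $\langle -w,w\rangle$).

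To conclude (say $r\geq 1$; $r=0$ is a simpler variant, see below), note that $\overline q$ then defines a smooth isotropic quadric $\overline Q$ over $\overline K$ of dimension $r\geq 1$, so $\overline Q$ is integral and $\overline K(\overline Q)$ is a purely transcendental extension of $\overline K$. As $q$ has good reduction, $v$ extends to a valuation $\widetilde v$ on $K(Q)$ with residue field $\overline K(\overline Q)$, namely the one given by the generic point of the special fibre of the smooth model $\{q=0\}\subseteq\CP^{r+1}_{\mathcal O_v}$. Now suppose, for a contradiction, that $\psi$ becomes isotropic over $K(Q)$. Since the entries of $\psi$ are monomials in the $y_i$, hence $\widetilde v$-units, $\psi$ has good reduction with respect to $\widetilde v$, and reducing an isotropic vector (scaled to be primitive) shows that $\overline\psi=\langle\langle -\overline y_1,\dots,-\overline y_n\rangle\rangle$ is isotropic over $\overline K(\overline Q)$. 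But $\overline\psi$ is anisotropic over $\overline K$ --- apply Lemma~\ref{lem:betaneq0} over the field $\kappa$ and then Theorem~\ref{thm:psi:anisotrpoic} --- hence anisotropic over the purely transcendental extension $\overline K(\overline Q)$, a contradiction. Therefore $\psi$ stays anisotropic over $K(Q)$ and $f^\ast\alpha\neq 0$ by Theorem~\ref{thm:EL}. For $r=0$ the same scheme applies once one checks, again using (\ref{eq:c1:Constr3}), that $Q$ is integral and that $v$ extends to $K(Q)$ with residue field $\overline K$, over which $\overline\psi$ is anisotropic.

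The main obstacle is the valuation-theoretic bookkeeping of the second and third paragraphs: one needs a valuation on $K$ that is \emph{nontrivial on the constant field $k$} --- so that $t$ acquires positive value while everything with coefficients in the prime field $F$ (in particular $G$, hence $\widetilde G$) stays a unit --- and one must check it extends to $K(Q)$ with the expected residue field, which requires a little care when $v$ is not discrete (e.g.\ when $k$ is not finitely generated over $F$). The conceptual heart --- the reason (\ref{eq:g:t}) works --- is the identity $b=t^2\widetilde G^2-y_1\cdots y_n$, reducing $t$-adically to $-y_1\cdots y_n$; together with the entry $c_1/x_0^{d_1}=y_1\cdots y_n$ of $q$, which is exactly why the normalisation (\ref{eq:c1:Constr3}) was imposed, this creates the hyperbolic plane in $\overline q$, while $\overline\psi$ survives as an anisotropic form.
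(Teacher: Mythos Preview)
Your proof is correct and rests on the same idea as the paper's: specialize $t\to 0$ so that $b$ reduces to $-c_1/x_0^{d_1}$, forcing $\overline q$ to contain a hyperbolic plane (this is exactly why the normalisation~(\ref{eq:c1:Constr3}) is there), while $\psi$, defined over the ``constant'' part, reduces to an anisotropic Pfister form over the residue field and hence stays anisotropic over the purely transcendental extension $\overline K(\overline Q)$.

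The packaging differs. The paper first descends to a finitely generated intermediate field $F(t)\subset F(t)'\subset k$, spreads $F(t)'$ out to a normal $F$-variety $W$ with a map to $\Spec F[t]$, and then works with the discrete valuation ring $B=\mathcal O_{\mathcal Q,\eta_0}$ at the generic point of the fibre over $t=0$; the specialization step is phrased in \'etale cohomology via the injection $H^n_{\text{\'et}}(\Spec B,\Z/2)\hookrightarrow H^n(\Frac B,\Z/2)$ and the injectivity of pullback to the function field of a rational variety. You instead extend the $t$-adic valuation directly to $k$ (possibly to a non-discrete valuation), Gauss-extend to $K$, and push to $K(Q)$, phrasing the specialization step in the language of quadratic forms with good reduction. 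Your route avoids the descent to a finitely generated subfield at the cost of handling general valuations; the paper's route keeps everything over DVRs, which makes the residue-field identification immediate. The one point you flag as needing ``a little care'' --- that some extension $\widetilde v$ of $v$ to $K(Q)$ has residue field exactly $\overline K(\overline Q)$ --- does go through: writing $K(Q)=K(w_1,\dots,w_r)(\sqrt\gamma)$ with $\gamma$ a $v'$-unit, the reduction $\overline\gamma$ is \emph{not} a square in $\kappa(v')$ because the projection of $\overline Q$ from the point $[0:\cdots:0:1]$ (which does not lie on $\overline Q$, since $\overline{u_{r+1}}\neq 0$) is generically $2$-to-$1$; hence $[\kappa(\widetilde v):\kappa(v')]=2$ and $\kappa(\widetilde v)=\overline K(\overline Q)$ as you claim.
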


\begin{proof}
The idea is to specialize $t\to 0$.
Under this specialization, $b$ specializes to $-c_1/x_0^{n}$ and so $q$ becomes isotropic.
The specialization of the class $f^\ast \alpha$ is thus nonzero, as it is given by the pullback of a nonzero class (see Lemma \ref{lem:betaneq0}) via a purely transcendental field extension.
But then $f^\ast \alpha$ must be nonzero itself.
We give the details of this argument in what follows.

For a contradiction, we assume $f^\ast \alpha=0$. 
It follows for instance from Theorem \ref{thm:EL} that $\alpha$ maps to zero in $H^n(F(t)'(\CP^n)(q),\Z/2)$ for some finitely generated field extension $F(t)'$ of $F(t)$ with $F(t)'\subset k$.
We choose a normal $F$-variety $W$ which admits a surjective morphism $W\to \Spec F[t]$ such that $F(W)=F(t)'$ and $F(t)\hookrightarrow F(W)$ corresponds to the natural inclusion.
Since $W\to \Spec F[t]$ is surjective and $W$ is normal, we may after shrinking $W$ assume that there is a Cartier prime divisor $W_0\subset W$ which maps to the origin in $\mathbb A^1_F=\Spec F[t]$.


For $0\leq i\leq r+1$, we put $c'_i:=x_0^{2\lceil \frac{n+1}{2}\rceil-d_i}c_i$, where $d_i=\deg(c_i)$.
The quadratic form
$$
\varphi:=\langle g,c'_1,c'_2,\dots ,c'_{r+1} \rangle
$$
defines a subscheme
$$
\mathcal Q:=\{\varphi=0\}\subset  \CP^{r+1}_{W} \times _{W} \mathbb P^n_{W} 
$$
over $W$. 
The fibre $\mathcal Q_{F(W_0)}$ of  $\mathcal Q\to W$ above the generic point of $W_0\subset W$ is isomorphic to the quadric over $F(W_0)$ that is defined by the reduction $q_0$ of $q$ modulo $t$.
Since $q_0$ has full rank, $\mathcal Q_{F(W_0)}$ is smooth over $F(W_0)$.
Let $\eta_0\in \mathcal Q$ be the generic point of $\mathcal Q_{F(W_0)}\subset \mathcal Q$. 
Since the fibre $\mathcal Q_{F(W_0)}$ is reduced, the closure $\overline{\{\eta_0\}}\subset \mathcal Q$ of $\eta_0$ is an irreducible component of the pullback of the Cartier divisor $W_0\subset W$ via the natural map $\mathcal Q\to W$.
Hence, $\overline{\{\eta_0\}}$ is generically Cartier and so 
 $\eta_0$ is a smooth codimension one point of $\mathcal Q$.  
The local ring $B:=\mathcal O_{\mathcal Q, \eta_0}$ is therefore a discrete valuation ring with fraction field $F(W)(\CP^n)(q)$ and residue field $F(W_0)(\CP^n)(q_0)$. 

Since $a_i\in B^\ast$ for all $i$, $\alpha$ gives rise to a class in $H^n_{\text{\'et}}(\Spec B,\Z/2)$.
This class vanishes by assumptions, because $F(W)=F(t)'$ and restriction to the generic point yields an injection 
$
H^n_{\text{\'et}}(\Spec B,\Z/2)\hookrightarrow H^n(F(W)(\CP^n)(q),\Z/2) ,
$
see e.g.\ \cite[\S 3.6]{CT}.
Restricting that class to the closed point of $\Spec B$ then shows that $\alpha$ maps to zero in $H^n(F(W_0)(\CP^n)(q_0),\Z/2)$.
By (\ref{eq:g:t}) and (\ref{eq:c1:Constr3}), $g$ and $-c_1'$  coincide modulo $t$ and so $q_0$ is isotropic over $F(W_0)(\CP^n)$.
Hence, $H^n(F(W_0)(\CP^n),\Z/2)\to H^n(F(W_0)(\CP^n)(q_0),\Z/2)$ is injective, see e.g.\ \cite[Proposition 4.1.4]{CT}, and so $\alpha$ vanishes in $H^n(F(W_0)(\CP^n),\Z/2)$, which contradicts Lemma \ref{lem:betaneq0}.
This concludes the proposition.
\end{proof}
 
\begin{remark}\label{rem:Q}
If $k$ has characteristic zero, then for any prime $p\geq 3$, Proposition \ref{prop:cohononvanishing:Constr3} holds for the integral polynomial $g=p^2\cdot \left( \sum_{i=0}^nx_i^{\lceil \frac{n+1}{2}\rceil} \right) ^2 + x_0^\epsilon x_0x_1\cdots x_n$.
The proof is essentially the same as the one presented above, where we replace the  characteristic zero degeneration $t\to 0$, by a degeneration to characteristic $p$. 
\end{remark}

\begin{remark}
In the above proof, we specialized $b$ to $\frac{-c_1}{x_0^{d_1}}$.  
Since $q$ becomes isotropic under our specialization, the class $f^\ast \alpha$ from Proposition \ref{prop:unramified-coho} does not stay unramified (because $H^n_{nr}(K(Q)/K,\Z/2)=0$ if $Q$ is rational over $K$).
This is coherent with the observation that in our specialization, the condition (\ref{eq:cond:codim}) that has been used in an essential way in the proof of Proposition \ref{prop:unramified-coho} is heavily violated, because we specialized $g$ to  $-x_0^\epsilon x_0\cdots x_n$.
\end{remark}

\section{A hypersurface singular along an $r$-plane} \label{sec:hypersurface}

In this section we work over an algebraically closed field $k$ of characteristic different from $2$ and put $K=k(\CP^n)$.
Let $n\geq 1$ and $1\leq r\leq 2^n-2$, and let $e_0,\dots ,e_{r+1}\in k[x_0,\dots ,x_n]$ be homogeneous polynomials of degrees $\deg(e_0)=d$ and $\deg(e_i)=d-2$ for $i\geq 1$.
Assume further that the following three conditions are satisfied, where $q$ and $\psi$ are as in (\ref{def:q}) and (\ref{def:psi}), and $g$ satisfies (\ref{eq:cond:codim}) and (\ref{eq:cond:square}).
Firstly, 
\begin{align} \label{con0:coprime}
\text{$\operatorname{gcd}(e_0,\dots ,e_{r+1})=1$, i.e.\ $e_0,\dots ,e_{r+1}$ are coprime in $k[x_0,\dots ,x_n]$.}
\end{align}
Secondly, there is some $\mu\in K^*$ with
\begin{align} \label{con1:qsimei}
q \cong \langle\mu\rangle \otimes \left\langle \frac{e_0}{x_0^{d}},\frac{e_1}{x_0^{d-2}},\frac{e_2}{x_0^{d-2}},\dots ,\frac{e_{r+1}}{x_0^{d-2}}\right\rangle   .
\end{align}
Thirdly, there is some $\lambda\in K^\ast$ with
\begin{align} \label{con2:subpsi}
\langle\lambda\rangle \otimes \left \langle \frac{e_1}{x_0^{d-2}},\frac{e_2}{x_0^{d-2}},\dots ,\frac{e_{r+1}}{x_0^{d-2}} \right \rangle \subset \psi .
\end{align}

Let $N:=n+r$ and choose homogeneous coordinates $x_0,\dots ,x_n,y_1,\dots ,y_{r+1}$ on $\CP^{N+1}$.
We consider the hypersurface
\begin{align} \label{def:Z}
Z=\{F=0\}\subset \CP^{N+1}\ \ \text{with}\ \ F=e_0+\sum_{i=1}^{r+1} e_i \cdot y_i^2 
\end{align}
of degree $d$.
Condition (\ref{con0:coprime}) implies that $Z$ is integral.

Let $\widetilde \CP^{N+1}$ be the blow-up of $\CP^{N+1}$ along the $r$-plane $P:=\{x_0=\dots = x_n=0\}$.
Then, $\widetilde \CP^{N+1}\cong \CP(\mathcal E)$, where $\mathcal E:=\mathcal O_{\CP^n}(-1)\oplus \mathcal O_{\CP^n}^{\oplus (r+1)} $ and the natural morphism $\widetilde \CP^{N+1}\to \CP^n$, induced by projection to the $x$-coordinates,
 identifies to the projection $\CP(\mathcal E)\to \CP^n$.
The blow-up 
$$
Y:=Bl_PZ 
$$ 
of $Z$ along $P$ is the proper transform of $Z$ in $\widetilde \CP^{N+1}$ and so we get a morphism $f:Y\to \CP^n$.
Locally over $\CP^n$,  
$Y\subset \CP(\mathcal E)$ is given by the quadratic form 
\begin{align} \label{def:Y}
e_0 \cdot z_0^2+\sum_{i=1}^{r+1} e_i\cdot z_i^2 ,  
\end{align}
where $z_0$ is a local coordinate that trivializes $\mathcal O_{\CP^n}(-1)$ and  $z_1,\dots ,z_{r+1}$ trivialize $\mathcal O_{\CP^n}^{\oplus (r+1)}$, 
c.f.\ \cite[Section 3.5]{Sch1}.

By condition (\ref{con1:qsimei}), the generic fibre of $f: Y\to \CP^n$ is birational to the quadric over $K$ defined by $q$ in (\ref{def:q}).
Hence, $ f^\ast \alpha\in H^n_{nr}(k(Y)/k,\Z/2)$ is unramified by Proposition \ref{prop:unramified-coho}. 

\begin{proposition} \label{prop:hypersurface}
Let $k$ be an algebraically closed field of characteristic different from $2$.
Let $n\geq 1$ and $1\leq r\leq 2^n-2$.
Let $e_0,\dots,e_{r+1}\in k[x_0,\dots ,x_n]$ be homogeneous polynomials as above such that (\ref{con0:coprime}), (\ref{con1:qsimei}) and (\ref{con2:subpsi}) hold and consider the corresponding hypersurface $Z\subset \CP^{N+1}$ from (\ref{def:Z}).
Let $\tau: Y' \to Y=Bl_PZ$ be an alteration and let $\xi:Y'\to Z$ be the natural morphism.  

Then any subvariety $E\subset \xi^{-1}(Z^{\sing})$ satisfies $(\tau^\ast f^\ast \alpha)|_{E}=0\in H^n(k(E),\Z/2)$. 
\end{proposition}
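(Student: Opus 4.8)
The plan is to reduce the statement to the vanishing result already proved in Proposition \ref{prop:unramified-coho}(\ref{item:prop:unramified-coho:2}), by analyzing where the singular locus of $Z$ sits relative to the map $f\circ\tau:Y'\to\CP^n$. First I would describe $Z^{\sing}$ explicitly. Away from the $r$-plane $P=\{x_0=\dots=x_n=0\}$, the blow-up $Y=Bl_PZ\to Z$ is an isomorphism, and $Y$ is a (weak) quadric bundle over $\CP^n$ given fiberwise by \eqref{def:Y}. Points of $Z\setminus P$ that are singular on $Z$ correspond either to points where the quadric degenerates (i.e.\ lying over the locus in $\CP^n$ where the discriminant $e_0e_1\cdots e_{r+1}$ vanishes, together with the singular point of the fiber quadric there) or to points lying over the singular locus of the quadric bundle's base behavior; in any case such points do not dominate $\CP^n$ — their image in $\CP^n$ is contained in the proper closed subset $\{e_0e_1\cdots e_{r+1}=0\}$. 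Along $P$ itself, one uses that $Z$ has multiplicity $d-2$ along $P$: the exceptional divisor of $Y\to Z$ maps onto $P$, which is contracted to a point-set of dimension $r<N$ in $\CP^n$-terms, and again the image in $\CP^n$ of the exceptional locus is all of $\CP^n$ only via the $(r-1)$-fold quadric bundle $E\to\CP^n$ mentioned in the introduction — but the relevant subvarieties $E\subset Y'$ here are those mapping into $Z^{\sing}$, and $P\subset Z^{\sing}$ forces their image under $f\circ\tau$ to land in a proper closed subset.

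Concretely, the key claim to establish is: \emph{if $E\subset Y'$ maps into $Z^{\sing}$ via $\xi=\xi\circ\tau$, then $f\circ\tau$ does not map $E$ dominantly onto $\CP^n$.} Granting this, Proposition \ref{prop:unramified-coho}(\ref{item:prop:unramified-coho:2}) applies directly (after possibly discarding the case where $E$ fails to meet the smooth locus of $Y'$, which I would handle by a blow-up argument as in the proof of that proposition, or by noting $Y'$ is nonsingular since $\tau$ is an alteration), giving $(\tau^\ast f^\ast\alpha)|_E=0$. To prove the claim I would argue that $\xi^{-1}(Z^{\sing})$ does not dominate $\CP^n$: the singular locus of a hypersurface of the form \eqref{def:Z}, together with the exceptional divisor over $P$, maps under $f$ into the hypersurface $\{e_0\cdot e_1\cdots e_{r+1}=0\}\subsetneq\CP^n$ — for a point of $Y$ lying over a point $p\in\CP^n$ with all $e_i(p)\neq 0$, the fiber of $f$ is a smooth quadric of dimension $r\geq 1$ sitting inside a smooth ambient, and the corresponding point of $Z$ away from $P$ is a smooth point of $Z$, while the preimage in $Y$ of a point of $P$ corresponds (via the $z_0$-coordinate) again to the quadric \eqref{def:Y} which over such $p$ is smooth. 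Hence $\xi^{-1}(Z^{\sing})\subset f^{-1}(\{e_0\cdots e_{r+1}=0\})$, and composing with $\tau$ shows $f\circ\tau$ maps every $E\subset Y'$ with $\tau(E)\subset Z^{\sing}$ into a proper closed subset of $\CP^n$.

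The main obstacle I anticipate is the careful bookkeeping near the $r$-plane $P$: one must verify that the exceptional divisor $\mathrm{Exc}(Y\to Z)$, which \emph{does} dominate $\CP^n$ (it is the $(r-1)$-fold quadric bundle $E\to\CP^n$), is nonetheless \emph{not} contained in $\xi^{-1}(Z^{\sing})$ in general — that is, a general point of this exceptional divisor lies over a smooth point of $Z$ only if $\mathrm{mult}_P Z = d-2$ behaves as expected, so that the subvarieties forced into $Z^{\sing}$ really do drop dimension over $\CP^n$. This is exactly the subtle point flagged in the introduction ("$\alpha$ also has to restrict to zero on the $(r-1)$-fold quadric bundle $E$"), and condition \eqref{con2:subpsi} is presumably what rescues it: it guarantees, via Theorem \ref{thm:EL}, that $f^\ast\alpha$ restricts to zero on the generic point of that exceptional divisor $E\to\CP^n$ even though $E$ dominates $\CP^n$, because the relevant subform of $\psi$ governing $E$ makes $\alpha$ die there. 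So I would split the argument into (i) the subvarieties not dominating $\CP^n$, handled by Proposition \ref{prop:unramified-coho}(\ref{item:prop:unramified-coho:2}), and (ii) the finitely many components of $\xi^{-1}(Z^{\sing})$ that \emph{do} dominate $\CP^n$ — realized inside the exceptional locus of $Bl_PZ\to Z$ — handled by checking via \eqref{con2:subpsi} and Theorem \ref{thm:EL} that $f^\ast\alpha$ pulls back to zero on their function fields, then invoking the injectivity/specialization properties of unramified cohomology (\cite[Theorem 4.1.1]{CT}) to pass from the generic point to $E$ itself.
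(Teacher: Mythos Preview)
Your closing paragraph lands on the correct two-case split, and that is exactly the paper's argument: (i) subvarieties $E$ not dominating $\CP^n$ are handled by Proposition~\ref{prop:unramified-coho}(\ref{item:prop:unramified-coho:2}); (ii) subvarieties $E$ dominating $\CP^n$ have $\tau(E)$ contained in the unique component $E''$ of the exceptional divisor of $Y\to Z$ that dominates $\CP^n$ (because the generic fibre of $f$ is smooth, so away from $E'$ a point of $Y$ lying over the generic point of $\CP^n$ maps to a smooth point of $Z$), and there condition~\eqref{con2:subpsi} plus Theorem~\ref{thm:EL} give $(f^\ast\alpha)|_{E''}=0$, after which the injectivity theorem applied to the smooth generic fibre $E''_\eta$ propagates this vanishing to every subvariety of $E''$ dominating $\CP^n$.

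But the path you take to get there contains a genuine error. Your stated ``key claim'' --- that every $E\subset Y'$ with $\xi(E)\subset Z^{\sing}$ fails to dominate $\CP^n$ --- is false in the cases of interest. The exceptional divisor $E'=\{z_0=0\}$ of $Y\to Z$ maps onto the plane $P$; since $Z$ has multiplicity $d-2$ along $P$, we have $P\subset Z^{\sing}$ as soon as $d\geq 4$, and yet $E''\subset E'$ dominates $\CP^n$. Consequently your attempted justification that $\xi^{-1}(Z^{\sing})\subset f^{-1}(\{e_0\cdots e_{r+1}=0\})$, and your subsequent hope that ``a general point of this exceptional divisor lies over a smooth point of $Z$'', are both wrong. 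This is not a side technicality to be patched; it is the entire content of case~(ii), and it is the sole reason condition~\eqref{con2:subpsi} was imposed in the first place. The proof should therefore not attempt to establish the false ``key claim'' at all: start directly with the dichotomy and treat the dominating case head-on via~\eqref{con2:subpsi} and Theorem~\ref{thm:EL}, exactly as you yourself sketch in your last sentence.
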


\begin{proof} 
Consider $Y=Bl_PZ$. 
In the coordinates (\ref{def:Y}), the exceptional divisor $E'\subset Y$ of the blow-up $Y\to Z$ is given by $z_0=0$.
Note that $E'$ might be reducible (e.g.\ if $e_1,\dots ,e_{r+1}$ are not coprime).
However, the generic fibre of $f|_{E'}:E'\to \CP^n$ is a smooth quadric and so $E'$ has a unique component  $E''\subset E'$ that dominates $\CP^n$.
Moreover, condition (\ref{con2:subpsi}) implies that the generic fibre $E''_\eta$ of $E''\to \CP^n$ is defined by a quadratic form that is similar to a subform of $\psi$.
Hence, Theorem \ref{thm:EL} shows 
$$
(f^\ast \alpha)|_{E''}=0\in H^n(k(E''),\Z/2) .
$$

Since $\psi$ is anisotropic, the generic fibre  $E''_\eta$ is a smooth quadric over $K=k(\CP^n)$.
Let $E\subset E''$ be a subvariety which dominates $\CP^n$.
Since $(f^\ast \alpha)|_{E''}=0$, the injectivity theorem (see e.g.\ \cite[Theorem 3.8.1]{CT}), 
applied to the local ring of $E''_{\eta}$ at the generic point of $E$, then shows 
\begin{align} \label{eq:f*alpha=0}
(f^\ast \alpha)|_{E}=0\in H^n(k(E),\Z/2) .
\end{align} 

Let now $\tau:Y'\to Y$ be an alteration.  
The composition $\xi: Y'\to Z$ yields an alteration of $Z$.
Let $E\subset \xi^{-1}(Z^{\sing})$ be a subvariety. 
If $E$ does not dominate $\CP^n$ via $f\circ \tau$, then   $(\tau^\ast f^\ast \alpha)|_{E}=0$ follows from Proposition \ref{prop:unramified-coho}, because condition (\ref{con1:qsimei}) implies that the generic fibre of $f:Y\to \CP^n$ is isomorphic to the projective quadric over $K$, defined by $q$ from (\ref{def:q}).
On the other hand, if $E$ dominates $\CP^n$ via $f\circ \tau$, then $\tau(E)$ must be a subvariety of $E''$, because $\xi(E)\subset Z^{\sing}$.
Since the generic fibre of $Y\to \CP^n$ is smooth, $f^\ast \alpha$ can be restricted to the generic point of $\tau(E)$ (see \cite[Theorem 4.1.1]{CT}) and this restriction vanishes by (\ref{eq:f*alpha=0}) above.
Hence, $(\tau^\ast f^\ast \alpha)|_{E}=0$  by functoriality. 
This finishes the proof of Proposition \ref{prop:hypersurface}.
\end{proof} 

\begin{remark} \label{rem:CH0} 
The exceptional divisor $E'$ of $Y=Bl_PZ$ is a hypersurface of bidegree $(d-2,2)$ in $\CP^n\times \CP^r$, given by $\sum_{i=1}^{r+1}e_i\cdot y_i^2=0$. 
The generic fibre $E'_\eta$ of $E'\to \CP^r$ is thus a hypersurface of degree $\leq d-2$ (with equality if $E'$ is irreducible) in $\CP^{n}_{k(\CP^r)}$ and in general it seems very unlikely that such a hypersurface admits a zero-cycle of degree one.
In particular, it seems unlikely that $Z$ admits a universally $\CH_0$-trivial resolution, cf.\ \cite{CT-Pirutka}.
Hence, the degeneration method in \cite{voisin,CT-Pirutka} does not seem to apply to the singular hypersurface $Z$ considered above. 
We expect in particular that $E'_\eta$ is not stably rational and so also the methods from \cite{NS,KT} (which assume characteristic $0$) do not seem to apply. 
\end{remark}


\section{Proof of main results}

\subsection{Theorem \ref{thm:main:intro}}
Via Lemma \ref{lem:dec}, Theorem \ref{thm:main:intro} follows from the following more general result.

\begin{theorem}\label{thm:main}
Let $k$ be an uncountable field of characteristic different from two.
Let $N\geq 3$ be a positive integer and write $N=n+r$, where $n,r\geq 1$ are integers with $2^{n-1}-2\leq r\leq 2^n-2$. 
Let $L\subset \CP^{N+1}_k$ be either empty or a linear subspace with $\dim(L)\leq r$.
Fix integers $d\geq n+2$ and $d'\leq d-2$.

Then a very general hypersurface $X\subset \CP^{N+1}_k$ of degree $d$ and with multiplicity $d'$ along $L$ does not admit an integral decomposition of the diagonal over the algebraic closure $\overline k$.
\end{theorem}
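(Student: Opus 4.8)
The plan is to reduce Theorem \ref{thm:main} to the degeneration machinery of Proposition \ref{prop:degeneration} together with the vanishing and non-vanishing results of Sections \ref{sec:vanishing}--\ref{sec:hypersurface}. First I would set up the specialization: since $k$ is uncountable, the family of hypersurfaces of degree $d$ with multiplicity $d'$ along a fixed linear subspace $L\subset\CP^{N+1}$ is parametrized by a projective space (or an open subset thereof), and a very general member of this family specializes, in the sense of Section 2, to any fixed member of the family, in particular to a carefully chosen hypersurface $Z=\{F=0\}$ of the form \eqref{def:Z}. So it suffices to exhibit one such $Z$, over the algebraic closure $\bar k$, to which Proposition \ref{prop:degeneration} applies with $\ell=2$; then $X_{\bar k}$ admits no integral decomposition of the diagonal, hence neither does $X$ (base-changing a decomposition of the diagonal up to $\bar k$ is harmless).

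The heart of the argument is therefore the construction of $Z$. Here I would invoke the numerics: writing $N=n+r$ with $2^{n-1}-2\le r\le 2^n-2$ and $d\ge n+2$, I need homogeneous polynomials $e_0,\dots,e_{r+1}\in\bar k[x_0,\dots,x_n]$ with $\deg e_0=d$, $\deg e_i=d-2$, satisfying the coprimality condition \eqref{con0:coprime} and the two quadratic-form conditions \eqref{con1:qsimei} and \eqref{con2:subpsi} relative to the special form $q$ from \eqref{def:q} built out of the polynomial $g$ from \eqref{eq:g:t}. The point of the polynomial $g=t^2G^2-x_0^\epsilon\prod x_i$ with $G$ of degree $\lceil\frac{n+1}{2}\rceil$ is precisely that it has low degree — this is what forces the slope bound $d\ge n+2$ rather than something linear in $N$. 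I would choose, roughly, $e_1,\dots,e_{r+1}$ to be (powers of $x_0$ times) the monomials $c_1,\dots,c_{r+1}$ appearing in $q$, so that \eqref{con1:qsimei} holds with $e_0/x_0^d$ matching $b$ up to the scalar $\mu$, and so that \eqref{con2:subpsi} holds because $\langle c_1/x_0^{d_1},\dots,c_{r+1}/x_0^{d_{r+1}}\rangle$ is literally a subform of $\psi$ by construction (the $c_i$ are among the entries of the Pfister form $\psi$). The degree bookkeeping — multiplying each $c_i$ by an appropriate power of $x_0$ to reach degree $d-2$, and producing $e_0$ of degree $d$ with $e_0/x_0^d \equiv \mu^{-1} b$ up to squares — is where I would need $d\ge n+2$: the degree of $g$ is $2\lceil\frac{n+1}{2}\rceil\le n+2\le d$, leaving just enough room. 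The coprimality \eqref{con0:coprime} can be arranged by adding a generic perturbation or by using condition \eqref{eq:cond:codim}, which guarantees $g$ does not vanish on coordinate subspaces.

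With such a $Z$ in hand, I would form $Y=Bl_P Z\to\CP^n$ where $P=\{x_0=\dots=x_n=0\}$ is an $r$-plane containing $L$; by the discussion around \eqref{def:Y}, the generic fibre of $f:Y\to\CP^n$ is the quadric defined by $q$, so $f^*\alpha\in H^n_{nr}(\bar k(Y)/\bar k,\Z/2)$ is unramified by Proposition \ref{prop:unramified-coho}(1), it is nonzero by Proposition \ref{prop:cohononvanishing:Constr3} (here the positive-transcendence-degree hypothesis is free since we work over $\bar k$, or one uses Remark \ref{rem:Q} in characteristic zero and a characteristic-$p$ degeneration otherwise), and by Proposition \ref{prop:hypersurface} its pullback under any alteration $\tau:Y'\to Y$ restricts to zero on every subvariety mapping to $Z^{\sing}$. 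Choosing $\tau$ of odd degree via de Jong--Gabber, Proposition \ref{prop:degeneration} gives the conclusion.

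I expect the main obstacle to be the explicit construction of the $e_i$ satisfying all three conditions \eqref{con0:coprime}--\eqref{con2:subpsi} simultaneously while respecting the tight degree constraint $d\ge n+2$ — in particular, verifying that the similarity \eqref{con1:qsimei} can be realized with polynomial (not merely rational) $e_i$ of exactly the prescribed degrees, and that the resulting $Z$ is integral with the prescribed multiplicity $d'\le d-2$ along $L$. A secondary subtlety is handling the case $r=2^{n-1}-2$ at the low end of the allowed range (versus the $r\ge 2^{n-1}-1$ range treated in \cite{Sch1}), which is flagged in the introduction as one of the new difficulties; I would check that the construction above does not secretly require $r\ge 2^{n-1}-1$, and if the monomial count forces it, adjust the choice of which $c_i$ to use for the $e_i$ accordingly.
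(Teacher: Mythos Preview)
Your outline matches the paper's proof almost exactly: reduce to $\bar k$, choose coordinates with $L\subset P=\{x_0=\dots=x_n=0\}$, take $g$ as in \eqref{eq:g:t} with $c_1=x_1\cdots x_n$, build $Z$ from polynomials $e_i$ satisfying \eqref{con0:coprime}--\eqref{con2:subpsi}, and then run Propositions \ref{prop:unramified-coho}, \ref{prop:cohononvanishing:Constr3}, \ref{prop:hypersurface} and \ref{prop:degeneration} with an odd-degree alteration. Your instinct that the construction of the $e_i$ is the crux is exactly right; the one concrete wrinkle you did not anticipate is a case split on the parity of $d$. With the naive choice $e_i=x_0^{d-2-d_i}c_i$ for $i\geq 1$ and $e_0=h^{d-\deg g}g$ (where $h$ is a linear form such as $x_0+x_1$), condition \eqref{con1:qsimei} forces the \emph{same} $\mu$ to work for all entries, so $\mu$ must be a square and hence $h^{d-\deg g}$ must be a square in $K$; since $\deg g$ is even this requires $d$ even. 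For $d$ odd the paper twists by $x_1$: replace each $c_i$ by $x_1c_i$ with squares absorbed (call these $c_i'$), set $e_i=x_0^{d-2-d_i'}c_i'$ and $e_0=h^{d-\deg g-1}x_1 g$, and then \eqref{con1:qsimei} and \eqref{con2:subpsi} hold with $\mu=\lambda=x_1$. Coprimality in both cases comes not from a generic perturbation but from the specific fact (via \eqref{eq:cond:codim}) that $g$ is divisible by no $x_i$, together with $h\notin\{x_0,\dots,x_n\}$.

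Two minor points. Your worry about $r=2^{n-1}-2$ is unfounded: the construction only uses $1\leq r\leq 2^n-2$, and the lower bound $r\geq 2^{n-1}-2$ in the statement is just what makes the decomposition $N=n+r$ essentially unique. And the reduction to algebraically closed $k$ is not quite ``base-changing a decomposition is harmless'' (the conclusion is already about $X_{\bar k}$); rather, one argues on the parameter space that the countable union of bad loci over $\bar k$ is Galois-stable and hence descends to a countable union of proper subvarieties over $k$.
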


\begin{proof} 
We aim to reduce to the case where $k$ is algebraically closed.
To this end, consider the parameter space $\mathcal M$ that parametrizes all hypersurfaces in $\CP^{N+1}_k$ of degree $d$ and with multiplicity $d'$ along $L$. 
Suppose we know the theorem over the algebraic closure $\overline k$ of $k$.
Then there are countably many proper subvarieties $S_i\subsetneq \mathcal M_{\overline k}$, defined over $\overline k$, such that for any hypersurface $X$ in $\mathcal M_{\overline k}\setminus \bigcup_i S_i$, $X_{\overline k}$ does not admit an integral decomposition of the diagonal.  
Each $S_i$ is defined over some finite extension of $k$ and so it has finitely many orbits under the Galois group $\Gal(\overline k/k)$.
Hence, up to adding all the Galois conjugates, we may assume that $\bigcup_i S_i$ is closed under the Galois action.
This implies that $\bigcup_i S_i =\bigcup_j (T_j)_{\overline k}$ for a countable union of proper subvarieties $T_j\subset \mathcal M$, that are defined over $k$.
It then follows that any hypersurface parametrized by $\mathcal M \setminus \bigcup_j T_j$ does not admit an integral decomposition of the diagonal over $\overline k$, as we want.
We may thus from now on assume that $k$ is algebraically closed. 

We choose coordinates  $x_0,\dots ,x_n,y_1,\dots ,y_{r+1}$ on $\CP^{N+1}$, such that 
$$
L\subset P:= \{x_0=\dots =x_n=0\} .
$$
In order to prove the theorem, we then specify the hypersurface $Z$ from Section \ref{sec:hypersurface} as follows.
Let $g$ and $b$ be as in (\ref{eq:g:t}) in Section \ref{sec:non-vanishing} and  consider the homogeneous polynomials $c_i$ of degree $d_i$, defined in Section \ref{sec:construction}, where we assume that (\ref{eq:c1:Constr3}) holds, i.e.\ $c_1=x_1\cdots x_n$.

\textbf{Case 1.} $d\geq n+2$ with $d$ even.

Let $h=x_0+x_1$ and consider
\begin{align} \label{eq:ei:Case1}
e_0:=h^{d-\deg(g)} g \ \ \text{and}\ \ e_i:=x_0^{d-2-d_i}c_i 
\end{align}
for $i\geq 1$.
Since $\deg(g)=2\lceil\frac{ n+1}{2}\rceil$ and $d\geq n+2$, $d-\deg(g)$ is non-negative.
Similarly, $d-2-d_i\geq 0$ because $d_i\leq n$ for all $i$. 

Since $G$ contains the monomial $x_i^{\lceil\frac{ n+1}{2}\rceil}$ nontrivial for all $i$, $g=t^2G^2+x_0^\epsilon x_1\cdots x_n$ from (\ref{eq:g:t}) is not divisible by $x_i$ for any $i$.
Hence, $e_0,\dots ,e_{r+1}$ are coprime and so (\ref{con0:coprime}) holds.
Since $d-\deg(g)$ is even, because $d$ and $\deg(g)$ are even, the conditions (\ref{con1:qsimei}) and (\ref{con2:subpsi}) are also satisfied (with $\lambda=\mu=1$).
We may then consider the degree $d$ hypersurface $Z:=\{F=0\}\subset \CP^{N+1}$ from (\ref{def:Z}), where $F=e_0+\sum_{i=1}^{r+1} e_i\cdot y_i^2  $. 

Let now $X$ be a very general hypersurface of degree $d\geq n+2$  with $d$ even as in the theorem.
Then $X$ degenerates to $Z$, see e.g.\ \cite[\S 2.2]{Sch1}.
Let $Y:=Bl_PZ$ and let $f:Y\to \CP^n$ be the morphism induced by projection to the $x$-coordinates.
Since $g$ in (\ref{eq:g:t}) satisfies (\ref{eq:cond:codim}) and (\ref{eq:cond:square}), Proposition \ref{prop:unramified-coho} shows that $f^\ast \alpha \in H^n_{nr}(k(Y)/k,\Z/2)$, where $\alpha$ is the class from (\ref{eq:alpha}).
By Proposition \ref{prop:cohononvanishing:Constr3}, $f^\ast \alpha \neq 0$.
By de Jong and Gabber, there is an alteration $\tau: Y'\to Y$ of $Y=Bl_PZ$ of odd degree; the natural map $\xi: Y'\to Z$ yields an alteration of odd degree of $Z$.  
By Proposition \ref{prop:hypersurface}, the restriction $(\tau^\ast f^\ast \alpha)|_{E}=0$ vanishes for any subvariety $E\subset Y'$ which maps to the singular locus of $Z$. 
It thus follows from Proposition \ref{prop:degeneration} that $X$ does not admit an integral decomposition of the diagonal, as we want.

\textbf{Case 2.} $d\geq n+2$ with $d$ odd.

For $i>0$, we consider $x_1c_i$ and absorb squares; the formal definition is as follows: 
$$
 c_{\epsilon}':=x_1^{1-\epsilon_1}\cdot \prod_{i=2}^nx_i^{\epsilon_i}\ \ \text{and}\ \  c'_i:= c_{\rho(i)}' .
$$
Let $d_i':=\deg( c_i')$ and note that $d'_i\leq n$ for all $i\geq 1$. 
We then define
\begin{align} \label{eq:ei:Case2}
e_0:= h^{d-\deg(g)-1}x_1 g \ \  \text{and} \ \ e_i:=x_0^{d-2-d_i'} c'_i 
\end{align}
for $i\geq 1$, 
where $h=x_0+x_1$ as in case 1. 

Since $c_1'=x_2\cdots x_n$ is not divisible by $x_1$ and $h$, and $g$ is not divisible by $x_i$ for any $i$, $e_0,\dots ,e_{r+1}$ are coprime and so (\ref{con0:coprime}) holds. 
Moreover, since $d-\deg(g)-1$ is even, because $\deg(g)$ is even and $d$ is odd in case 2, $e_0,\dots ,e_{r+1}$ satisfy the conditions (\ref{con1:qsimei}) and (\ref{con2:subpsi}) with $\lambda=\mu=\frac{x_1}{x_0}$.
We may then consider the degree $d$ hypersurface $ Z:=\{F=0\}\subset \CP^{N+1}$ from (\ref{def:Z}), where $F=e_0+\sum_{i=1}^{r+1} e_i\cdot y_i^2  $. 

Let now $X$ be a very general hypersurface of degree $d\geq n+2$ with $d$ odd as in the theorem.
Then $X$ degenerates to $Z$ (see e.g.\ \cite[\S 2.2]{Sch1}) and we conclude as in case 1 that $X$ does not admit an integral decomposition of the diagonal.
This finishes the proof of the theorem.
\end{proof}

If we put $d'=1$ in Theorem \ref{thm:main}, we obtain the particularly interesting case of hypersurfaces that contain a linear subspace of dimension $r'\leq r$. 
If $r'> \frac{N}{2}$, then any such hypersurface is singular, but for $r'\leq \frac{N}{2}$, a general such hypersurface is smooth, as we will recall below. 
This yields the following. 

\begin{corollary} \label{cor:linearspace}
Let $k$ be an uncountable field of characteristic different from two.
Let $N\geq 3$ be a positive integer and write $N=n+r$, where $n,r\geq 1$ are integers with $2^{n-1}-2\leq r\leq 2^n-2$.
Fix integers $d\geq n+2$ and $l\leq \frac{N}{2}$.

Then a very general hypersurface $X\subset \CP^{N+1}_k$ of degree $d$ and containing a linear space of dimension $l$ is smooth and stably irrational over the algebraic closure $\overline k$.
\end{corollary}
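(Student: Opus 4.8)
The plan is to deduce the corollary directly from Theorem \ref{thm:main} with $d'=1$, supplemented by Lemma \ref{lem:dec} for the irrationality statement and by an elementary argument for smoothness. Setting $d'=1$ in Theorem \ref{thm:main} means allowing the hypersurface to have multiplicity one along a linear subspace $L$; for a form of degree $d\geq n+2\geq 3$ this is the same as requiring that it vanish on $L$, and the general form vanishing on $L$ has multiplicity exactly one along $L$ (the forms of degree $d$ in $I_L^2$ form a proper subspace of those in $I_L$ since $d\geq 3$). Hence Theorem \ref{thm:main} already gives: the base change to $\overline{k}$ of a very general degree-$d$ hypersurface containing a linear subspace $L$ with $\dim L\leq r$ admits no integral decomposition of the diagonal, so by Lemma \ref{lem:dec} it is not retract rational and in particular not stably rational.

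The first point to verify is that the hypothesis $\dim L\leq N/2$ of the corollary is contained in the hypothesis $\dim L\leq r$ of the theorem. As $\dim L$ is an integer this reduces to $\lfloor N/2\rfloor\leq r$, i.e.\ $n\leq r+1$; for $n\geq 3$ this follows from $r\geq 2^{n-1}-2\geq n-1$, and for $n\in\{1,2\}$ from $r=N-n\geq 3-n$ (using $N\geq 3$). With this in place, stable irrationality over $\overline{k}$ of the very general such hypersurface is immediate from the previous paragraph.

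It then remains to see that the very general hypersurface of degree $d$ containing a fixed linear subspace $L$ of dimension $r'\leq N/2$ is smooth. I would work in the linear system of degree-$d$ forms vanishing on $L$: this is a projective space, hence irreducible, so smoothness of its very general member — which is not affected by the countable union of bad loci produced by Theorem \ref{thm:main}, since the base field is uncountable — follows once a single smooth member is exhibited. By transitivity of $\PGL_{N+2}$ on linear subspaces of a given dimension, it suffices to produce one smooth degree-$d$ hypersurface of $\CP^{N+1}$ containing some linear subspace of dimension $r'$. For this I would take the Fermat hypersurface $\{\sum_{i=0}^{N+1}x_i^d=0\}$, which is smooth because $\operatorname{char}(k)\nmid d$ forces the partials $d\,x_i^{d-1}$ to have no common zero; fixing $\zeta\in\overline{k}$ with $\zeta^d=-1$, it contains the linear subspace obtained by identifying coordinates in pairs via $x_{2j+1}=\zeta x_{2j}$ (together with the single extra equation $x_{N+1}=0$ when $N$ is odd), which has dimension $\lfloor N/2\rfloor\geq r'$ and hence contains a linear subspace of dimension exactly $r'$.

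None of these steps is a serious obstacle: the corollary is essentially a repackaging of Theorem \ref{thm:main}. The only genuinely new input is the smoothness check of the last paragraph, and the only mild care needed there is the parity bookkeeping in the Fermat example (an extra coordinate hyperplane must be added when $N$ is odd) and the verification of $\lfloor N/2\rfloor\leq r$; the rest is a formal combination of Theorem \ref{thm:main}, Lemma \ref{lem:dec}, and openness of smoothness.
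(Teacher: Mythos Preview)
Your proof is correct and follows the same approach as the paper: the paper derives the corollary from Theorem \ref{thm:main} with $d'=1$ and notes that the Fermat hypersurface $\sum_i x_i^d=0$ furnishes a smooth example, exactly as you do. Your write-up is in fact more complete: you spell out explicitly the inequality $\lfloor N/2\rfloor\leq r$ needed to fit into the hypothesis $\dim L\leq r$ of Theorem \ref{thm:main} (which the paper leaves implicit), and you describe the linear subspace on the Fermat via the pairing $x_{2j+1}=\zeta x_{2j}$ with the parity correction when $N$ is odd.
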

\begin{proof}
Apart from the assertion that $X$ is smooth, the corollary is an immediate consequence of the case $d'=1$ in Theorem \ref{thm:main}.
To prove smoothness, it suffices to find a single example of a smooth hypersurface $X\subset \CP^{N+1}_k$ of degree $d\geq 3$ which contains a linear space of dimension $\lfloor\frac{N}{2}\rfloor$.
If $\operatorname{char}(k)\nmid d$, a smooth example is given by $\sum_{i=0}^{N+1} x_i^d=0$, which contains the linear space
$
\{x_0=\zeta x_1,x_2=\zeta x_3,\dots , x_{N}=\zeta x_{N+1} \}
$ 
if $N$ is even 
and
$
\{x_0=\zeta x_1,x_2=\zeta x_3,\dots , x_{N-1}=\zeta x_{N},x_{N+1}=0 \}
$
if $N$ is odd, where $\zeta\in k$ satisfies $\zeta^{d}=-1$.
If  $\operatorname{char}(k)\mid d$, a smooth example is given by $x_0^d+\sum_{i=0}^{N}x_ix_{i+1}^{d-1}=0$, which contains the linear space
$
\{x_0=x_2=\dots =x_{2\lfloor\frac{N+1}{2}\rfloor}=0\}.
$
This concludes the corollary.
\end{proof}

\begin{proof}[Proof of Corollary \ref{cor:unirational}]
Let $N\in \{6,7,8,9\}$.
By Corollary \ref{cor:linearspace}, a very general quintic $X\subset \CP_\C^{N+1}$ containing a $3$-plane is smooth and stably irrational.
On the other hand, at least in characteristic zero, these examples are unirational by \cite{CMM}.
This proves Corollary \ref{cor:unirational}.
\end{proof}

\subsection{Theorem \ref{thm:slope} and examples over $\mathbb F_p(t)$ and $\Q$} 
Examples of stably irrational smooth quartic threefolds over $\overline \Q$ and of some higher-dimensional hypersurfaces over $\Q$ were previously given in \cite{CT-Pirutka} and \cite{totaro}, respectively.
If $N=2^n+n-2$, then we can also obtain examples defined over small fields like $\Q$ or $\F_p(t)$, as follows.
By Lemma \ref{lem:dec}, our result implies Theorem \ref{thm:slope} stated in the introduction.

\begin{theorem} \label{thm:Q}
Let $n\geq 2$ be an integer and put $N:=2^n+n-2$.
Let $k$ be a field of characteristic different from two.
If $k$ has positive characteristic, assume that it has positive transcendence degree over its prime field.
Then for any degree $d\geq n+2$ there is a smooth hypersurface $X\subset \CP^{N+1}_{k}$ of degree $d$ whose base change $X_{\overline k}$ to the algebraic closure of $k$ does not admit an integral decomposition of the diagonal.
\end{theorem}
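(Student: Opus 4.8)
The plan is to reduce Theorem \ref{thm:Q} to the machinery already developed, specifically to the proof of Theorem \ref{thm:main}, but with attention to the field of definition. Since $N = 2^n + n - 2$, we have $r := N - n = 2^n - 2$, which is exactly the top of the allowed range $2^{n-1} - 2 \leq r \leq 2^n - 2$, and crucially it is the case $r = 2^n - 2$ in which the non-vanishing of $f^\ast\alpha$ is controlled by the simplest criterion: by Lemma \ref{lem:bnotsquare}, $f^\ast\alpha \neq 0$ if and only if $b$ is not a square in $K = k(\CP^n)$. So first I would set up the hypersurface $Z \subset \CP^{N+1}_k$ exactly as in Section \ref{sec:hypersurface}, taking $g$ as in \eqref{eq:g:t} (using the hypothesis on the transcendence degree of $k$ to supply the transcendental element $t$, or, in characteristic zero, the integral polynomial $g = p^2(\sum x_i^{\lceil (n+1)/2\rceil})^2 + x_0^\epsilon x_0\cdots x_n$ from Remark \ref{rem:Q}), and defining $e_0, \dots, e_{r+1}$ by \eqref{eq:ei:Case1} if $d$ is even and by \eqref{eq:ei:Case2} if $d$ is odd, so that conditions \eqref{con0:coprime}, \eqref{con1:qsimei}, \eqref{con2:subpsi} hold. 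With these choices everything is defined over $k$ itself (not merely over an uncountable extension), since $Z$ is an explicit hypersurface.

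Next I would verify the two cohomological inputs. First, $b$ is not a square in $K$: this is immediate from the shape of $g$ in \eqref{eq:g:t}, because $g = t^2 G^2 - x_0^\epsilon \prod x_i$ is not a square in $k[x_0,\dots,x_n]$ (it has a simple zero along, say, $\{x_1 = 0\}$ after reducing modulo a suitable specialization, or one argues directly via the $t$-adic valuation as in Proposition \ref{prop:cohononvanishing:Constr3}), and $k[\CP^n]$ is a UFD in the affine chart $x_0 \neq 0$. Hence $f^\ast\alpha \neq 0$ by Lemma \ref{lem:bnotsquare}. Second, $f^\ast\alpha \in H^n_{nr}(k(Y)/k,\Z/2)$ is unramified by Proposition \ref{prop:unramified-coho}\eqref{item:prop:unramified-coho:1}, and its pullback under an alteration restricts to zero on every subvariety lying over $Z^{\sing}$ by Proposition \ref{prop:hypersurface}. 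Note that here $k$ need not be algebraically closed; but Proposition \ref{prop:degeneration} and the vanishing propositions are stated over algebraically closed fields, so the actual obstruction-theoretic argument will be run over $\overline k$ after base change.

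Then I would take a general smooth hypersurface $X \subset \CP^{N+1}_k$ of degree $d$ (e.g.\ one with a point where the relevant versal deformation argument applies; a Fermat-type perturbation $\sum x_i^d + \sum y_j^d$ works and is defined over the prime field, hence over $k$). The key point distinguishing this from Theorem \ref{thm:main} is that we do not want a \emph{very general} $X$ over an uncountable field — we want a \emph{specific} smooth $X$ over the given (possibly countable) field $k$. For this I would argue that $X$ degenerates to $Z$: choose a pencil (or a one-parameter family over a DVR $R$ with residue field $k$) whose special fibre is $Z$ and whose generic fibre is $X$ base-changed to $\Frac(R)$; concretely, take $\mathcal{X} = \{ s F_X + F_Z = 0 \} \subset \CP^{N+1}_{R}$ where $R = k[s]_{(s)}$ or its completion, $F_Z$ defines $Z$, and $F_X$ defines a smooth hypersurface. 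Base-changing to $\overline k$, Proposition \ref{prop:degeneration} (applied with $\ell = 2$, with the odd-degree alteration $\tau: Y' \to Y = Bl_P Z$ supplied by de Jong--Gabber, and with the vanishing guaranteed by Proposition \ref{prop:hypersurface}) then shows that $X_{\overline k}$ does not admit an integral decomposition of the diagonal, hence is not stably rational over $\overline k$. Finally, smoothness of $X$ holds for a general choice as long as $\operatorname{char}(k) \nmid d$; when $\operatorname{char}(k) \mid d$ one perturbs by an extra term (or uses a different smooth model of the same degree) — in any case smooth degree-$d$ hypersurfaces exist over $k$, and the degeneration $X \rightsquigarrow Z$ only requires that such an $X$ exists, which it does.

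The main obstacle I anticipate is not any single deep step — all the heavy lifting is in Propositions \ref{prop:unramified-coho}, \ref{prop:hypersurface}, \ref{prop:degeneration} and Lemma \ref{lem:bnotsquare} — but rather the bookkeeping needed to guarantee that \emph{a single smooth hypersurface over the small field $k$ degenerates to $Z$}. Over an uncountable field one invokes ``very general in a family'' and is done, but over $\Q$ or $\F_p(t)$ one must instead exhibit a concrete flat proper family over a DVR with residue field $k$, special fibre $Z$, and smooth generic fibre, and then note that being defined over $\Frac(R) \hookrightarrow$ some larger field is harmless because the conclusion (no integral decomposition of the diagonal) only weakens under field extension of the base, so it suffices to prove it for the geometric generic fibre, i.e.\ over $\overline{k(\mathcal{M})}$ or just over $\overline k$ after specializing the moduli parameter to a $k$-point giving a smooth fibre. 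One should double-check that a $k$-point of the parameter space $\mathcal{M}$ with smooth fibre exists (clear, by Fermat) and that the pencil through it and $Z$ is flat (clear, both fibres being hypersurfaces of the same degree in $\CP^{N+1}$). With that in hand the theorem follows exactly as in the even/odd case analysis of the proof of Theorem \ref{thm:main}.
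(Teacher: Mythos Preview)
Your identification of the key simplification at $r = 2^n - 2$ (so that Lemma~\ref{lem:bnotsquare} replaces Proposition~\ref{prop:cohononvanishing:Constr3}) is exactly right, and you correctly recognise that the only real issue is producing a smooth $X$ over the small field $k$ that degenerates to $Z$. However, your proposed resolution of that issue does not work.

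The problem is that you have placed $Z$ over $k$ itself --- in positive characteristic by using the transcendental $t\in k$ inside $g$, in characteristic zero by using the integral $g$ of Remark~\ref{rem:Q}. Once $Z$ lives over $k$, the generic fibre of your pencil $\{sF_X + F_Z = 0\}$ is a hypersurface over $k(s)$, and it is \emph{not} $X_{k(s)}$; it is a genuinely different variety $X'$. Proposition~\ref{prop:degeneration} then shows only that $X'_{\overline{k(s)}}$ has no decomposition of the diagonal. To descend to a smooth hypersurface over $k$ you propose specialising $s$ to a $k$-point, but this goes the wrong way: the specialisation map on Chow groups shows that a decomposition of the diagonal \emph{specialises}, so knowing that the generic fibre $X'$ has no decomposition tells you nothing about any closed fibre. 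Concretely, for $k=\F_p(t)$ your $Z$ already lives over $\F_p(t)$, so the pencil's generic fibre lives over $\F_p(t,s)$, which does not embed in $k$; for $k=\Q$ the same obstruction occurs with $\Q(s)$. (Your sentence ``whose generic fibre is $X$ base-changed to $\Frac(R)$'' is simply false for the pencil you write down: if the generic fibre were a base change of a $k$-variety, the family would be constant and the special fibre would be $X$, not $Z$.)

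The paper's fix is to define $Z$ over the algebraic closure of a \emph{finite} field, using $g = G^2 + x_0^\epsilon x_0\cdots x_n$ with no transcendental parameter at all; non-vanishing of $f^\ast\alpha$ is still checked via Lemma~\ref{lem:bnotsquare} since $b$ is visibly not a square. The single transcendental available in $k$ (in characteristic $p$) is then spent on the pencil parameter: one takes $X$ to be the generic fibre of a general pencil through $Z_0$ over $\F_p$, which is smooth over $\F_p(s)\hookrightarrow k$ via $s\mapsto t$, and $X_{\overline k}$ degenerates to $Z$ over $\overline{\F_p}$. In characteristic zero one instead uses a mixed-characteristic degeneration to the same $Z$ over $\overline{\F_p}$.
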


\begin{proof}
The proof of the theorem follows the same line of argument as the proof of Theorem \ref{thm:main}.
The main difference being that we will degenerate to a hypersurface which is defined over the algebraic closure of a finite field and so the non-vanishing result from Proposition \ref{prop:cohononvanishing:Constr3} does not apply.
We will replace that non-vanishing result by Lemma \ref{lem:bnotsquare}, which requires the assumption $r=2^n-2$ (and so $N=2^n+n-2$).
We explain the details in what follows. 

Let $n\geq 2$, $r=2^n-2$ and $N=n+r$.
Let further $p$ be an odd prime.
We will now work over the algebraic closure $\overline \F_p$ of $\F_p$.
As in the proof of Theorem \ref{thm:main}, we choose coordinates  $x_0,\dots ,x_n,y_1,\dots ,y_{r+1}$ on $\CP^{N+1}$, such that 
$$
L\subset P:= \{x_0=\dots =x_n=0\} .
$$
We consider $G=\sum_ix_i^{\lceil \frac{n+1}{2}\rceil}$, $g=G^2+x_0^\epsilon x_0\cdots x_n$ and $b=\frac{g}{x_0^{\deg(g)}}$.
With these choices, and for any $d\geq n+2$, we then consider the polynomials $e_i$ used in the proof of Theorem \ref{thm:main} (and which depend on the parity of $d$).
These choices determine the hypersurface $Z\subset \CP^{N+1}_{\overline \F_p}$ from Section \ref{sec:hypersurface}.
By definition, $Z=Z_0\times \overline \F_p$ is the base change of a hypersurface $Z_0$ which is defined over the prime field $\F_p$.
Let $Y=Bl_LZ$ with natural map $f:Y\to \CP^n$.
The class $f^\ast \alpha \in H^n(\overline \F_p(Y),\Z/2)$ is unramified over $\overline \F_p$ by Proposition \ref{prop:unramified-coho}.
Moreover, since $b$ is not a square and since $r=2^n-2$, Lemma \ref{lem:bnotsquare} implies that $f^\ast \alpha$ is nontrivial.
By Proposition \ref{prop:hypersurface}, the assumptions of the degeneration method (Proposition \ref{prop:degeneration}) are satisfied by $Z$ and so any proper variety which specializes to $Z$ is not stably rational.
This implies the theorem as follows.

If $k$ has positive characteristic $p$, then by the assumptions in the theorem, it has positive transcendence degree over its prime field.
The generic fibre of a sufficiently general pencil of degree $d$ hypersurfaces over $\F_p$ which contains $Z_0$ gives an example of a smooth hypersurface $X$ of degree $d$ which is defined over $k$ and such that $X_{\overline k}$ degenerates to $Z$.
Hence, $X_{\overline k}$ does not admit a decomposition of the diagonal, as we want.

If $k$ has characteristic zero, then we may choose any prime $p>2$ and consider the hypersurface $Z$  over $\overline \F_p$ from above.
There is a smooth hypersurface $X$ over $k$ (in fact over $\Q$) and of degree $d$ such that $X_{\overline k}$ degenerates to $Z$.
This shows that $X_{\overline k}$ does not admit a decomposition of the diagonal, as we want. 
This concludes the proof.  
\end{proof}

\subsection{Theorem \ref{thm:unramified:coho} and the integral Hodge conjecture for unirational varieties} \label{subsec:IHC:proof}

\begin{proof}[Proof of Theorem \ref{thm:unramified:coho}]
Let $N\geq 3$ be an integer and put $n:=N-1$ and $r=1$.
Consider the polynomial $g\in \C[x_0,\dots ,x_n]$ and the corresponding rational function $b$ from (\ref{eq:g:t}).
We then consider the quadratic form $q=\left\langle b,\frac{c_1}{x_0^{d_1}},\frac{c_2}{x_0^{d_2}}\right\rangle $ from (\ref{def:q}), where we assume that the bijection $\rho$ is chosen in such a way that $c_1=x_1x_2\cdots x_n$ and $c_2=x_2x_3\cdots x_n$. 

Let $Q$ be the projective conic over $K=\C(\CP^n)$ that is defined by $q$.
By Hironaka's theorem, we can choose some smooth complex projective variety $X$ of dimension $N=n+1$ together with a morphism $X\to \CP^n$ whose generic fibre is isomorphic to $Q$.
Our choice of $c_1$ and $c_2$ implies that $q$ is similar to
$
\left\langle b\cdot \frac{x_2x_3\cdots x_n}{x_0^{n-1}}, \frac{x_1}{x_0},1 \right\rangle 
$
and so $X$ is unirational, see e.g.\ \cite[Lemma 14]{Sch1}.
On the other hand, $H^n_{nr}(\C(X)\slash \C,\Z/2)\neq 0$ by Propositions \ref{prop:unramified-coho} and \ref{prop:cohononvanishing:Constr3}.
This proves the theorem in the case where $i=N-1$.
The general case follows by taking products with projective spaces, because unramified cohomology is a stable birational invariant, see \cite{CTO}.
This concludes the proof of Theorem \ref{thm:unramified:coho}.
\end{proof}

\begin{proof}[Proof of Corollary \ref{cor:IHC}]
Corollary \ref{cor:IHC} is a direct consequence of \cite[Th\'eor\`eme 1.1]{CTV} and Theorem \ref{thm:unramified:coho}, which produces unirational smooth complex projective varieties in any dimension at least four with nontrivial third unramified $\Z/2$-cohomology. 
\end{proof}

\section{Supplements}

\subsection{Double covers}
 
It is possible to adapt the arguments of this paper to the case of double covers of projective spaces.
The result is as follows; for earlier results on the rationality problem for double covers, see e.g.\ \cite{kollar2,voisin,bea,CTP2,oka,HPT2}.

\begin{theorem}
Let $N\geq 3$ be a positive integer and write $N=n+r$ with $2^{n-1}-2\leq r\leq 2^n-2$.
Let $k$ be an uncountable field of characteristic different from two.  
Then a double cover of $\CP^N_k$, branched along a very general hypersurface of even degree $d\geq 2\lceil\frac{n+1}{2}\rceil+2$ is not stably rational over the algebraic closure of $k$. 
\end{theorem}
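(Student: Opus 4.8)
The plan is to mimic the proof of Theorem \ref{thm:main}, replacing the ambient projective space $\CP^{N+1}$ (which carries a hypersurface with a quadric-bundle structure after blowing up a linear subspace) by $\CP^N$ together with the double-cover construction. Concretely, a double cover $\pi:X\to \CP^N_k$ branched along a very general hypersurface $B$ of even degree $d$ degenerates to the double cover $W\to \CP^N$ branched along a special (highly singular) hypersurface $B_0$ of degree $d$; the point is to choose $B_0$ so that $W$ carries, after a suitable blow-up, a weak $r$-fold quadric bundle $f:Y\to \CP^n$ whose generic fibre is birational to the quadric defined by the form $q$ from (\ref{def:q}). First I would write $W$ in coordinates: if $w$ is the degree-$d/2$ section cutting out the double cover and $B_0=\{G_0=0\}$, then $W=\{w^2=G_0\}$, and I would take $G_0$ of the form $e_0+\sum_{i=1}^{r+1} e_i y_i^2$ on the locus where the $y_i$ arise from projecting away from a linear subspace $P=\{x_0=\dots=x_n=0\}\subset\CP^N$ of dimension $r-1$, with the $e_i$ homogeneous polynomials in $x_0,\dots,x_n$ of the degrees dictated by Section \ref{sec:hypersurface} (so $\deg e_0=d$, $\deg e_i=d-2$). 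Here the condition $d\geq 2\lceil\frac{n+1}{2}\rceil+2$ plays exactly the role that $d\geq n+2$ plays in Theorem \ref{thm:main}: it guarantees $d-\deg(g)\geq 0$ (and $d-\deg(g)$ even, since both are even) so that conditions (\ref{con0:coprime}), (\ref{con1:qsimei}), (\ref{con2:subpsi}) can be arranged with $g$ and the $c_i$ chosen as in (\ref{eq:g:t}) and (\ref{eq:c1:Constr3}), using $e_0=h^{d-\deg(g)}g$ and $e_i=x_0^{d-2-d_i}c_i$ as in (\ref{eq:ei:Case1}).

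Granting such a presentation, the rest runs on autopilot along the lines of the final argument in Section \ref{sec:hypersurface} and the proof of Theorem \ref{thm:main}. I would blow up $P$ inside the double cover to get $Y$ with $f:Y\to\CP^n$ a weak quadric bundle with generic fibre the quadric defined by $q$; by Proposition \ref{prop:unramified-coho} the class $f^\ast\alpha$ with $\alpha=(a_1,\dots,a_n)$ lies in $H^n_{nr}(k(Y)/k,\Z/2)$, and by Proposition \ref{prop:cohononvanishing:Constr3} (using positive transcendence degree over the prime field in positive characteristic, via the $t\to 0$ specialization, or Remark \ref{rem:Q} in characteristic zero) it is nonzero. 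Then, taking an alteration $\tau:Y'\to Y$ of odd degree (de Jong--Gabber), Proposition \ref{prop:hypersurface}'s argument—$E''$ the unique component of the exceptional divisor dominating $\CP^n$, on which $f^\ast\alpha$ vanishes by Theorem \ref{thm:EL} because of (\ref{con2:subpsi}), combined with the vanishing on non-dominating subvarieties from Proposition \ref{prop:unramified-coho}(2)—shows $(\tau^\ast f^\ast\alpha)|_E=0$ on every subvariety $E\subset Y'$ mapping to the singular locus of $W$. Proposition \ref{prop:degeneration} then gives that $X$ (base-changed to $\bar k$) admits no integral decomposition of the diagonal, hence is not stably rational, and the reduction to $k$ algebraically closed is the same Galois-orbit argument as in the proof of Theorem \ref{thm:main}.

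The one genuinely new bookkeeping step—and the step I expect to be the main obstacle—is verifying that the double cover $W\to\CP^N$ branched along the chosen degree-$d$ hypersurface $B_0$ really is birational, after blowing up $P$, to the weak quadric bundle $Y\to\CP^n$ associated to $q$, i.e.\ that the double-cover equation $w^2=e_0+\sum e_i y_i^2$ can be massaged into the quadratic form (\ref{def:Y}) over $\CP^n$. Morally one absorbs $w$ as one more coordinate of the quadric: over the chart where (say) $x_0\neq 0$, setting $z_0=w/x_0^{d/2}$ and rescaling, the equation becomes $\langle -1\rangle z_0^2 + \langle e_0/x_0^d\rangle\cdot(\text{const}) + \sum (e_i/x_0^{d-2}) z_i^2$ type expression—so one should think of the relevant form as $\langle -1, e_0/x_0^d, e_1/x_0^{d-2},\dots\rangle$, which differs from $q$ by the extra hyperbolic-ish summand $\langle -1\rangle$ and by the scalar $\mu$ in (\ref{con1:qsimei}); since $-1$ is a square in $K$ (as $k$ is algebraically closed), $\langle -1\rangle\cong\langle 1\rangle$, and the analysis is governed by $\psi=\langle\langle a_1,\dots,a_n\rangle\rangle$ in exactly the same way. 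Sorting out the precise dimension count (so that $\dim X = N$ with $N=n+r$, which forces $\dim P = r-1$ rather than $r$, shifting the role of $r$ relative to the hypersurface case) and checking that condition (\ref{eq:cond:codim}) still holds after all the blow-ups and coordinate changes is where the care is needed; everything else is a transcription of the arguments already in the paper.
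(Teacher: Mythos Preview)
There is a genuine gap in your handling of the extra quadratic term coming from the double cover variable. After fixing the index bookkeeping (on $\CP^N$ with $N=n+r$ there are only $y_1,\dots,y_r$, so $F=e_0+\sum_{i=1}^r e_iy_i^2$), the double cover is $y_{r+1}^2+F=0$ and the blow-up along $P$ gives a weak quadric bundle with generic fibre defined by
\[
\left\langle 1,\ \tfrac{e_0}{x_0^{d}},\ \tfrac{e_1}{x_0^{d-2}},\dots,\ \tfrac{e_r}{x_0^{d-2}}\right\rangle .
\]
With your choice $e_0=h^{d-\deg(g)}g$ and $e_i=x_0^{d-2-d_i}c_i$ this form is $\langle 1,\,b,\,c_1/x_0^{d_1},\dots,c_r/x_0^{d_r}\rangle$, which is \emph{not} similar to $q=\langle b,\,c_1/x_0^{d_1},\dots,c_{r+1}/x_0^{d_{r+1}}\rangle$ (the entry $1$ replaces $c_{r+1}/x_0^{d_{r+1}}$, and $c_{r+1}$ is not a square). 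So Propositions \ref{prop:unramified-coho} and \ref{prop:cohononvanishing:Constr3} do not apply. Worse, the unramifiedness argument actually fails for your form: in Case~1 of the proof of Proposition \ref{prop:unramified-coho}, $b$ becomes a square in $\hat K$, so your form becomes $\langle 1,1,c_1,\dots,c_r\rangle$, which is isotropic (as $-1$ is a square); then $\hat K(q')/\hat K$ is purely transcendental and $\alpha$ does \emph{not} die there, so you cannot conclude $\varphi_2\varphi_1(\alpha)=0$.

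The paper supplies the missing idea: impose additionally $c_{r+1}=x_1x_2$, and twist by $x_1x_2$. Writing $c_i''$ for $x_1x_2c_i$ with squares absorbed, one gets $c_{r+1}''=1$, and $q$ is similar (with $\mu=x_1x_2$) to $\langle bx_1x_2,\,c_1'',\dots,c_r'',\,1\rangle$. Now the slot ``$1$'' from the double cover variable is exactly the slot for $c_{r+1}$, so one takes
\[
e_0=h^{d-\deg(g)-2}\,x_1x_2\,g,\qquad e_i=x_0^{d-\deg(c_i'')-2}c_i''\ \ (1\le i\le r),
\]
and both the analogue of (\ref{con1:qsimei}) and of (\ref{con2:subpsi}) (now with the extra ``$1$'' entry) hold with $\lambda=\mu=x_1x_2$. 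This is also why the degree bound is $d\ge \deg(g)+2=2\lceil\frac{n+1}{2}\rceil+2$ rather than merely $d\ge\deg(g)$: the exponent $d-\deg(g)-2$ on $h$ must be nonnegative to accommodate the extra $x_1x_2$ factor in $e_0$. Once this twist is in place, the rest of your outline (Propositions \ref{prop:unramified-coho}, \ref{prop:cohononvanishing:Constr3}, the exceptional-divisor vanishing, de Jong--Gabber, Proposition \ref{prop:degeneration}) goes through exactly as you say.
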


\begin{proof}
As in Theorem \ref{thm:main}, it suffices to treat the case where $k$ is algebraically closed.
Let $x_0,\dots ,x_n,y_1,\dots ,y_r$ be coordinates on $\CP^N$ and consider the $(r-1)$-plane $P=\{x_0=\dots =x_n=0\}$.
Let $e_0,\dots ,e_r\in k[x_0,\dots ,x_n]$ be homogeneous polynomials of degrees $\deg(e_0)=d$ and $\deg(e_i)=d-2$ for all $i\geq 1$.
We then consider the hypersurface of degree $d$ in $\CP^N$, given by
$$
F=e_0+\sum_{i=1}^r e_iy_i^2 .
$$
From now on we assume that $d$ is even and we consider the double covering $Z\to \CP^N$, branched along $\{F=0\}$.
Introducing an additional variable $y_{r+1}$, $Z$ is given by the equation $y_{r+1}^2+F=0$.
Since $F$ vanishes on the plane $P$, $Z$ contains a copy of $P$ and we consider the blow-up $Y:=Bl_PZ$.
It is well-known (see e.g.\ \cite[Section 3.5]{Sch1}) that $Y$ carries the structure of a weak $r$-fold quadric bundle $f:Y\to \CP^n$, which locally over $\CP^n$ is given by the equation
\begin{align} \label{eq:doublecover}
z_{r+1}^2+e_0z_0^2+\sum_{i=1}^r e_iz_i^2=0 .
\end{align}
The exceptional divisor of the blow-up $Y\to Z$ is given by $z_0=0$ and so it is the weak $(r-1)$-fold quadric bundle given by $z_{r+1}^2+\sum_{i=1}^r e_iz_i^2=0$.

In order to adapt the arguments used for hypersurfaces, we need to ensure that the following two conditions hold.
Firstly, there is some $\mu\in K^*$ with
\begin{align} \label{con1:qsimei:double}
q \cong \langle\mu\rangle \otimes \left\langle \frac{e_0}{x_0^{d}},\frac{e_1}{x_0^{d-2}},\frac{e_2}{x_0^{d-2}},\dots ,\frac{e_{r}}{x_0^{d-2}},1\right\rangle   .
\end{align}
Secondly, there is some $\lambda\in K^\ast$ with
\begin{align} \label{con2:subpsi:double}
\langle\lambda\rangle \otimes \left \langle \frac{e_1}{x_0^{d-2}},\frac{e_2}{x_0^{d-2}},\dots ,\frac{e_{r}}{x_0^{d-2}},1 \right \rangle \subset \psi .
\end{align}
The first condition ensures by Propositions \ref{prop:unramified-coho} and \ref{prop:cohononvanishing:Constr3} that $f^\ast \alpha\in H^n_{nr}(k(Y)\slash k,\Z/2)$ is unramified and nontrivial.
Moreover, by the argument in Section \ref{sec:hypersurface}, condition (\ref{con2:subpsi:double}) ensures together with Proposition \ref{prop:unramified-coho} that for any alteration $\tau:Y'\to Y$ of $Y=Bl_PZ$, the class $\tau^\ast f^\ast \alpha$ restricts to zero on the generic point of any subvariety $E\subset Y'$ that maps to the singular locus of $Z$. 

Let $c_i$ be as in Section \ref{sec:construction} and assume that $\rho$ in (\ref{eq:rho}) is chosen such that $c_1=x_1\cdots x_n$ and $c_{r+1}=x_1x_2$.
We consider $x_1x_2c_i$ and absorb squares to obtain polynomials $c_i''$ with $c_{r+1}''=1$.
For an even integer $d\geq \deg(g)+2=2\lceil \frac{n+1}{2}\rceil +2$, we then put
$$
e_0=h^{d-\deg(g)-2}x_1x_2 g\ \ \text{and}\ \ e_i=x_0^{d-\deg(c_i'')-2}c_i'' 
$$
for $i\geq 1$, where $g$ is as in (\ref{eq:g:t}) and $h$ is a linear homogeneous polynomial which is not a multiple of $x_i$ for $i=0,\dots ,n$.
Then conditions (\ref{con1:qsimei:double}) and (\ref{con2:subpsi:double}) are both satisfied (with $\lambda=\mu=\frac{x_1x_2}{x_0^2}$). 
Applying the same argument as in the proof of Theorem \ref{thm:main} shows then that a double cover of $\CP^N$, branched along a very general  hypersurface of degree $d$ does not admit a decomposition of the diagonal and so it is not stably rational. 
This concludes the theorem.
\end{proof}

\subsection{A general vanishing result}
Starting with the work of Artin--Mumford and Colliot-Th\'el\`ene--Ojanguren, many important examples of rationally connected varieties with unramified cohomology are constructed as follows.
One starts with a proper morphism $f:Y\to \CP^n$ whose generic fibre is a smooth quadric $Q$ over $k(\CP^n)$ and chooses $Q$ in such a special way that there is a class $\alpha\in H^n(k(\CP^n),\Z/2)$ whose pullback $f^\ast \alpha\in H^n(k(Y),\Z/2)$ is nontrivial and unramified over $k$, see e.g.\ \cite{artin-mumford,CTO,HPT,Sch1}.

We prove the following vanishing theorem, which shows that in the above situation, the vanishing condition that is needed in the degeneration method in Proposition \ref{prop:degeneration} for varieties which specialize to $Y$ is automatically satisfied.
Our result generalizes  \cite[Proposition 7]{Sch2} and shows that in fact item (\ref{item:prop:unramified-coho:1}) implies (\ref{item:prop:unramified-coho:2}) in Proposition \ref{prop:unramified-coho}.

\begin{theorem} \label{thm:vanishing}
Let $f:Y\to S$ be a surjective morphism of proper varieties over an algebraically closed field $k$ with $\operatorname{char}(k)\neq 2$ whose generic fibre is birational to a smooth quadric over $k(S)$.
Let $n=\dim(S)$ and assume that there is a class $\alpha\in H^n(k(S),\Z/2)$ with $f^\ast \alpha\in H^n_{nr}(k(Y)/k,\Z/2)$. 

Then for any dominant generically finite morphism $\tau:Y'\to Y$ of varieties and for any subvariety $E\subset Y'$ which meets the smooth locus of $Y'$ and which does not dominate $S$ via $f\circ \tau$, we have   $(\tau^\ast f^\ast \alpha)|_E=0\in H^n(k(E),\Z/2)$.
\end{theorem}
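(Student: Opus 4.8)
The plan is to reduce to the case that $E$ is a divisor on $Y$ itself, and then analyze the residue of $\tau^\ast f^\ast\alpha$ along $E$ via the local quadratic form. First I would observe, exactly as in the proof of Proposition~\ref{prop:unramified-coho}, that it suffices to treat the case $E\subset Y'$ a prime divisor: if $E$ has higher codimension, blow up $E$ and pass to the unique exceptional component dominating $E$, which is birational to $E\times\CP^s$; since $H^n(k(E),\Z/2)\hookrightarrow H^n(k(E\times\CP^s),\Z/2)$, this does no harm. Next, using \cite[Proposition 1.7]{merkurjev} to resolve the valuation, I would replace $Y'$ (and hence $Y$, after a further birational modification) so that $\tau(E)$ is a prime divisor $E_0$ on $Y$ with $f(E_0)\subsetneq S$, and reduce the claim to showing that $f^\ast\alpha$ has trivial residue along $E_0$ and restricts to zero on $E_0$. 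Here I use that $f^\ast\alpha$ is unramified, so by \cite[Theorem 4.1.1]{CT} the restriction is defined once the residue vanishes, and by functoriality the vanishing on $E_0$ implies vanishing on $E$.

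The heart of the argument is local. Let $y\in Y$ be the generic point of $E_0$ and $x\in S$ the (codimension one, after a birational modification of $S$ via \cite[Propositions 1.4 and 1.7]{merkurjev} and \cite[Lemma 29]{Sch1}) image point. Write $\hat A=\widehat{\OO_{S,x}}$, $\hat B=\widehat{\OO_{Y,y}}$, with fraction fields $\hat K,\hat L$. Since the generic fibre of $f$ is birational to a quadric $\{q=0\}$ over $k(S)$, there is a sequence
$$
H^n(k(S),\Z/2)\stackrel{\varphi_1}\longrightarrow H^n(\hat K,\Z/2)\stackrel{\varphi_2}\longrightarrow H^n(\hat K(q),\Z/2)\stackrel{\varphi_3}\longrightarrow H^n(\hat L,\Z/2),
$$
and both the residue of $f^\ast\alpha$ at $y$ and (when that residue vanishes) the restriction to $E_0$ factor through $\varphi_3\varphi_2\varphi_1(\alpha)$. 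So it suffices to prove $\varphi_2\varphi_1(\alpha)=0$. For this I would distinguish the two cases used in Proposition~\ref{prop:unramified-coho}: either $\dim f(E_0)=n-c$ where $c=\codim_S f(E_0)$, or $\dim f(E_0)<n-c$. In the second case the residue $\del_x\alpha$ already vanishes because a factor of it is pulled back from the residue field $\kappa(f(y))$, which has cohomological dimension $<n-c$; this gives both vanishings directly without touching the quadric. In the first case I need the hypothesis that $f^\ast\alpha$ is unramified on $Y$ itself: this forces $\alpha$ to become isotropic-compatible with $q$ locally, i.e. $q$ becomes (similar to) a subform of the Pfister form $\langle\langle\alpha\rangle\rangle$ over $\hat K$. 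Concretely, if $\del_y(f^\ast\alpha)\neq0$ then $\del_x\alpha\neq0$, so $\alpha=\del_x(\alpha)$ pairs nontrivially and $q$ cannot kill $\varphi_2\varphi_1(\alpha)$ over $\hat K(q)$ — contradicting that $f^\ast\alpha$ is unramified along $E_0$. Hence $\del_y(f^\ast\alpha)=0$, and then Theorem~\ref{thm:EL} applied over $\hat K$ forces $q$ similar to a subform of $\langle\langle\alpha\rangle\rangle$, whence $\varphi_3\varphi_2\varphi_1(\alpha)=0$ and the restriction to $E_0$ vanishes as well.

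The main obstacle, compared to Proposition~\ref{prop:unramified-coho}, is that there we had explicit control of $q$ (the form \eqref{def:q}) and could verify isotropy of $b$ by Hensel's lemma using condition~\eqref{eq:cond:codim}; here $q$ is an abstract quadric attached to $f$, so I must instead extract the needed local information purely from the hypothesis that $f^\ast\alpha$ is unramified on $Y$. The delicate point is showing that unramifiedness of $f^\ast\alpha$ along $E_0$ (a divisor not dominating $S$) together with $\del_x\alpha$ possibly nonzero still forces the quadric to be a subform of the Pfister form over the completion — i.e. ruling out the possibility that $\alpha$ has nontrivial residue at $x$ that is "absorbed" by $q$ in a way incompatible with $f^\ast\alpha$ being everywhere unramified. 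This is where the combination of Theorem~\ref{thm:EL} (equivalence of $f^\ast\alpha=0$ with $q$ being a subform) and a residue computation must be run carefully; I expect it to go through essentially because an unramified class restricted to a divisorial valuation lands in $H^n_{\text{\'et}}(\Spec\hat B,\Z/2)$, and tracking this through the sequence above pins down $\varphi_2\varphi_1(\alpha)$ as needed.
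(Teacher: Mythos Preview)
There is a genuine gap. Your plan tries to transplant the two-case analysis from Proposition~\ref{prop:unramified-coho}, but that analysis rests on structure that is simply not present here. In Proposition~\ref{prop:unramified-coho} the integer $c$ counts coordinate hyperplanes of $\CP^n$ passing through $f(y)$, and the dichotomy $\dim f(E)=n-c$ versus $\dim f(E)<n-c$ is meaningful precisely because the class $\alpha=(a_1,\dots,a_n)$ is built out of those coordinates. In Theorem~\ref{thm:vanishing} the base $S$ is arbitrary and $\alpha$ is an \emph{arbitrary} class in $H^n(k(S),\Z/2)$, not a symbol; there is no $c$, no Pfister form $\langle\langle\alpha\rangle\rangle$, and Theorem~\ref{thm:EL} does not apply. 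With your definition $c=\codim_S f(E_0)$ the equality $\dim f(E_0)=n-c$ is a tautology, so ``Case~2'' is empty and the whole weight falls on ``Case~1''.

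But Case~1 is where the real gap lies. You assert that unramifiedness of $f^\ast\alpha$ along $E_0$ ``forces $q$ to be similar to a subform of the Pfister form over $\hat K$'', and hence $\varphi_2\varphi_1(\alpha)=0$. Even granting that $\alpha$ is a symbol, this implication is unjustified and is essentially what you are trying to prove: Theorem~\ref{thm:EL} lets you pass from $\varphi_2\varphi_1(\alpha)=0$ to the subform statement, not the other way. Knowing $\partial_y(f^\ast\alpha)=0$ (which is the hypothesis) only tells you that $\varphi_3\varphi_2\varphi_1(\alpha)$ extends across $\Spec\hat B$; it says nothing about its value on the closed point. In Proposition~\ref{prop:unramified-coho} this step worked because Hensel's lemma and the explicit conditions~(\ref{eq:cond:codim}), (\ref{eq:cond:square}) made $b$ a square in $\hat K$, so $q$ was visibly a subform of $\psi$ there. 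You have no substitute for that input.

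The paper's proof takes a different route. Instead of working purely with the form $q$ over completions, it builds a carefully chosen \emph{geometric} model of the quadric fibration over a neighbourhood of $s$ (Lemma~\ref{lem:model}): after explicit blow-ups one obtains a smooth $k$-variety $X\to U$ whose special fibre over $s$ has every reduced component $D_i$ smooth over $\kappa(s)$, and every non-reduced component rational over $\kappa(s)$. For the reduced components the extension of DVRs $\mathcal O_{S,s}\hookrightarrow\mathcal O_{X,x}$ is unramified, so Colliot-Th\'el\`ene's Proposition~\ref{prop:CT} shows $(g^\ast\alpha)|_{D_i}$ comes from $H^n(\kappa(s),\Z/2)=0$. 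For the non-reduced components, rationality over $\kappa(s)$ gives $H^n_{nr}(\kappa(D_i)/\kappa(s),\Z/2)=H^n(\kappa(s),\Z/2)=0$, and one checks the restriction is unramified over $\kappa(s)$. The injectivity property then propagates the vanishing to every subvariety of $X_s$, and hence to $E_0$ via a birational comparison. The construction of the model in Lemma~\ref{lem:model} and the appeal to Proposition~\ref{prop:CT} are the ingredients your argument is missing.
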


In the proof of Theorem \ref{thm:vanishing}, we use the following two results. 

\begin{proposition}[Proposition 8.1 \cite{CT2}]\label{prop:CT}
Let $A\hookrightarrow B$ be a local homomorphism of discrete valuation rings with residue fields $\kappa_A$ and $\kappa_B$ and fraction fields $K:=\Frac(A)$ and $L:=\Frac(B)$. 
Let $f:\Spec B\to \Spec A$ be the corresponding dominant morphism.  

Assume that there is some $\alpha\in H^n(K,\Z/2)$ with $f^\ast \alpha\in H^n_{\text{\'et}}(\Spec B,\Z/2)\subset H^n(L,\Z/2)$.
If $A\hookrightarrow B$ is unramified, then the restriction of $f^\ast \alpha$ to the closed point of $\Spec B$ lies in the image of $f^\ast:H^n(\kappa_A,\Z/2)\to H^n(\kappa_B,\Z/2)$. 
\end{proposition}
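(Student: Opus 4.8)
The plan is to reduce to the case of \emph{complete} discrete valuation rings, where the mod-$2$ Galois cohomology of the fraction field has a transparent description in terms of that of the residue field, and then to track $f^\ast\alpha$ through this description. First I would pass to completions: let $\hat A$ and $\hat B$ be the completions of $A$ and $B$, which are complete discrete valuation rings with the same residue fields $\kappa_A$ and $\kappa_B$, and for which the induced map $\hat A\hookrightarrow \hat B$ is again local and unramified; in particular a uniformizer $\pi$ of $A$ is simultaneously a uniformizer of $B$, of $\hat A$ and of $\hat B$. Restriction to the completed fraction fields $\hat K=\Frac(\hat A)$ and $\hat L=\Frac(\hat B)$ is compatible with the residue maps and with restriction to closed points (this is the functoriality of the localization and purity sequences in \'etale cohomology, cf.\ \cite[\S 3]{CT}), and the image of $f^\ast\alpha$ in $H^n(\hat L,\Z/2)$ still lies in $H^n_{\text{\'et}}(\Spec\hat B,\Z/2)$. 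So it suffices to prove the assertion after replacing $A\hookrightarrow B$ and $\alpha$ by $\hat A\hookrightarrow\hat B$ and the image $\hat\alpha\in H^n(\hat K,\Z/2)$ of $\alpha$.

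Next I would invoke the standard structure theorem for the cohomology of a complete discretely valued field whose residue field has characteristic different from $2$: having fixed the uniformizer $\pi$, there is a direct sum decomposition $H^n(\hat K,\Z/2)=\iota_{\hat K}\bigl(H^n(\kappa_A,\Z/2)\bigr)\oplus (\pi)\cup\iota_{\hat K}\bigl(H^{n-1}(\kappa_A,\Z/2)\bigr)$, where $\iota_{\hat K}$ is the injection $H^\bullet(\kappa_A,\Z/2)=H^\bullet_{\text{\'et}}(\Spec\hat A,\Z/2)\hookrightarrow H^\bullet(\hat K,\Z/2)$, the residue $\partial_{\hat A}$ annihilates the first summand and equals the identity (via $\iota_{\hat K}$) on the $H^{n-1}$-factor of the second, and for a class lying in the first summand restriction to the closed point of $\Spec\hat A$ inverts $\iota_{\hat K}$. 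Accordingly I write $\hat\alpha=\iota_{\hat K}(u)+(\pi)\cup\iota_{\hat K}(v)$ with $u\in H^n(\kappa_A,\Z/2)$ and $v=\partial_{\hat A}(\hat\alpha)\in H^{n-1}(\kappa_A,\Z/2)$.

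Finally I would apply $f^\ast$ and compare residues. Functoriality of \'etale cohomology, applied to the commutative square relating $\Spec\hat B\to\Spec\hat A$ and $\Spec\kappa_B\to\Spec\kappa_A$, gives $f^\ast\circ\iota_{\hat K}=\iota_{\hat L}\circ f^\ast_\kappa$, where $f^\ast_\kappa\colon H^\bullet(\kappa_A,\Z/2)\to H^\bullet(\kappa_B,\Z/2)$ denotes restriction; and $f^\ast(\pi)=(\pi)\in H^1(\hat L,\Z/2)$ precisely because $\hat A\hookrightarrow\hat B$ is unramified. Therefore $f^\ast\hat\alpha=\iota_{\hat L}(f^\ast_\kappa u)+(\pi)\cup\iota_{\hat L}(f^\ast_\kappa v)$, so $\partial_{\hat B}(f^\ast\hat\alpha)=f^\ast_\kappa v$. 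Since $f^\ast\alpha$ is unramified on $B$, its image is unramified on $\hat B$, whence $f^\ast_\kappa v=0$ and $f^\ast\hat\alpha=\iota_{\hat L}(f^\ast_\kappa u)$. Restricting to the closed point of $\Spec\hat B$ then gives $(f^\ast\alpha)|_{\kappa_B}=f^\ast_\kappa u$, which manifestly lies in the image of $f^\ast\colon H^n(\kappa_A,\Z/2)\to H^n(\kappa_B,\Z/2)$, as claimed.

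The argument is essentially formal once the reduction to complete rings is carried out; the steps requiring genuine care are the compatibilities used in the first paragraph (that completion commutes with the residue maps and with restriction to closed points) and the naturality, in the second and third paragraphs, of the decomposition of $H^n(\hat K,\Z/2)$ with respect to the unramified local homomorphism $\hat A\hookrightarrow\hat B$ once a common uniformizer is fixed on both sides. The unramifiedness hypothesis enters exactly twice — to guarantee that $\pi$ is a uniformizer of $B$ and that $f^\ast(\pi)=(\pi)$ — and this is precisely what forces the residue of $f^\ast\hat\alpha$ to equal the pullback of the residue of $\hat\alpha$, which is the crux of the proof.
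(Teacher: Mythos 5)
Your proof is correct; the paper itself gives no proof of this proposition but quotes it from Colliot-Th\'el\`ene \cite{CT2}, and your argument --- pass to the completions, use the uniformizer-dependent splitting $H^n(\hat K,\Z/2)\cong H^n(\kappa_A,\Z/2)\oplus(\pi)\cup H^{n-1}(\kappa_A,\Z/2)$, and exploit that unramifiedness makes $\pi$ a common uniformizer so that residues and the splitting pull back compatibly, forcing the $(\pi)$-component to die and the remaining component to restrict to a class pulled back from $\kappa_A$ --- is precisely the standard argument given there. The only hypothesis you should state explicitly is that the residue fields have characteristic different from $2$ (automatic in the paper's setting), since the decomposition of the cohomology of a complete discretely valued field requires it.
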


\begin{lemma}\label{lem:model}
Let $k$ be an algebraically closed field of characteristic different from $2$.
Let $S$ be a normal variety over $k$ and let $Q$ be a smooth projective quadric over the function field $k(S)$.
Then for any codimension one point $s\in S^{(1)}$, there is an open neighbourhood $U\subset S$ of $s$ and a smooth variety $X$ over $k$ together with a proper morphism $g:X\to U$ whose generic fibre is isomorphic to $Q$ and such that the special fibre $X_s$ of $X$ over $s\in U$ 
has the following property:
for any component $D_i$ of the reduced special fibre $(X_s)^{\red}$, $D_i$ is smooth over $\kappa(s)$, and, if $X_s$ is non-reduced along $D_i$, then $D_i$ is rational over $\kappa(s)$.
\end{lemma}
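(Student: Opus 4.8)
The plan is to reduce everything to a local model over a discrete valuation ring and then to resolve that model by an explicit finite sequence of blow‑ups along smooth, regularly embedded centres, so that the whole argument is insensitive to the characteristic of $k$. Concretely, I would first replace $S$ by $\Spec A$ with $A:=\OO_{S,s}$; since $S$ is normal it is regular in codimension one, so $A$ is a discrete valuation ring with uniformizer $\pi$ and residue field $\kappa(s)$. All of the constructions below are carried out over $A$, i.e.\ inside $\CP^{r+1}_A$ and its blow‑ups, and only at the very end are they spread out: one chooses $U\ni s$ small enough that the coefficients used, the centres of all the blow‑ups, and the relevant smooth loci are defined over $U$, and that $D:=\overline{\{s\}}\cap U$ is smooth — possible because $\Sing(S)$ has codimension $\ge 2$ and hence misses the generic point $s$ of $D$. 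As all properties to be checked are open around $s$, this is harmless.

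Next I would put $Q$ in standard form. Diagonalising the defining form over $K:=\Frac(A)$ and rescaling each coefficient by a square in $K^\ast$, one may write $q\cong q_1\perp\pi q_2$ with $q_1=\langle u_0,\dots,u_m\rangle$, $q_2=\langle u_{m+1},\dots,u_{r+1}\rangle$ and all $u_i\in A^\ast$; since $\langle\lambda\rangle q$ defines the same quadric and $\pi q\cong q_2\perp\pi q_1$ after absorbing a $\pi^2$, I may further assume $n_2:=\dim q_2\le n_1:=\dim q_1$, and then $r\ge1$ forces $n_1\ge2$. Consider the naive model $\mathcal X_0:=\{q=0\}\subset\CP^{r+1}_A$, with its reduction map to $\Spec A$. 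Its special fibre is $\{q_1=0\}$, the cone over the smooth quadric $\bar Q_1:=\{q_1=0\}\subset\CP(V_1)$ with vertex the linear space $\CP(V_2)$; it is reduced because $n_1\ge2$. The Jacobian criterion gives $\Sing(\mathcal X_0)=\bar Q_2:=\{q_2=0\}\subset\CP(V_2)$, a smooth quadric over $\kappa(s)$ which is empty exactly when $n_2\le1$, and along which — after extracting square roots of the units $u_i$ \'etale‑locally, legitimate since $\operatorname{char}k\neq2$ — the variety $\mathcal X_0$ takes the product form $\bar Q_2\times(\text{affine cone over a smooth quadric of dimension }n_1)$.

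Then I would resolve, distinguishing cases on $(n_1,n_2)$. If $n_2=0$ the fibre is already smooth and $X=\mathcal X_0$ works. If $n_2\ge2$, first blow up $\mathcal X_0$ along $\bar Q_2=\Sing(\mathcal X_0)$: by the transversal‑cone description this produces a smooth variety $\mathcal X_1$ whose exceptional divisor $E_A\to\bar Q_2$ is a smooth quadric bundle. The proper transform of the special fibre is then already smooth along each of its components when $n_1=2$ (they are blow‑ups of linear spaces), while when $n_1\ge3$ it is irreducible with singular locus exactly the proper transform of the vertex $\CP(V_2)_D$, which I would blow up in a second step. If instead $n_2=1$, then $\mathcal X_0$ is already smooth, and I would blow up $\mathcal X_0$ along the vertex section $\CP(V_2)_D$ precisely when $n_1\ge3$ (a smooth, regularly embedded centre, since $\pi$ vanishes to order $2$ along it). In every case only smooth, regularly embedded centres in smooth ambient spaces are used, so the resulting total space $X$ is smooth over $k$ and still has generic fibre $Q$.

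Finally I would read off the special fibre $X_s$ of the final model. Computing the order of vanishing of $\pi$ in the charts of the blow‑ups, the components of $X_s$ are: the proper transform of $\{q_1=0\}$, whose components are smooth (a projective bundle over the smooth quadric $\bar Q_1$ when $n_1\ge3$, and blow‑ups of linear spaces when $n_1=2$), appearing with multiplicity one; the exceptional divisor $E_A$ over $\bar Q_2$ (when $n_2\ge2$), a smooth quadric bundle over $\bar Q_2$ and hence smooth, appearing with multiplicity one; and the exceptional divisor over the vertex $\CP(V_2)_D$ (when $n_1\ge3$), a projective bundle over $\CP(V_2)_{\kappa(s)}\cong\CP^{\,n_2-1}_{\kappa(s)}$, hence smooth and rational, appearing with multiplicity two. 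Thus every component of $X_s^{\red}$ is smooth over $\kappa(s)$, and every component along which $X_s$ is non‑reduced is a projective‑space bundle over a projective space, hence rational over $\kappa(s)$; this is exactly the assertion. The main obstacle is the local analysis behind the last two paragraphs: showing that the indicated blow‑ups simultaneously regularise the total space and the components of the fibre when $n_2\ge2$ — where a single blow‑up does not suffice and one must check that the second blow‑up along the proper transform of the vertex preserves smoothness of the total space — and keeping precise track of the multiplicities, since these decide which components have to be shown to be rational.
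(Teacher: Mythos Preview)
Your proposal is correct and follows essentially the same approach as the paper: diagonalise the form into $q_1\perp\pi q_2$, take the naive model in $\CP^{r+1}$, and resolve by first blowing up the singular locus $\bar Q_2=\{q_2=0\}\subset\CP(V_2)$ and then the proper transform of the vertex plane $\CP(V_2)$, reading off that the only non-reduced component is the second exceptional divisor, a projective bundle over $\CP(V_2)_{\kappa(s)}$. The differences are cosmetic: you work over the DVR and spread out at the end (the paper works over $U$ throughout), you make finer case distinctions to skip unnecessary blow-ups when $n_1=2$, and you argue smoothness of the first blow-up via an \'etale-local transversal-cone description where the paper writes down the exceptional divisor explicitly (citing \cite{CTS}).
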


\begin{proof}
We will frequently use that a variety over $k$ is smooth if and only if it is regular, because $k$ is algebraically closed.
For instance, since $S$ is normal, it is regular in codimension one and so it is smooth away from a closed subset of codimension two. 
It follows that there is a smooth neighbourhood $U\subset S$ of $s$ such that the closure $D:=\overline{\{s\}} \subset U$ is smooth and cut out by a single regular function $\pi\in \mathcal O_S(U)$. 
After possibly shrinking $U$, we may additionally assume that there are nowhere vanishing regular functions $c_i\in \mathcal O_S(U)^\ast$ on $U$ and an integer $1\leq m\leq r+1$ such that the generic fibre of the $U$-scheme
$$
\mathcal Q:=\left\lbrace  \sum_{i=0}^m c_iz_i^2 + \pi \sum_{i=m+1}^{r+1}c_iz_i^2=0\right\rbrace  \subset \CP_U^{r+1}
$$
is isomorphic to $Q$.
If $m=r+1$, then $\mathcal Q$ is smooth over $U$ (because $\operatorname{char}(k)\neq 2$).
It follows that $\mathcal Q$ is smooth over $k$ and the fibre $\mathcal Q_s$ of $\mathcal Q$ over $s$ is smooth over the residue field $\kappa:=\kappa(s)$, as we want. 
If $m=r$, then $\mathcal Q$ is smooth over $k$ and blowing-up the closure of the singular point $x$ of the fibre $\mathcal Q_s$ of $\mathcal Q$ above $s$ yields a model which is smooth over $k$ and whose fibre above $s$ is of the form $Bl_{x}\mathcal Q_s + 2\cdot \CP^r_\kappa$.
Since $Bl_x \mathcal Q_s$ and $\CP^r_\kappa$ are smooth over $\kappa$, the lemma holds in this case.

If $m<r$, then $\mathcal Q$ has singular locus 
$$
Z:=\mathcal Q^{\sing}=\left\lbrace z_0=\dots=z_m=\pi=\sum_{i=m+1}^{r+1}c_iz_i^2=0\right\rbrace  .
$$ 
Since the $c_i$ are nowhere vanishing on $U$, 
$Z$ is a smooth (but, if $m=r-1$, possibly reducible) quadric bundle over $D$, which is contained in the trivial $\CP^{r-m}$-bundle
$$
P:=\left\lbrace z_0=\dots=z_m=0 \right\rbrace \subset \CP_D^{r+1} .
$$ 
The blow-up $Q':=Bl_Z\mathcal Q$ is smooth over $k$, because its exceptional (Cartier) divisor $E$ is smooth over $k$, as it is given by 
$$
E:=\left\lbrace \sum_{i=0}^m c_i|_D\cdot z_i^2+tw=0 \right\rbrace  \subset \CP_Z(\mathcal O_Z(1)^{\oplus(m+1)}\oplus \varphi^\ast \mathcal N_{D/U}\oplus \mathcal O_Z(2)) ,
$$
where $c_i|_D$ denotes the restriction of $c_i$ to $D$ and 
 $z_0,\dots ,z_m$ are local coordinates that trivialize $\mathcal O_Z(1)^{\oplus(m+1)}$, $t$ trivializes locally the pullback $ \varphi^\ast \mathcal N_{D/U}$ of the normal bundle of $D$ in $U$ via the natural map $\varphi:Z\to D$ and $w$ trivializes locally $\mathcal O_Z(2)$; cf.\ \cite[Th\'eor\`eme 3.3]{CTS}.
Note that the fibre $E_s$ of $E$ above the generic point $s\in D$ is smooth over $\kappa$, because $Z_s$ is a smooth quadric over $\kappa$ and all fibres of $E_s\to Z_s$ are quadrics of full rank.

The fibre of $\mathcal Q'\to U$ above $s$ is reduced and
given by  $E_s + \widetilde {\mathcal Q_s}$, where $\widetilde {\mathcal Q_s}$  is the blow-up of the 
quadric cone $\mathcal Q_s=\{\sum_{i=0}^m \overline c_i z_i^2=0\}\subset \CP_\kappa^{r+1} $ in $Z_s$.
Here, $Z_s$ denotes the fibre of $Z\to D$ above $s$ and $\overline c_i$ denotes the image of $c_i$ in $\kappa=\kappa(s)$. 
The exceptional divisor of $\widetilde {\mathcal Q_s}\to \mathcal Q_s $ is in the above coordinates given by $E_s\cap\{t=0\}$, where $E_s$ denotes the fibre of $E\to D$ above $s$.
The singular locus of $E_s\cap\{t=0\}$ is given by $z_0=\dots =z_m=0$, i.e.\ by the intersection of $E_s$ with the 
proper transform $\widetilde P_s\subset \widetilde {\mathcal Q_s}$ of the plane $
P_s:=\{z_0=\dots=z_m=0\} \subset \CP_\kappa^{r+1}
$.
This shows that the singular locus of $\widetilde {\mathcal Q_s}$ is given by $\widetilde P_s$
and a similar analyses shows that $Bl_{\widetilde P_s} \widetilde {\mathcal Q_s}$ is smooth over $\kappa$.

Let $\widetilde P\subset \mathcal Q'$ be the proper transform of $P$.
Since $Z\subset P$ is a Cartier divisor on $P$, $\widetilde P\cong P\cong \CP^{r-m}_D$. 
Since $D$ is smooth over $k$, so is $\widetilde P$.

Let 
$
\mathcal Q'':=Bl_{\widetilde P} \mathcal Q' 
$.
Since $\mathcal Q'$ and $\widetilde P$ are smooth over $k$, so is $\mathcal Q''$.
Let $E''$ be the exceptional divisor of $\mathcal Q''\to \mathcal Q'$ and let $E''_s$ be the fibre of $E''\to D$ above $s$.  
Since $\mathcal Q'$ is smooth over $k$ and the center of the blow-up is given by $\widetilde P\cong \CP^{r-m}_D$, which is a trivial $\CP^{r-m}$-bundle over $D$, we find that $E''_s$ is a Zariski locally trivial $\CP^m$-bundle over $\CP^{r-m}_\kappa$ and so it is smooth and rational over $\kappa$. 
The fibre $\mathcal Q''_s$ of $\mathcal Q''$ over $s$ is reduced along all components apart from $E''_s$, where the multiplicity is two:  
$
\mathcal Q''_s= 2\cdot E''_s+ Bl_{\widetilde P_s\cap E_s}E_s + Bl_{\widetilde P_s} \widetilde {\mathcal Q_s} 
$. 
Since $E_s$ is smooth over $\kappa$ and $\widetilde P_s\cap E_s\cong Z_s$ is a smooth quadric, $Bl_{\widetilde P_s\cap E_s}E_s$ is smooth over $\kappa$. 
As noted above, 
$
Bl_{\widetilde P_s} \widetilde {\mathcal Q_s}
$
is smooth over $\kappa$ as well.
This shows that $X:=\mathcal Q''$ satisfies the conclusion of the lemma, as we want.
\end{proof}

\begin{proof}[Proof of Theorem \ref{thm:vanishing}]
As in the proof of Proposition \ref{prop:unramified-coho}, one reduces (after replacing $Y'$, $Y$ and $S$ by different birational models) to the case where $Y$ and $S$ are normal and $E$ is a divisor on $Y'$ that maps to divisors on $Y$ and $S$.
By functoriality, it thus suffices to prove that $f^\ast \alpha$ restricts to zero on the function field of a given divisor $E\subset Y$ whose generic point maps to a codimension one point $s\in S^{(1)}$.

By assumptions, the generic fibre of $f:Y\to S$ is birational to a smooth quadric $Q$. 
Applying Lemma \ref{lem:model} to the codimension one point $s\in S^{(1)}$, we get an open neighbourhood $U\subset S$ of $s$, a smooth $k$-variety $X$ and a proper morphism $g:X\to U$ whose generic fibre is isomorphic to $Q$. 
Moreover, for any component $D_i$ of the reduced special fibre $(X_s)^{\red}$, $D_i$ is smooth over $\kappa:=\kappa(s)$, and, if $X_s$ is non-reduced along $D_i$, then $D_i$ is rational over $\kappa$. 

We fix some component $D_i$ of $(X_s)^{\red}$ and denote by $x$ its generic point.
We may think about $x$ as a codimension one point on $X$: $x\in X^{(1)}$.
Since the $k$-varieties $X$ and $Y$ are birational (over $S$), the class $g^\ast \alpha \in H^n(k(X),\Z/2)$ is unramified over $k$ by assumptions and so we can restrict $g^\ast \alpha$ to the generic point $x$ of $D_i$.
As before, by slight abuse of notation, we denote this restriction by 
\begin{align} \label{eq:g*alpha|Di}
(g^\ast \alpha)|_{D_i} \in H^n(\kappa(D_i),\Z/2) .
\end{align}
Since $k$ is algebraically closed, $\kappa=\kappa(s)$ has cohomological dimension less than $n$.
We claim that this implies that the above restriction vanishes. 

To prove this claim, let us first deal with the case where $X_s$ is reduced along $D_i$.
We consider the discrete valuation rings $B:=\mathcal O_{X,x}$ and $A:=\mathcal O_{S,s}$.
The morphism $g:X\to S$ induces a local homomorphism $A\hookrightarrow B$, which is unramified because $X_s$ is reduced along $D_i$.
Since $g^\ast \alpha \in H^n(k(X),\Z/2)$ is unramified over $k$, $g^\ast \alpha \in H^n_{\text{\'et}}(\Spec B,\Z/2)$.
Therefore, Proposition \ref{prop:CT} shows that the restriction $(g^\ast \alpha) |_{D_i}$ lies in the image of $g^\ast: H^n(\kappa(s),\Z/2)\to H^n(\kappa(x),\Z/2)$ and so it must vanish because $H^n(\kappa(s),\Z/2)=0$.

Next, we deal with the case where $X_s$ is not reduced along $D_i$.
In this case, $D_i$ is rational over $\kappa$ and so
$$
H^n_{nr}(\kappa(D_i)/\kappa,\Z/2)=H^n(\kappa,\Z/2).
$$
The above right hand side vanishes because $\kappa=\kappa(s)$ has cohomological dimension less than $n$.
To conclude, it thus suffices to see that $(g^\ast \alpha)|_{D_i} \in H^n(\kappa(D_i),\Z/2)$ is unramified over $\kappa$.
To see this, note that $X$ is smooth and integral over $k$.
Since $g^\ast \alpha$ is unramified over $k$, the equivalence of (a) and (b) in \cite[Theorem 4.1.1]{CT} shows that for any scheme point $p\in X$, the class $g^\ast \alpha\in H^n(k(X),\/Z/2)$ comes from a class in $H^n_{\text{\'et}}(\Spec \mathcal O_{X,p},\Z/2)$.
Using functoriality of \'etale cohomology, we conclude that for any scheme point $p\in D_i$, the restriction $(g^\ast \alpha)|_{D_i}\in H^n(\kappa(D_i),\Z/2)$ comes from a class in $H^n_{\text{\'et}}(\Spec \mathcal O_{D_i,p} , \Z/2)$.
The equivalence of (b) and (d) in \cite[Theorem 4.1.1]{CT} applied to the smooth and proper variety $D_i$ over $\kappa$ then shows that $(g^\ast \alpha)|_{D_i}$ is unramified over $\kappa$, as we want.
Altogether, we have thus proven that the class in (\ref{eq:g*alpha|Di}) vanishes.

Up to replacing $Y$ by another normal model, we may assume that the base change $Y_U$ admits a proper birational morphism $Y_U\to X$ over $U$.
In order to prove that $f^\ast \alpha$ vanishes at the generic point of $E\subset Y_U$, it thus suffices to prove that $g^\ast \alpha$ restricts to zero on the generic point of any subvariety $W\subset X$ which lies over the codimension one point $s\in U$.
(That restriction is defined by \cite[Theorem 4.1.1(b)]{CT} because $g^\ast \alpha$ is unramified over $k$ and $X$ is smooth over $k$.)
We have proven this already if $W$ coincides with one of the components of $(X_{s})^{\red}$.
The general case follows then from the injectivity property (see e.g.\ \cite[Theorem 3.8.1]{CT}), because the components of $(X_{s})^{\red}$ are smooth over $\kappa(s)$.
This concludes the proof of the theorem. 
\end{proof}

\appendix

\section{Explicit examples} \label{app:Examples}

In this section we give in any dimension $N$, and in all degrees $d$ covered by Theorem \ref{thm:main:intro}, explicit examples of smooth stably irrational hypersurfaces over countable fields, such as $\Q(t)$ or $\F_p(s,t)$. 
If $N$ is of the special form $N=2^n+n-2$, we produce similar examples over smaller fields, such as $\Q$ or $\F_p(s)$. 
This proves in particular the claims made about the explicit equation in
Section \ref{subsec:equations}.

Let $N\geq 3$ be an integer and write $N=n+r$ with integers $n\geq 2$ and $r\geq 1$ with  $2^{n-1}-2\leq r\leq 2^n-2$.
Let further $d\geq n+2$ be an integer, and let $k$ be a field.

For any $\epsilon \in \{0,1\}^n$, we put $|\epsilon|:=\sum_{i=1}^n \epsilon_i$ and define a bijection $\rho:\{0,1\}^n\to \{1,\dots ,2^n\}$ via $\rho(\epsilon):=1+\sum_{i=1}^{n} (1- \epsilon_i)\cdot 2^ {i-1}$. 
Set $S:=\rho^{-1}(\{1,2,\dots ,r+1\})$ and consider the following homogeneous degree $d$  polynomials:
$$
R_{ev}:=-x_0^{d-n}x_1x_2\cdots x_n+\sum_{\epsilon \in S} 
 x_0^{d-2-|\epsilon|}\cdot x_1^{\epsilon_1}\cdots x_n^{\epsilon_n} \cdot  x^2_{n+\rho(\epsilon)} ,
$$
$$
R_{odd}:=-x_0^{d-n+1}x_2\cdots x_n+\sum_{\epsilon \in S} 
 x_0^{d-2-|\epsilon|+(-1)^{1-\epsilon_1}}\cdot x_1^{1-\epsilon_1}x_2^{\epsilon_2}\cdots x_n^{\epsilon_n} \cdot  x^2_{n+\rho(\epsilon)} ,
$$
$$
R':=\begin{cases}
\sum_{i=0}^{N+1}x_i^d\ \ \text{if $\operatorname{char}(k)\nmid d$;}\\
x_0^d+\sum_{i=0}^{N}x_ix_{i+1}^{d-1}\ \ \text{if $\operatorname{char}(k)\mid d$}.
\end{cases}
$$ 

\begin{example} \label{ex:equation}
Let $t,u,v\in k$.
We consider the homogeneous polynomial $F\in k[x_0,\dots ,x_{N+1}]$, given as follows, where $R_{ev}$, $R_{odd}$ and $R'$ are as above.
\begin{itemize}
\item If $d\geq n+2$ is even,  
\begin{align*}
F:=u\cdot \left(t^2\left( \sum_{i=0}^n x_i^{d/2}\right)^2 +R_{ev}\right)
 + v\cdot R' .
\end{align*} 
\item  If $d\geq n+2$ is odd,  
\begin{align*}
F:=u\cdot \left(t^2x_1\left( \sum_{i=0}^n x_i^{\frac{d-1}{2}}\right)^2 +R_{odd}\right) 
 + v\cdot R' .
\end{align*}
\end{itemize}
\end{example}

\begin{theorem} \label{thm:main:explicit}
Let $N\geq 3$, write $N=n+r$ and let $d\geq n+2$ be integers as above.
Let $k$ be a field with $\operatorname{char}(k)\neq 2$ and of transcendence degree at least one if $\operatorname{char}(k)=0$ and two otherwise.
We specialize the elements $t,u,v\in k$, used in Example \ref{ex:equation}, as follows:
\begin{itemize}
\item If $\operatorname{char}(k)=0$, let $t\in k$ be an element that is transcendental over the prime field of $k$ and let $u,v\in \Z$ be different prime numbers such that $v\nmid d$.
\item If $\operatorname{char}(k)=p\geq 3$, let $s,t\in k$ be elements that are algebraically independent over the prime field $\F_p\subset k$, and let $u,v\in \F_p[s]$ be coprime irreducible polynomials. 
\end{itemize}
Then $X:=\{F=0\}\subset \CP^{N+1}_k$, where $F$ is as in Example \ref{ex:equation}, is smooth and stably irrational over $\overline k$.
\end{theorem}
\begin{proof}
Setting $u= 0$, we we see that $X$ degenerates to $\{R'=0\}$, which is a smooth hypersuface by construction.
Hence, $X$ is smooth.
Let $Z$ be the specialization of $X$, given by $v=0$.
As in the proof of Theorem \ref{thm:main}, Propositions \ref{prop:unramified-coho}, \ref{prop:cohononvanishing:Constr3} and \ref{prop:hypersurface} imply that this hypersurface satisfies the assumptions of Proposition \ref{prop:degeneration}.
(This requires to rename the coordinates $y_1,\dots ,y_{r+1}$ used in Section \ref{sec:hypersurface} by $x_{n+1},\dots ,x_{n+r+1}$.) 
Hence, we conclude that $X$ is not stably rational over $\overline k$, as we want.
\end{proof}

\begin{theorem} \label{thm:slope:explicit}
In the above notation, assume that $r=2^n-2$, i.e.\ $N=2^n+n-2$ and let $d\geq n+2$ be an integer.
Let $k$ be a field with $\operatorname{char}(k)\neq 2$ and of transcendence degree at least one if $\operatorname{char}(k)>0$.
We specialize the elements $t,u,v\in k$, used in Example \ref{ex:equation}, as follows:
\begin{itemize}
\item If $\operatorname{char}(k)=0$, set $t:=1$ and let $u,v\in \Z$ be different primes such that $v\nmid d$.
\item If $\operatorname{char}(k)=p\geq 3$,  let $s\in k$ be transcendental over the prime field $\F_p\subset k$, set $t:=1$ and let $u,v\in \F_p[s]$ be coprime irreducible polynomials. 
\end{itemize}
Then $X:=\{F=0\}\subset \CP^{N+1}_k$, where $F$ is as in Example \ref{ex:equation}, is smooth and stably irrational over $\overline k$.
\end{theorem}

\begin{proof}
Setting $u= 0$, we we see that $X$ degenerates to $\{R'=0\}$, which is a smooth hypersuface by construction.
Hence, $X$ is smooth.
Let $Z$ be the specialization of $X$, given by $v=0$.
As in the proof of Theorem \ref{thm:Q}, Lemma \ref{lem:bnotsquare} and Propositions \ref{prop:unramified-coho} and \ref{prop:hypersurface} imply that this hypersurface satisfies the assumptions of Proposition \ref{prop:degeneration}.
(This requires to rename the coordinates $y_1,\dots ,y_{2^n-1}$ used in Section \ref{sec:hypersurface} by $x_{n+1},\dots ,x_{n+2^n-1}$.) 
Hence, we conclude that $X$ is not stably rational over $\overline k$, as we want.
\end{proof}

\section*{Acknowledgements}   
I am grateful to J.-L.\ Colliot-Th\'el\`ene, B.\ Conrad, B.\ Totaro and to the excellent referees, for many useful comments and suggestions. 
I had useful discussions about topics related to this paper with  O.\ Benoist
and L.\ Tasin. 



\begin{thebibliography}{HKLR} 

%

\bibitem[AM]{artin-mumford}
M.\ Artin and D.\ Mumford, {\em Some elementary examples of unirational varieties which are not rational}, Proc.\ London Math.\ Soc.\ (3) \textbf{25} (1972),75--95.

\bibitem[Aso]{asok}
A.\ Asok, {\em Rationality problems and conjectures
of Milnor and Bloch-Kato}, Compos.\ Math.\ \textbf{149} (2013), 1312--1326.

 
%

 

\bibitem[Bea]{bea}
A.\ Beauville, {\em A very general sextic double solid is not stably rational}, Bull.\ London Math.\ Soc.\ \textbf{48} (2016), 312--324. 

%

\bibitem[CG]{clemens-griffiths}
C.H.\ Clemens and P.A.\ Griffiths, {\em The intermediate Jacobian of the cubic threefold}, Ann.\ Math.\ \textbf{95} (1972), 281--356.


\bibitem[CT1]{CT}
J.-L.\ Colliot-Th\'el\`ene, {\em Birational invariants, purity and the Gersten conjecture}, K-theory and algebraic geometry: connections with quadratic forms and division algebras (Santa Barbara, CA, 1992), 1--64, Proc.\ Sympos.\ Pure Math.\, \textbf{58}, AMA, Providence,RI, 1995.


\bibitem[CT2]{CT2}
J.-L.\ Colliot-Th\'el\`ene, {\em Introduction to work of Hassett-Pirutka-Tschinkel and Schreieder}, Preprint 2018, https://www.math.u-psud.fr/~colliot/HPT+Srevisited2Feb18.pdf.




\bibitem[CTO]{CTO}
J.-L.\ Colliot-Th\'el\`ene and M.\ Ojanguren, {\em Vari\'et\'es unirationnelles non rationnelles : au-del\`a de l'exemple d'Artin et Mumford}, Invent.\ Math.\ \textbf{97} (1989), 141--158.

\bibitem[CTS]{CTS}
J.-L.\ Colliot-Th\'el\`ene and A.N.\ Skorobogatov, {\em 
Groupe de Chow des z\'ero-cycles sur les fibr\'es en quadriques}, K-Theory \textbf{7} 1993, 477--500.


\bibitem[CTP1]{CT-Pirutka}
J.-L.\ Colliot-Th\'el\`ene and A.\ Pirutka, {\em Hypersurfaces quartiques de dimension 3 \ : non rationalit\'e stable},  Annales Sc.\ \'Ec.\ Norm.\ Sup.\ \textbf{49} (2016), 371--397.

\bibitem[CTP2]{CTP2}
J.-L.\ Colliot-Th\'el\`ene and A.\ Pirutka, {\em Rev\^etements cycliques qui ne sont pas stablement rationnels}, Izvestiya RAN, Ser.\ Math.\ \textbf{80} (2016), 35--47. 
(English translation: arXiv:1506.00420v2.)

\bibitem[CTV]{CTV}
J.-L.\ Colliot-Th\'el\`ene and C.\ Voisin, {\em Cohomologie non ramifi\'ee et conjecture de Hodge enti\`ere}, Duke Math.\ J.\ \textbf{161} (2012), 735--801.

\bibitem[CMM]{CMM}
A.\ Conte, M.\ Marchisio and J.P.\ Murre, {\em On the unirationality of the quintic hypersurface containing a 3-dimensional linear space},  Atti Accad.\ Sci.\ Torino Cl.\ Sci.\ Fis.\ Mat.\ Natur.\ 142 (2008), 89--96. 

\bibitem[deF1]{deF1}
T.\ de Fernex, {\em Birationally rigid hypersurfaces},  Invent.\ Math.\ \textbf{192} (2013), 533--566. 

\bibitem[deF2]{deF2}
T.\ de Fernex, {\em Erratum to: Birationally rigid hypersurfaces},  Invent.\ Math. 
\textbf{203} (2016), 675--680.

\bibitem[deJo]{deJo}
A.J.\ de Jong, {\em Smoothness, semi-stability and alterations}, Publ.\ Math.\ de l'IH\'ES \textbf{83} (1996), 51--93.

 
\bibitem[EL]{EL} 
 R.\ Elman and T.Y.\ Lam, {\em Pfister Forms and K-Theory of Fields}, J.\ Algebra \textbf{23} (1972), 181--213.
 
\bibitem[FT]{FT}
L.\ Fu and Z.\ Tian, {\em 2-cycles sur les hypersurfaces cubiques de dimension 5}, arXiv:1801.03995.
 
\bibitem[Ful]{fulton}
W.\ Fulton, {\em Intersection theory}, Springer--Verlag, 1998.


\bibitem[HKT]{HKT}
B.\ Hassett, A.\ Kresch and Yu.\ Tschinkel, {\em Stable rationality and conic bundles}, Math.\ Ann.\ \textbf{365} (2016), 1201--1217.

\bibitem[HPT1]{HPT}
B.\ Hassett, A.\ Pirutka and Yu.\ Tschinkel, {\em Stable rationality of quadric surface bundles over surfaces}, arXiv:1603.09262, to appear in Acta Math.\

\bibitem[HPT2]{HPT2}
B.\ Hassett, A.\ Pirutka and Yu.\ Tschinkel, {\em A very general quartic double fourfold is not stably rational}, arXiv:1605.03220, to appear in Algebraic Geometry.



\bibitem[IT]{IT}
L.\ Illusie and M.\ Temkin,
{\em Expos\'e X. Gabber's modification theorem (log smooth case)}, Ast\'erisque (2014),
no.\ 363--364, 167--212, Travaux de Gabber sur l'uniformisation locale et la cohomologie
\'etale des sch\'emas quasi-excellents.


\bibitem[IM]{IM}
V.A.\ Iskovskikh and Y.I.\ Manin, {\em Three-dimensional quartics and counterexamples to the L\"uroth problem}, Mat.\ Sb.\ \textbf{86} (1971), 140--166. Eng.\ trans., Math.\ Sb.\ \textbf{15} (1972), 141--166.
 

\bibitem[KM]{karpenko-merkurjev}
N.A.\ Karpenko and A.S.\ Merkurjev, {\em On standard norm varieties},  Annales Sc.\ \'Ec.\ Norm.\ Sup.\ \textbf{46} (2013), 177--216.

\bibitem[Ke]{kerz}
M.\ Kerz, {\em The Gersten conjecture for Milnor K-theory}, Inv.\ Math.\ \textbf{175} (2009), 1--33.

 

\bibitem[Ko1]{kollar}
J.\ Koll\'ar, {\em Nonrational hypersurfaces}, J.\ Amer.\ Math.\ Soc.\ \textbf{8} (1995), 241--249. 

\bibitem[Ko2]{kollar2}
J.\ Koll\'ar, {\em Rational curves on algebraic varieties}, Ergebnisse der Mathematik und ihrer Grenzgebiete. 3. Folge. A Series of Modern Surveys in Mathematics, 32. Springer--Verlag, Berlin,  1996.

\bibitem[KT]{KT}
M.\ Kontsevich and Yu.\ Tschinkel, {\em Specialization of birational types}, arXiv:1708.05699.

%


\bibitem[Lam]{lam2}
T.Y.\ Lam, {\em Introduction to quadratic forms over fields}, Graduate Studies in Mathematics \textbf{67}, AMS, Providence, Rhode Island, 2005. 



\bibitem[Mer]{merkurjev}
A.S.\ Merkurjev, {\em Unramified elements in cycle modules}, J.\ London Math.\ Soc.\ \textbf{78} (2008), 51--64.




\bibitem[Mur]{Mur}
J.P.\ Murre, {\em Reduction of the proof of the non-rationality of a non-singular cubic
threefold to a result of Mumford}, Comp.\ Math.\
\textbf{27} (1973), 63--8.

\bibitem[NS]{NS}
J.\ Nicaise and E.\ Shinder, {\em The motivic nearby fiber and
degeneration of stable rationality}, arXiv:1708.027901.



\bibitem[Oka]{oka}
T.\ Okada, {\em Stable rationality of cyclic covers of projective spaces}, arXiv:1604.08417.

\bibitem[Pu1]{Pu1}
A.V.\ Pukhlikov, {\em Birational isomorphisms of four-dimensional quintics},  Invent.\ Math.\ \textbf{87} (1987), no. 2, 303--329.

\bibitem[Pu2]{Pu2}
A.V.\ Pukhlikov, {\em Birational automorphisms of Fano hypersurfaces},  Invent.\ Math.\ \textbf{134} (1998), no. 2, 401--426.
 
%

 


\bibitem[Se]{serre}
J.-P.\ Serre, {\em Galois cohomology}, Springer--Verlag, Berlin, 1997. 


\bibitem[Sch1]{Sch1}
S.\ Schreieder, {\em On the rationality problem for quadric bundles}, Duke Math.\ Journal \textbf{168} (2019), 187--223.

\bibitem[Sch2]{Sch2}
S.\ Schreieder, {\em Quadric surface bundles over surfaces and stable rationality},  Algebra \& Number Theory \textbf{12} (2018), 479--490.



\bibitem[To]{totaro}
B.\ Totaro, {\em Hypersurfaces that are not stably rational}, J.\ Amer.\ Math.\ Soc.\ \textbf{29} (2016), 883--891. 
 


\bibitem[Voe]{Voe}
V.\ Voevodsky, {\em Motivic cohomology with $\Z/2$-coefficients}, Publ.\ Math.\ IHES \textbf{98} (2003), 59--104.


\bibitem[Voi1]{voisin-integral-hodge}
C.\ Voisin, {\em On integral Hodge classes on uniruled or Calabi-Yau threefolds}, in \emph{Moduli Spaces and Arithmetic Geometry}, Advanced Studies in Pure Mathematics \textbf{45} (2006), 43--73.

\bibitem[Voi2]{voisin-takagi}
C.\ Voisin, {\em Some aspects of the Hodge conjecture}, Japan.\ J.\ Math.\ \textbf{2} (2007) 261--296.  

\bibitem[Voi3]{voisin-JAG}
C.\ Voisin, {\em Abel-Jacobi map, integral Hodge classes and decomposition of the diagonal.} J.\ Algebr.\ Geom.\ \textbf{22} (2013), 141--174.

\bibitem[Voi4]{voisin}
C.\ Voisin, {\em Unirational threefolds with no universal codimension 2 cycle}, Invent.\ Math.\ \textbf{201} (2015), 207--237.



\end{thebibliography}
\end{document}